\numberwithin{equation}{section}
\renewcommand{\d}{\mathrm{d}}
\renewcommand{\phi}{\varphi}
\renewcommand{\rho}{\varrho}
\newcommand{\0}{\mathbf{0}}
\newcommand{\PP}{\mathbb{P}}
\newcommand{\AAA}{\mathbb{A}}
\newcommand{\A}{\mathbf{A}}
\newcommand{\FF}{\mathbb{F}}
\newcommand{\ZZ}{\mathbb{Z}}
\newcommand{\NN}{\mathbb{N}}
\newcommand{\QQ}{\mathbb{Q}}
\newcommand{\QQbar}{{\overline{\QQ}}}
\newcommand{\RR}{\mathbb{R}}
\newcommand{\CC}{\mathbb{C}}
\newcommand{\cA}{\mathcal{A}}
\newcommand{\cB}{\mathcal{B}}
\newcommand{\cO}{\mathcal{O}}
\newcommand{\cL}{\mathcal{L}}
\newcommand{\cK}{\mathcal{K}}
\newcommand{\Gal}{{\rm Gal}}
\newcommand{\cir}{\mathrm{circle}}
\newcommand{\BU}{\mathrm{BU}}
\newcommand{\expm}{\mathrm{exp}}
\newcommand{\reg}{\mathrm{reg}}
\newcommand{\sing}{\mathrm{sing}}
\renewcommand{\leq}{\leqslant}
\renewcommand{\le}{\leqslant}
\renewcommand{\geq}{\geqslant}
\renewcommand{\ge}{\geqslant}
\renewcommand{\bar}{\overline}
\newcommand{\x}{\mathbf{x}}
\renewcommand{\c}{\mathbf{c}}
\renewcommand{\b}{\mathbf{b}}
\newcommand{\singA}{\mathbf{A}}
\newcommand{\fo}{\mathfrak{o}}
\newcommand{\ve}{\varepsilon}
\DeclareMathOperator{\disc}{disc}
\DeclareMathOperator{\Eff}{Eff}
\DeclareMathOperator{\im}{im}
\DeclareMathOperator{\Pic}{Pic}
\DeclareMathOperator{\vol}{vol}
\DeclareMathOperator{\supp}{supp}
\DeclareMathOperator{\Fr}{Fr}
\DeclareMathOperator{\Res}{Res}
\DeclareMathOperator{\CH}{CH}
\DeclareMathOperator{\Br}{Br}
\DeclareMathOperator{\rk}{rk}
\DeclareMathOperator{\Spec}{Spec}
\newcommand{\1}{\mathbf{1}}
\newtheorem{theorem}{Theorem}[section]
\newtheorem{lemma}[theorem]{Lemma}
\newtheorem{conjecture}[theorem]{Conjecture}
\newtheorem{proposition}[theorem]{Proposition}
\newtheorem{heuristic}[theorem]{Heuristic}
\theoremstyle{definition}
\newtheorem{remark}[theorem]{Remark}
\newtheorem{example}[theorem]{Example}
\newtheorem{hyp}[theorem]{Hypothesis}
\newcommand{\tX}{\tilde{X}}
\newcommand{\tU}{\tilde{U}}
\newcommand{\tD}{\tilde{D}}
\newcommand{\norm}[1]{\left\lVert#1\right\rVert}
\newcommand{\abs}[1]{\left\lvert#1\right\rvert}
\newcommand{\cOPt}{\mathcal{O}_{\mathbb{P}^3}}
\newcommand*\diff{\mathop{}\!\mathrm{d}} 
\newcommand{\eps}{\varepsilon}
\newcommand{\bA}{\mathbb{A}}
\newcommand{\mX}{\mathfrak{X}}
\newcommand{\mU}{\mathfrak{U}}
\newcommand{\mtD}{\tilde{\mathfrak{D}}}
\newcommand{\mtX}{\tilde{\mathfrak{X}}}
\newcommand{\mtU}{\tilde{\mathfrak{U}}}
\begin{document}

\begin{abstract}
We develop a  heuristic for the density of integer points on affine cubic surfaces. Our heuristic applies to smooth  surfaces defined by  cubic polynomials that are
log K3, but it can also be adjusted to handle singular cubic surfaces. We compare our heuristic to 
Heath-Brown's prediction for sums of three cubes, as well as to   asymptotic formulae in the literature around Zagier's work on the Markoff cubic surface, and work of 
Baragar and Umeda on further surfaces of Markoff-type. 
We also test our heuristic against  numerical data for several families of cubic surfaces.
\end{abstract}

\subjclass[2010]{11G35 (11D25, 11G50, 14G12)}

\date{\today}

\title[Integral points on cubic surfaces]{Integral points on cubic surfaces:\\  heuristics and numerics}

\author{Tim Browning}
\address{IST Austria\\
Am Campus 1\\
3400 Klosterneuburg\\
Austria}
\email{tdb@ist.ac.at}

\author{Florian Wilsch}
\address{G\"ottingen University\\
Bunsenstraße 3--5\\
37073 G\"ottingen\\
Germany}
\email{florian.wilsch@mathematik.uni-goettingen.de}

\maketitle

\thispagestyle{empty}
\setcounter{tocdepth}{1}
{\small
\begin{multicols}{2}
\tableofcontents
\end{multicols}
}

\section{Introduction}

Let $U\subset \AAA^3$ be a cubic surface defined by an irreducible polynomial $f\in \ZZ[x,y,z]$ of degree $3$, such that the surface is smooth over $\QQ$.
This paper develops  heuristics for the expected asymptotic behaviour of the counting function
\[
N_U(B)=\#\left\{ (x,y,z)\in \ZZ^3: \max\{|x|,|y|,|z|\}\leq  B, ~f(x,y,z)=0\right\},
\]
as $B\to \infty$. 
A well-studied  example is the cubic polynomial
\begin{equation}\label{3cubes}
  f(x,y,z)=x^3+y^3+z^3-k,
\end{equation}
for  non-zero $k\in \ZZ$. When $k$ is cube-free, it has been conjectured by
Heath-Brown~\cite{33}  that $N_U(B)\sim c_k\log B$ for an appropriate constant $c_k$, which is positive if and only if $k\not\equiv \pm 4 \bmod{9}$. 
Thus, 
in this example, 
 $U(\ZZ)$ is expected to be infinite if and only if $k\not\equiv \pm 4 \bmod{9}$. 
For some values of $k$, this follows from the presence of parametric solutions. 
When $k=1$, for example, the parameterisation
\begin{equation}\label{eq:k=1}
(9t^4)^3+(3t-9t^4)^3+(1-9t^3)^3=1
\end{equation}
was discovered by Mahler \cite{mahler} in 1936. In 1956, Lehmer \cite{lehmer}  discovered an infinite family of parameterisations for the case $k=1$.
In general, however, even showing that 
$U(\ZZ)$ is non-empty has proved very challenging. Indeed,  for the cases $k=33$ and $k=42$, this has only recently been established by  Booker~\cite{booker} and Booker--Sutherland~\cite{booker'}, respectively. In fact, Booker and Sutherland \cite[Sec.~2A]{booker'} also  provide experimental evidence for Heath-Brown's conjecture by comparing $\sum_k N_{U}(B)$ with $\sum_k c_k \log B$, where the sum  runs over  cube-free integers $k\in [ 3, 1000]$ and $B$ runs over the interval $ [10^{7.5},10^{15}]$.

In some cases, the cubic surface admits a group action that renders an analysis of $N_U(B)$ more tractable.
When $\mathrm{N}_{K/\QQ}(x,y,z)$ is the norm form associated to a cubic extension $K/\QQ$, the proof of Dirichlet's unit theorem allows one to study the counting function for the 
polynomial $
f(x,y,z)=\mathrm{N}_{K/\QQ}(x,y,z)-k$,
for any non-zero $k\in \ZZ$.  Assuming that $U(\ZZ)\neq \emptyset$, it follows 
from \cite[Sec.~5]{wei}
that 
\begin{equation}\label{eq:lgw}
N_U(B)\sim c_k (\log B)^{r-1},
\end{equation}
for a suitable constant $c_k>0$, where $r$ is the number of infinite places in $K$. 

The Markoff surface $U\subset \AAA^3$ is defined by  
the polynomial
\begin{equation}\label{eq:markoff}
f(x,y,z)=x^2+y^2+z^2-3xyz.
\end{equation}
It follows from work of Zagier~\cite{zagier} that there exists a constant $c>0$ such that 
$N_U(B)\sim c (\log B)^2$, as $B\to \infty$. 
For given $k\in \ZZ$, the arithmetic of the surfaces 
\begin{equation}\label{eq:GS}
x^2+y^2+z^2-xyz=k
\end{equation}
 has been   investigated deeply by 
Ghosh and Sarnak~\cite{sarnak}, who raise interesting questions about failures of the integral Hasse principle. In particular, it follows from~\cite[Thm.~1.2]{sarnak} that the integral Hasse principle fails for at least $\sqrt{K}(\log K)^{-\frac{1}{4}}$ integer coefficients $|k|\leq K$.
These observations have been refined and  put into the context of the Brauer--Manin obstruction by Loughran--Mitankin~\cite{LM} and 
Colliot-Th\'el\`ene--Wei--Xu~\cite{colliot}.  In particular, the numerical evidence presented in~\cite[Conj.~10.2]{sarnak} for the density of Hasse failures is not wholly  accounted for by the Brauer group. 
For the surfaces  \eqref{eq:GS}, 
an asymptotic formula of the shape  $N_U(B)\sim c_{\text{GS}}(\log B)^2$ can  be deduced by 
taking $n=4$ and $a=1$ in recent work by Gamburd, Magee and Ronan \cite[Thm.~3]{gamburd}. 
These surfaces are smooth when $k\not \in \{0,4\}$, providing many  examples to compare with our heuristic. 

The Markoff surface defined by~\eqref{eq:markoff} is singular and  only fits into the scope of our heuristic after passing to a minimal desingularisation (as  explained  
in Section~\ref{s:markoff}). However, Baragar and Umeda~\cite{baragar} have shown how to adapt Zagier's argument to study $N_U(B)$ for surfaces $U\subset \AAA^3$ defined by the polynomial
\begin{equation}\label{eq:bu-equation}
f(x,y,z)=ax^2+by^2+cz^2-dxyz-1,
\end{equation}
for  $a,b,c,d\in \NN$ such that 
$4abc-d^2\neq 0$ and
such that $d$ is divisible by $a$, $b$ and $c$. 
This surface is smooth over $\QQ$. Moreover, $U$ admits three non-commuting involutions defined over $\ZZ$, which are the so-called {\em Vieta involutions}. The induced action by
the free product
 $(\ZZ/2\ZZ)*(\ZZ/2\ZZ)*(\ZZ/2\ZZ)$ has finitely many orbits and so, as for \eqref{eq:markoff},  this  can  be used to study the set $U(\ZZ)$ of integral points.
 The surfaces \eqref{eq:bu-equation} generalise cubic  surfaces considered by Mordell~\cite{mordell}, and  were first studied by 
Jin and Schmidt~\cite{jin}, who show $U(\ZZ)\neq \emptyset$ 
if and only if $f$ is one of seven possibilities (up to permutation of the coefficients), with one of them being given by 
\begin{equation}\label{eq:many}
f(x,y,z)=x^2+by^2+bz^2-2bxyz-1,
\end{equation}
for any  $b\in \NN$. This case is ignored, however, since the surface contains the line $x-1=y-z=0$, 
which contributes at least 
 $2B$ points  to $N_U(B)$.
Baragar and Umeda~\cite[Thm.~5.1]{baragar} have shown that in each of the six remaining cases, there is a constant $c_{\text{BU}}>0$ such that 
 \begin{equation}\label{eq:BU-asy}
N_U(B)\sim c_{\text{BU}} (\log B)^2,
\end{equation}
as $B\to \infty$. The coefficient vectors for the six  surfaces, together 
with a numerical value for $c_{\text{BU}}$, are presented in Table~\ref{tb:surface-params}. 
 In fact, the  article~\cite[Sec.~4]{baragar} contains a small oversight that affects the leading constant. The authors multiply their constant by $3$ to account for negative coordinates, 
 whereas it should be multiplied by $4$: for each solution $(x,y,z)\in \NN^3$, there are the three additional solutions $(x,-y,-z)$, $(-x,y,-z)$, and $(-x,-y,z)$. The same oversight applies to \cite[Sec.~5]{baragar} and the constants in our Table~\ref{tb:surface-params} are thus corrected by a factor of $4/3$.
It is worth highlighting that  while Baragar and Umeda use the height $|x| + |y| + |z|$, rather than the sup-norm, 
this makes no difference to the leading term, since these norms are equivalent and the counting function  grows logarithmically in $B$.

\begin{table}[t]
  \begin{tabular}{c@{\hspace{4\tabcolsep}}rrrr@{\hspace{4\tabcolsep}}l}\toprule
   & $a$ & $b$ & $c$ & $d$ & $c_{\text{BU}}$  \\\midrule
  (i) & $1$ & $5$ & $5$ & $5$ & $5.22750241554\dots$ \\
  (ii) &$1$ & $3$ & $6$ & $6$ & $2.96508393913\dots$\\
  (iii) &$2$ & $7$ & $14$ & $14$ &$2.46790596426\dots$\\
  (iv) &$2$ & $2$&$3$ & $6$ & $4.05640933744\dots$\\
  (v) &$6$ & $10$ & $15$ & $30$ & $2.49318310680\dots$\\
  (vi) &$1$ & $2$ & $2$ & $2$ & $4.92081804684\dots$\\\bottomrule
  \end{tabular}
  \medskip
  \caption{Surfaces studied by Baragar and Umeda~\cite{baragar}.}\label{tb:surface-params}
\end{table}

\medskip

We are now ready to discuss our main heuristic, which comes from
the circle method. Such heuristics are typically obtained 
by examining the major arc contribution, for a suitable set of major arcs, and ignoring the contribution from the minor arcs. This approach would suffice for surfaces with trivial Picard group, since then the associated singular series  converges.  
However, for surfaces with non-trivial Picard group, such as the cubic surface
$x^3+ky^3+kz^3=1$ considered in Section~\ref{s:oblong}, 
the singular series 
diverges and the precise choice of major arcs would have a strong effect on the purported value of the leading constant. We shall avoid this difficulty by 
adopting a variant of 
the smooth $\delta$-function version of the circle method originating in
work of Duke, Friedlander and Iwaniec~\cite{DFI}, and  later  developed by
Heath-Brown~\cite[Thm.~1]{HB'}.  Once coupled with Poisson summation, the main idea is to ignore the contribution from the non-trivial characters, in order to 
obtain a heuristic for $N_U(B)$ for any cubic surface $U\subset \AAA^3$ that is smooth and log K3 over $\QQ$.
Here, we say that a smooth cubic surface $U_\QQ\subset \AAA_\QQ^3$ is {\em log K3} 
 if the minimal desingularisation $\tX_\QQ$ of the compactification $X_\QQ$ of $U_\QQ$ in $\PP_\QQ^3$ satisfies the property that the \emph{boundary} $\tD=\tX_\QQ \setminus U_\QQ$ is a divisor with strict normal crossings
whose class in $\Pic \tX$ is $\omega_{\tX}^\vee$. In particular, 
it follows from the adjunction formula that 
$U$ is log K3 if $X$ itself is smooth over $\QQ$ and
$X\setminus U$ has strict normal crossings.

In general, it may happen that $U$ contains $\AAA^1$-curves that are defined over $\ZZ$; for instance, this happens if any of the lines on
$X_{\bar{\QQ}}=X\otimes_\ZZ \QQbar$
 are defined over $\ZZ$, as in the example~\eqref{eq:many}.
 It is therefore natural to try and classify those log K3 surfaces which admit infinitely many $\AAA^1$-curves, a programme that is already under way over $\bar\QQ$, thanks to Chen and Zhu~\cite{A1}. 
In the presence of $\AAA^1$-curves it is natural to study the subset
$U(\ZZ)^\circ$ obtained by removing those points in $U(\ZZ)$ that are contained in any $\AAA^1$-curves defined over $\ZZ$, since we expect the
 contribution from integer points on these curves to dominate the counting function. 
As $B\to \infty$, this  leads us to analyse the modified counting function
\begin{equation}\label{eq:restrict-count}
N_U^\circ(B)=
\#\left\{ (x,y,z)\in U(\ZZ)^\circ: \max\{|x|,|y|,|z|\}\leq  B\right\}.
\end{equation}
We are now ready to reveal the main conjecture issuing from our investigation. 

\begin{conjecture}\label{con1}
  Let $U\subset \AAA^3$ be a cubic surface that is smooth and log K3 over $\QQ$ and that is
     defined by a cubic polynomial $f\in \ZZ[x,y,z]$. 
     Denote by $\rho_U$ the Picard number of $U$ over $\QQ$ and by $b$ the maximal number of components of $\tD(\RR)$ that share a real point. Then 
  \[
    N_{U}^\circ(B)=O_{U} \left((\log B)^{\rho_U+b}\right).
  \]
  \end{conjecture}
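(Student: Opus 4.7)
The plan is to derive the bound heuristically via the smooth $\delta$-symbol version of the Hardy--Littlewood circle method, following the strategy outlined in the introduction. First I would write $N_U^\circ(B)$ as a sum of $\delta(f(\x))$ over $\x\in \ZZ^3\cap [-B,B]^3$ with $\x$ off every $\AAA^1$-curve defined over $\ZZ$, and then replace the $\delta$-symbol by the Duke--Friedlander--Iwaniec smooth expansion as refined by Heath-Brown~\cite{HB'}. Poisson summation in the three integer variables converts the result into a double sum indexed by a dual frequency vector $\c\in\ZZ^3$ and an integer modulus $q\ge 1$. Following the heuristic announced in the introduction, I retain only the $\c=\0$ contribution, which factors as an archimedean integral against a singular series, and discard the $\c\neq\0$ terms as oscillatory.

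The next step is the archimedean analysis. One integrates the Leray measure on $U(\RR)$ against the characteristic function of $[-B,B]^3$. Because $U$ is log K3, this integral diverges, and the divergence is governed by the boundary divisor $\tD = \tX\setminus U$. In local analytic coordinates near a real point at which $b$ transversally intersecting components of $\tD(\RR)$ meet, the Leray form acquires a logarithmic pole in each of the $b$ transverse directions, so that the real-local integral behaves like $(\log B)^b$. Summing over real boundary strata and keeping the dominant one produces exactly the exponent $b$ featuring in Conjecture~\ref{con1}.

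The third step is the non-archimedean analysis. The product of local $p$-adic densities forms a singular series which, for $\rho_U>1$, will not converge absolutely. Truncating this series at a level $Q=Q(B)$ and interpreting it through a Tamagawa-style count of effective divisors on $\tX$ organised by their class in $\Pic \tX$, one expects a logarithmic divergence of order $\rho_U$. This yields a factor $(\log B)^{\rho_U}$, and multiplication by the archimedean contribution gives the claimed bound $O_U\bigl((\log B)^{\rho_U + b}\bigr)$.

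The main obstacle I foresee is the archimedean step: one must stratify $\tD(\RR)$ by the number of locally intersecting boundary components, carry out a toroidal integration of the Leray form in a neighbourhood of each stratum, and verify that strata with fewer than $b$ meeting components give strictly smaller powers of $\log B$. A secondary difficulty is to justify heuristically why the $\c\neq\0$ terms should be absorbed into the $O$-constant despite the divergence of the singular series, and to legitimise removing integrally defined $\AAA^1$-curves \emph{before} invoking Poisson summation, so that their linearly growing contribution does not re-enter through the local densities. These are precisely the delicate points at which the ``log K3'' hypothesis is brought to bear, since it is what ensures that $\tD$ has the correct structure to match the two exponents $b$ and $\rho_U$ rather than produce higher powers.
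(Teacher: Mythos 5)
Your high-level plan matches the paper's framework in Sections~\ref{ssec:arch-measures-general} and~\ref{s:heuristic}: apply the Duke--Friedlander--Iwaniec smooth $\delta$-function as in Proposition~\ref{prop:tran}, Poisson-sum, retain only $\c=\0$, trace the exponent $b$ to the archimedean volume growth (Proposition~\ref{prop:arch-volume-expansion}, via Chambert-Loir and Tschinkel), and trace $\rho_U$ to the Picard-theoretic divergence of the local densities (Proposition~\ref{prop:local-densities-convergence-factors}, which shows that $F(s)$ has a pole of order $\rho_U=\rho_{\tX}-r_{\tD}$ at $s=0$). All of these ingredients are indeed present in the paper.

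The genuine gap is your assertion that the $\c=\0$ term ``factors as an archimedean integral against a singular series'' and can then be bounded by multiplying $(\log B)^b$ by a truncated singular series of size $(\log B)^{\rho_U}$. After the $\delta$-expansion, the $\c=\0$ contribution is $\frac{2}{Q}\sum_q q^{-4}S_q(\0)\int w(B^{-1}\x)\,h\bigl(\tfrac{f(\x)}{qQ},\tfrac{q}{Q}\bigr)\d\x$, in which the modulus $q$ and the real point $\x$ are \emph{coupled} through the weight $h$; there is no archimedean $\times$ non-archimedean factorisation. Untangling this coupling is precisely what occupies the paper from Lemma~\ref{lem:Ds} through Theorem~\ref{thm:trivial-character}: one takes a Mellin transform in $q$, meets the pole of order $\rho_U+1$ of $F(s+1)\zeta(s+1)G(s)$ at $s=-1$, and computes a residue. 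The result is not a product but the alternating sum $r(B)=\sum_{j=0}^{\rho_U}\frac{(-1)^j(3/2)^{\rho_U-j}J_j(B)(\log B)^{\rho_U-j}}{j!(\rho_U-j)!}$ of cross terms. The $O$-bound of Conjecture~\ref{con1} does follow, since each $J_j(B)\ll(\log B)^{b+j}$ under Hypothesis~\ref{hyp'}, so your conclusion is not wrong; but the multiplicative structure you posit is, and this is not harmless: for $x^3+y^3+z^3=1$ in Section~\ref{s:one_cube} the signs in $r(B)$ conspire so that its leading coefficient vanishes, whereas the naive product is strictly positive. Separately, truncating the singular series at a level $Q(B)$ is the very manoeuvre the introduction says the $\delta$-symbol is chosen to circumvent (the cut-off contaminates the constant); the paper instead continues $F(s)$ meromorphically and extracts the pole. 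Finally, a small slip: ``for $\rho_U>1$'' should read ``for $\rho_U\geq 1$'', since $\prod_p\sigma_p$ already diverges as soon as $F(s)$ has any pole at $s=0$.
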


This resonates with  a conjecture of Harpaz~\cite[Conj.~1.2]{harpaz},
where an unspecified  logarithmic growth is predicted for a certain class of log K3 surfaces. 
Conjecture~\ref{con1} is the crudest conclusion that one can draw from our heuristic, which actually predicts an asymptotic formula for 
$N_{U}^\circ(B)$. 

\begin{heuristic}\label{main-heur}
    Let $U\subset\bA^3$ be a cubic surface satisfying the hypotheses of Conjecture~\ref{con1}, with 
    Picard number $\rho_U$, such that  $U(\ZZ)$ is not thin. 
    Denote by $b$ the maximal number of components of $\tilde D(\RR)$ that share a real point, and by $\cA$ all such sets of cardinality $b$; for $A = \{\tilde D_1(\RR),\dots, \tilde D_b(\RR)\} \in \cA$, denote by $Z_A=\tilde D_1(\RR)\cap \cdots \cap \tilde D_b(\RR)$ the corresponding intersection. Then
    \[ 
    N^\circ_U(B) \sim c_{\mathrm{h}} (\log B)^{\rho_U + b},
    \]
    as $B\to \infty$,
    with
    \begin{equation}\label{eq:leading-const-with-gamma}
    c_{\mathrm{h}} = \gamma_U \cdot \tau_{U,H}(V),
    \end{equation}
    where
 $\gamma_U\in \QQ_{>0}$,  
$\tau_{U,H}$ is the Tamagawa measure  
induced by the standard height $H$, 
and 
$V$ is the set  of limit points of  $U(\ZZ)$ in
   $\bigcup_{A\in \cA} Z_A \times U(\A^{\mathrm{fin}}_\ZZ)$. 
  \end{heuristic}

It is natural to impose the assumption that $U(\ZZ)$ is Zariski dense in this heuristic. We have been led to further require that the set of integral points is not thin, since we do not expect the leading constant in 
 \eqref{eq:lgw}  to agree with this heuristic and 
 $U(\ZZ)$ is thin in this case.

The Tamagawa measure $\tau_{U,H}$ is defined using residue measures at the archimedean place, as described in work  of
Chambert-Loir and Tschinkel \cite[Secs.~2.1.9 and 2.1.12]{ACL}.
The set $V$ of limit points can be studied via the Brauer--Manin obstruction, whose use to study integral points goes back to Colliot-Th\'el\`ene and Xu \cite{CTX}; Santens~\cite{tim} has developed a variant that can also explain failures of accumulation phenomena at the infinite place. A further obstruction to approximation over $\RR$ is analytic in nature, as expounded in  work of Wilsch~\cite{wilsch} and Santens~\cite{tim}. 
For log Fano varieties whose Brauer group modulo constants is finite, it would follow from a conjecture of Santens~\cite[Conj.~6.6 and Thm.~6.11]{tim} and an equidistribution theorem of Chambert-Loir and Tschinkel \cite[Prop.~2.10]{ACL} that the algebraic Brauer--Manin obstruction is the only one.
However,  as observed in  \cite{colliot,LM} for the Markoff-type surfaces~\eqref{eq:GS}, the
Brauer--Manin obstruction is  not always sufficient for log K3 surfaces.

In order to illustrate our work, we state here a concrete conjecture for  the
polynomial 
$f(x,y,z)=x^3+ky^3+kz^3-1$, where $k>1$ is  square-free.
We shall see in Section \ref{s:oblong} that $b=1$ and $\rho_U=2$ for the surface 
$U\subset \AAA^3$
defined by $f$.

\begin{conjecture}\label{k>1}
  Let $U\subset \AAA^3$ be the cubic surface defined by 
$x^3+ky^3+kz^3=1$, for a square-free integer $k>1$. Then Heuristic \ref{main-heur} holds with 
$\gamma_U=\frac{3}{8}$ and $V=D(\RR)\times 
U(\A^{\mathrm{fin}}_\ZZ)$.
\end{conjecture}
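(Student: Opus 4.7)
Since Conjecture \ref{k>1} specialises Heuristic \ref{main-heur} to $U\colon x^3 + ky^3 + kz^3 = 1$, the plan is to carry out the heuristic's ingredients for this explicit family and to identify the claimed rational $\gamma_U = 3/8$ and limit set $V$. I would begin with the geometry. The projective closure $X \subset \PP^3$ cut out by $x^3 + ky^3 + kz^3 = w^3$ is smooth (a direct Jacobian check suffices, since $k \neq 0$), and the boundary $D = X \cap \{w = 0\}$ is the smooth diagonal plane cubic $x^3 + ky^3 + kz^3 = 0$. Hence $\tX = X$, $\tD = D$ is geometrically irreducible, the strict normal crossings and anticanonical conditions hold, $b = 1$, and the only intersection $Z_A$ equals $D(\RR)$ itself. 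Combined with $\rho_U = 2$, established in Section \ref{s:oblong}, this already forces the predicted exponent $\rho_U + b = 3$.

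The next step is to show $V = D(\RR) \times U(\A^{\mathrm{fin}}_\ZZ)$. At each finite prime $p$, one combines the smoothness of $X$ and a Hensel-lift argument on $U(\FF_p)$ with the triviality of the algebraic Brauer group of $U$ modulo constants to see that $U(\ZZ)$ is dense in $U(\A^{\mathrm{fin}}_\ZZ)$. At the archimedean place one needs accumulation at every point of $D(\RR)$. For $k = 1$ Mahler's parameterisation \eqref{eq:k=1} sweeps out a full real arc; for square-free $k > 1$ I would either construct analogous polynomial identities or descend from the $k = 1$ case via a suitable change of variables, and verify that no analytic obstruction of Wilsch--Santens type excises any portion of $D(\RR)$.

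The core computation is then $\gamma_U = 3/8$. Tracing through the derivation of Heuristic \ref{main-heur} --- the smooth $\delta$-function circle method combined with Poisson summation, with non-trivial characters discarded --- the constant $\gamma_U$ is the rational pre-factor that survives once the singular integral is normalised as the residue measure along $D(\RR)$ and the local densities are assembled into the Tamagawa measure $\tau_{U,H}$. For our surface this factor reflects the degree-$3$ Jacobian of $f$ along $D$, the two log-divergences matching the two generators of $\Pic(U)$ modulo $[D]$, and the combinatorial $y \leftrightarrow z$ symmetry. Weighed against the sign overcounts on $(x,y,z)$ --- analogous to the $4/3$ correction flagged after Table \ref{tb:surface-params} --- these contributions should combine to exactly $3/8$.

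I expect the second and third steps to be the main obstacles: producing enough explicit integral solutions to establish density on all of $D(\RR)$ while ruling out real-analytic obstructions is delicate for general $k$, and the calculation of $\gamma_U$ requires meticulous bookkeeping of every normalisation appearing in the $\delta$-method expansion, since this is the only place where a concrete rational number must be pinned down.
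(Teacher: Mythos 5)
Your overall framework is right: you correctly identify the geometry ($X$ smooth, $D$ an irreducible smooth plane cubic, so $b=1$), take $\rho_U = 2$ from Section~\ref{s:oblong}, and observe that the triviality of $\Br(U_k)$ modulo constants leaves no Brauer--Manin adjustment, so that the natural candidate for the limit set is $V = D(\RR)\times U(\A^{\mathrm{fin}}_\ZZ)$. That much matches the paper.

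The genuine gap is in the extraction of $\gamma_U = \tfrac{3}{8}$. You attribute it to ``the degree-$3$ Jacobian of $f$ along $D$, \dots the combinatorial $y \leftrightarrow z$ symmetry'' and to sign overcounts ``analogous to the $4/3$ correction'', but none of these is the mechanism, and the $4/3$ correction in particular is a fix to an unrelated arithmetical slip in Baragar--Umeda. The paper obtains $\gamma_U$ deterministically by specialising Theorem~\ref{thm:trivial-character} to $\rho_U=2$: the contribution from the trivial character is $r(B)\sim C_\infty (\log B)^3$ with
\[
C_\infty = \frac{(3/2)^2\,\kappa_0}{2!} - \frac{(3/2)\,\kappa_1}{1!} + \frac{\kappa_2}{2!},
\]
where, from the explicit evaluation of $J_\ell(B)$ for this family (the analogue of Lemma~\ref{lem:jiB} stated in~\eqref{eq:spring}), $\kappa_\ell = \tfrac{3^\ell}{\ell+1}\,\mu_D$ with $\mu_D$ the residue measure of Proposition~\ref{prop:arch-volume-expansion}. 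Substituting gives $C_\infty = \bigl(\tfrac{9}{8} - \tfrac{9}{4} + \tfrac{3}{2}\bigr)\mu_D = \tfrac{3}{8}\mu_D$, and it is precisely this relation $C_\infty = \tfrac{3}{8}\mu_D$ that identifies $\gamma_U = \tfrac{3}{8}$ when matching Heuristic~\ref{con:higher-rank} with Heuristic~\ref{main-heur}. So the $\tfrac{3}{8}$ is a clean alternating-sum identity coming from the residue expansion at the order-$3$ pole, not from a symmetry count or a Jacobian degree. You should also not try to \emph{prove} equidistribution on all of $D(\RR)$ via explicit parametrisations: the statement that $V$ is the full product is itself part of the conjecture, not something established in the paper.
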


By adapting the  parameterisation of 
Lehmer \cite{lehmer} to the setting of Conjecture~\ref{k>1}, we are led to 
infinitely many $\mathbb{A}^1$-curves of increasing degree contained in $U$. 
We expect that a modification to work of 
Coccia \cite{coccia} would yield analogues of his results for $k=1$: the set of integer points on the
Lehmer curves is thin (cf.~\cite[p.~371]{coccia}), while its complement is not (cf.~\cite[Thm.~8]{coccia}).

\subsection*{Summary of the article}
We conclude our introduction by summarising the contents of the article.

 \subsubsection*{Section \ref{ssec:arch-measures-general}}

Formally speaking, 
our heuristic will involve the quantities
$$
\int_{-\infty}^\infty \int_{[-B,B]^3} e(t f(x,y,z)) \d x\d y\d z\d t
$$
and 
$$
\mu_\infty(B)=\lim_{\ve\to 0} \frac{1}{2\ve} 
\vol\{\x\in [-B,B]^3:
| f(\x)|<\ve\},
$$
both of which capture the real density of points on $U$.  We shall introduce some hypotheses 
concerning the convergence properties of the oscillatory integral. Moreover, 
in Proposition~\ref{prop:arch-volume-expansion} we shall apply
work of Chambert-Loir and Tschinkel \cite{ACL} to deduce that 
$\mu_{\infty}(B)$
grows like a power of $\log B$, as $B\to \infty$.

 \subsubsection*{Section \ref{s:heuristic}}

This  
is the heart of our paper and concerns a circle method heuristic  
applied  to $N_U(B)$. We shall derive an  asymptotic expansion of the  contribution from the trivial character, as $B\to \infty$, 
for a smoothly weighted variant of the counting function $N_U(B)$. 
This is achieved in Theorem \ref{thm:trivial-character}, which will align  with Conjecture~\ref{con1}. When $\rho_U=0$, we will arrive at a  precise asymptotic prediction for $N_U(B)$ in 
 Heuristic~\ref{heur:circle-method}. Furthermore, we shall place Heuristic \ref{main-heur} in the context of the Manin conjecture for rational points on Fano varieties.

 \subsubsection*{Section \ref{s:norm}} 
We show that the exponent of $\log B$ in Heuristic~\ref{heur:circle-method}  matches 
the asymptotic formula in \eqref{eq:lgw}.

 \subsubsection*{Section \ref{s:cubes}} 
We demonstrate that Heuristic~\ref{heur:circle-method}  matches the heuristic developed by Heath-Brown 
 \cite{33} for the sums of cubes example in \eqref{3cubes}, when $k$ is cube-free. 

 \subsubsection*{Section \ref{s:gen_cubes}}
We shall adapt our work in  Section \ref{s:cubes} to develop a heuristic for the surface 
  \eqref{3cubes} when $k$ is a cube, together with  the cubic surface 
 $
 x^3+ky^3+kz^3=1,
 $ 
when $k>1$
is a square-free integer. 
 These surfaces will be seen to have Picard rank $3$ and $2$, respectively. In the former case 
we compare with Heuristic \ref{main-heur} when $k=1$, using numerical data provided for us by Andrew Sutherland. 
For the second family of surfaces, we will gather numerical data for all square-free integer values of $2\leq k\leq 1000$ and discuss Conjecture \ref{k>1}.

 \subsubsection*{Section \ref{s:eg2}}  
  We test Heuristic~\ref{heur:circle-method} against the asymptotic formula \eqref{eq:BU-asy} of Baragar and Umeda. We will find that it correctly predicts the exponent of $\log B$, but that it fails to explain the leading constant. (Although we omit the details, similar arguments should go through for the 
  surfaces  \eqref{eq:GS}.)
In line with  Heuristic 
\ref{main-heur}, 
 we shall modify the heuristic leading constant to take into account failures of strong approximation.  
All of the surfaces in Table~\ref{tb:surface-params} are equipped with a group action that makes it very efficient to test numerically for failures of strong approximation. In addition to uncovering failures coming  from  the Brauer--Manin obstruction, 
we will find failures of strong approximation that occur at infinitely many primes.
In particular, we observe a failure of the  \emph{relative Hardy--Littlewood property}, as  introduced by Borovoi and Rudnick \cite[Def.~2.3]{borovoi-rudnick}. 
On the other hand, we conduct a numerical investigation of equidistribution in Section \ref{s:equi}, finding that the observed frequencies of reductions modulo $m$ occur with the expected frequency,
for various  $m\in \NN$. Nonetheless, it  seems unlikely that 
Heuristic~\ref{main-heur} is compatible with the 
 numerical values  occurring in \eqref{eq:BU-asy}. 

 \subsubsection*{Section \ref{s:markoff}}
We extend our heuristic to the singular Markoff surface, as defined by the polynomial \eqref{eq:markoff}. We will find that the situation is similar to the examples of 
Baragar and Umeda in Section \ref{s:eg2}. However, while failures of strong approximation don't explain the leading constant, as in Section \ref{s:eg2}, such a modification does help to explain the power of $\log B$.

 \subsubsection*{Section  \ref{s:higher-picard}}

 We gather
 numerical evidence for 
two further cubic surfaces. The first 
is
the cubic surface $U\subset \AAA^3$ defined by  
$$
(x^2-ky^2)z=y-1,
$$ 
for a square-free integer $k>1$. Under suitable assumptions, it has been shown by  Harpaz \cite{harpaz} that 
$U(\ZZ)$ 
is  Zariski dense, prompting him to ask in  \cite[Qn.~4.4]{harpaz} about 
the exponent of $\log B$ in the associated counting function $N_U^\circ(B)$, after removing the $\AAA^1$-curve $z=y-1=0$. We apply 
Heuristics~\ref{main-heur} and \ref{heur:circle-method} to deduce that the expected exponent is $2$ and we gather numerical evidence for all square-free integers $2\leq k\leq 1000$, which strongly supports this.

The second surface,  which takes
the shape
$$
  (ax+1)(bx+1) + (cy+1)(dy+1) =xyz,
$$
for  suitable  $a,b,c,d\in \ZZ$, 
 was also suggested to us by  Harpaz (in private communication). This example will be seen to have non-trivial Picard group and contain several 
$\AAA^1$-curves that are defined over $\ZZ$. 
In this case, moreover,  there are no involutions defined over $\QQ$ and so we do not have a way to approach an asymptotic formula for the counting function, nor do we have an efficient way of enumerating integer points. 
By searching for points of height $\leq 10^{10}$, and identifying problematic $\AAA^1$-curves of degrees $1,\dots,4$,  we find that the numerical data bears little resemblance to  Heuristic~\ref{main-heur}.


 \subsubsection*{Section \ref{s:conclusion}}
 We offer some concluding remarks.

\subsection*{Data availability}

The data used in Sections 
\ref{s:gen_cubes} and \ref{s:higher-picard}
is hosted on the 
{\em G\"ottingen Research Online Data} repository \cite{data}. 
The code used to  determine the data in
Sections~\ref{s:oblong}, \ref{s:final} and \ref{s:outlier}
is found on the second author's 
\href{https://github.com/fwilsch/cubicpts}{github}
 page.

\subsection*{Acknowledgments}
The authors owe a debt of thanks to Yonatan Harpaz for asking 
about circle method heuristics for log K3 surfaces.
His contribution to the resulting discussion is gratefully acknowledged. 
Thanks are also due to Andrew Sutherland for help with numerical data for 
the equation $x^3+y^3+z^3=1$, together with 
Alex Gamburd, Amit Ghosh, Peter Sarnak and Matteo Verzobio  for their interest in this paper.
Special thanks are due to Victor Wang for numerous 
 helpful conversations about the circle method heuristics. 
While working on this paper, the authors were
supported by a 
FWF grant 
(DOI 10.55776/P32428), and the first author 
was supported  by a further 
FWF grant 
(DOI 10.55776/P36278)
and 
a grant from the 
School of Mathematics at the 
{\em Institute
for Advanced Study} in Princeton.

\section{Archimedean  densities} 
\label{ssec:arch-measures-general}

Let $f\in \ZZ[x,y,z]$ be a cubic polynomial and put 
\begin{equation}\label{eq:def-g}
g(\x)=B^{-3}f(B\x),
\end{equation}
where $\x=(x,y,z).$
Note that $g(\x)\ll 1$, where the implied constant is only allowed to depend on the coefficients of $f$.
Given a compactly supported bounded function 
$w:\RR^3\to \RR_{\geq 0}$, our work will feature the oscillatory integral
\begin{equation}\label{eq:def-I}
I(t)=\int_{\RR^3}
w(\x) e\left(tg(\x)\right)\d \x,
\end{equation}
for $t\in \RR$. 
Note that $I(t)$ also depends on $B$, in view of the definition of $g$. 
In traditional applications of the circle method the real density often arises formally via the oscillatory integral 
\begin{equation}\label{eq:infinity'}
\sigma_\infty(B)=
\int_{-\infty}^\infty
I(t) \d t.
\end{equation}
An alternative formulation is via the limit
\begin{equation}\label{eq:infinity}
\mu_{\infty}(B)=\lim_{\ve\to 0} \frac{1}{2\ve} 
\vol\{\x\in [-B,B]^3:
| f(\x)|<\ve\},
\end{equation}
and it usually  possible to prove 
that $\sigma_\infty(B)$ and $\mu_{\infty}(B)$ both converge to the same quantity, as $B\to \infty$. 
However,  there are subtleties in the present setting and it will be convenient to build this into our  assumptions. 

\begin{hyp}\label{hyp:garlic}
Assume that 
$\sigma_\infty(B)$ and 
$ \mu_\infty(B)$ both converge, 
in the notation of \eqref{eq:infinity'} and  \eqref{eq:infinity}.  Then 
 $\sigma_\infty(B)
\sim \mu_\infty(B)$, as $B\to \infty$.
\end{hyp}

For the polynomial   \eqref{3cubes}, we shall verify  this hypothesis 
in Section \ref{s:clip}.

\subsection{Oscillatory integrals}

Let us begin by discussing some properties and assumptions around the oscillatory integral $I(t)$ in 
\eqref{eq:def-I} and the real density \eqref{eq:infinity'}.
To begin with, it is clear that $I(t)$ is infinitely differentiable and satisfies 
\begin{equation}\label{eq:trivial}
I(t)\ll 1,
\end{equation}
 for any $t\in \RR$, where 
the implied constant depends only on $w$.
If  $w$ is a smooth function and 
$|\nabla f(\x)|>0$ throughout $\supp(w)$, 
then it is possible to establish 
exponential decay for $I(t)$, by 
using repeated applications of integration by parts.
This would lead to 
a bound of the form
$$
\int_{-\infty}^\infty
|I(t)| \d t \ll  1,
$$
which is the most favourable situation and underpins  many applications of the  circle method. 
When  
$\nabla f(\x)=\0$
for some $\x\in \supp(w)$,
on the other hand,  the situation is much more subtle, as indicated 
in works of Greenblatt \cite{greenblatt} and Varchenko \cite{varchenko}.

\begin{example}\label{example-3cubes}
It is instructive to consider the polynomial
$  f(x,y,z)=x^3+y^3+z^3-k$,
for  non-zero $k\in \ZZ$. Taking $w(x,y,z)=\nu(x)\nu(y)\nu(z)$, where $\nu:\RR\to\RR$ is a smooth even bump function such that $\nu(x)=1$ on $[-1,1]$, it follows that 
$$
I(t)=e(-kt/B^3) R_\nu(t)^3,
$$
where
$$
R_\nu(t)=\int_{-\infty}^\infty \nu(x) e(tx^3)\d x.
$$
The second  derivative test \cite[Lem~4.4]{tit} yields  $R_\nu(t)\ll \min\{1,|t|^{-1/3}\}$. Hence 
$
I(t)\ll  \min\{1,|t|^{-1}\}$
and we have 
$$
\int_{-\infty}^\infty |t|^{-\delta} |I(t)|\d t \ll_\delta 1,
$$
for any $\delta\in (0,1)$, where the implied constant depends on $\delta$.
We can get an asymptotic formula for $I(t)$, as $|t|\to \infty$, by noting that 
$$
R_\nu(t)=\int_{-\infty}^\infty e(tx^3)\d x -
R_{1-\nu}(t).
$$
The first term can be evaluated as 
$$
\int_{-\infty}^\infty e(tx^3)\d x=\frac{1}{|t|^{1/3}}\cdot \frac{\Gamma(\frac{1}{3})}{(2\pi)^{1/3}\sqrt{3}},
$$
for any $t\in \RR^*$. Since $1-\nu$ is smooth and supported on the region $\RR^2\setminus [-1,1]$, the second term is easily seen to be $O_N(|t|^{-N})$ on repeated integration by parts. 
Hence
\begin{equation}\label{eq:I(t)}
I(t)=
\frac{\Gamma(\frac{1}{3})^3}{6\pi \sqrt{3}}
\cdot 
\frac{e(-kt/B^3)}{|t|}
 +O_N(|t|^{-N}).
\end{equation}
This formula can be used to check that  the integral
$$
\int_{-\infty}^\infty  \frac{I(t) }{(2+\pi^2t^2)^{\frac{s+1}{2}}}
\d t
$$
is a holomorphic function of $s\in \CC$ in the half-plane $\Re(s)\geq -1$.
\end{example}

Motivated by this example, our circle method heuristic will proceed under the following assumptions about $I(t)$.

\begin{hyp}\label{hyp}
Let $I(t)$ be given by \eqref{eq:def-I}. Then 
the following hold:
\begin{enumerate}
\item[(i)] We have 
$$
\int_{-\infty}^\infty |t|^{-\delta} |I(t)|\d t \ll_\delta 1,
$$
for any $\delta\in (0,1)$, where 
the implied constant  depends only on $w, f$ and $\delta$.
\item[(ii)] The integral
$$
\int_{-\infty}^\infty  \frac{I(t) }{(2+\pi^2t^2)^{\frac{s+1}{2}}}
\d t
$$
is a holomorphic function of $s\in \CC$ in the half-plane $\Re(s)\geq -1$.
\end{enumerate}
\end{hyp}

\subsection{The real density}

We now proceed to an analysis of 
$\mu_\infty(B)$, as defined in~\eqref{eq:infinity}, using work of 
Chambert-Loir and Tschinkel \cite[Thm.~4.7]{ACL}. To begin with, 
we write
\[
  \mu_\infty(B)=\lim_{\eps\to 0} \frac{1}{2\eps} \vol\left\{P\in\bA^3(\RR) : H_\infty(P)\le B, ~\abs{f(x,y,z)}< \eps \right\},
\]
where $H_\infty(P)=\max\{\abs{x},\abs{y},\abs{z},1\}$ for a real point $P=(x,y,z)\in \bA^3(\RR)$. This is a volume on the surface $U$ defined by $f$.   Let $f_0(t_0,x_0,y_0,z_0)$ be the homogenisation of $f(x,y,z)$, with 
  $x=x_0/t_0$, $y=y_0/t_0$, and $z=z_0/t_0$, so that $f = f_0/t_0^3$.
Let $X = V(f_0)$ be the closure of $U$ in $\PP^3$, which we assume to be normal.
Let $\rho\colon \tX\to X$ be a minimal desingularisation. We shall assume that $\tD = \tX\setminus \tU$ has strict normal crossings and that $U$ is log~K3, so that the log canonical bundle $\omega_{\tX}(\tD) \cong \cO_{\tX}$ is trivial. As a consequence of the adjunction formula, this condition is equivalent to $\rho^*\cO_X(D) \cong \cO_{\tX}(\tD)$, where $D=X\setminus U$;
in particular, this is automatic if $X$ is smooth.

The Leray form on $U$ is a regular 2-form $\omega$ on $U$ such that $\diff f \wedge \omega=\diff x\wedge \diff y \wedge \diff z$.  This allows us to write
\begin{equation}\label{eq:mu_infty}
  \mu_\infty(B)=\int_{P\in U(\RR),\ H_\infty(P)\le B} \abs{\omega}.
\end{equation}
We shall endow certain line bundles with adelic metrics.
On $\cO_{\PP^3}(d)$, for $d\in\ZZ$, consider the standard sup-norm
\begin{equation}\label{eq:norm-O-d}
  \norm{g(P)}_v=\frac{\abs{g(P)}_v}{\max\{\abs{t_0}_v,\abs{x_0}_v,\abs{y_0}_v,\abs{z_0}_v\}^d},
\end{equation}
where $P=(t_0:x_0:y_0:z_0)\in\PP^3(\QQ_v)$ is a point over one of the local fields, and $g\in\Gamma(\cO_{\PP^3}(d),U)$ is a local section.
We have $\omega_{\PP^3}\cong \cO_{\PP^3}(-4)$ mapping $\diff x\wedge \diff  y \wedge \diff z$ to $t_0^{-4}$. This induces a metric on $\omega_{\PP^3}$
with
$$
  \norm{\diff x \wedge \diff y \wedge \diff z}_{\omega_{\PP^3}} = \norm{t_0^{-4}}_{\cOPt(-4)} = \max\{1, \abs{x}, \abs{y},\abs{z}\}^4,
$$
after dividing numerator and denominator of~\eqref{eq:norm-O-d} by $t_0^{-4}$. We have an isomorphism
$\cOPt(X) \to \cOPt(3)$,
mapping the canonical section $1_X$ to $f_0$, inducing an adelic metric on the former bundle.
Now the adjunction isomorphism $\omega_{X}\to\omega_{\PP^3}(X)|_{X}$ induces a metric on $\omega_{X}$. We follow~\cite[Sec.~2.1.13]{ACL} to get an explicit description.
Consider the local equation $f\in\Gamma(\bA^3,\cO_{\PP^3}(-X))$  of $X$. On this bundle, we have an adelic metric (induced by the one on $\cOPt(X)$), with
\[
  \norm{f}_{\cOPt(-X)} \norm{1_{X}}_{\cOPt(X)}=\abs{f},
\]
since the product of the two sections on the left is $f$ in $\cO_{\PP^3}\subset \cK_{\PP^3}$. As the adjunction isomorphism sends $\omega\mapsto \omega\wedge f^{-1}\diff f$, we get
\begin{equation}\label{eq:norm-omega}
  \begin{aligned}
  \norm{\omega}_{\omega_{X}}
  & =
  \norm{\omega \wedge f^{-1} \diff f}_{\omega_{\PP^3}(X)}\\
&    =
  \norm{f}_{\cOPt(-X)}^{-1}
  \norm {\omega \wedge \diff f}_{\omega_{\PP^3}}
  \\
  &=
  \frac{ \norm{1_{X}}_{\cOPt(X)} }{ \abs{f} }
  \norm{\diff x \wedge \diff y \wedge \diff z}_{\omega_{\PP^3}}
  \\
  &=
  \frac{ \abs{f} }{ \max\{1, \abs{x}, \abs{y}, \abs{z}\}^3 }
  \frac{1}{\abs{f}}
  \max\{1,\abs{x}, \abs{y}, \abs{z}\}^4
  \\
  &=
  \max\{ 1, \abs{x}, \abs{y}, \abs{z} \}.
  \end{aligned}
\end{equation}

Using all
this, we can reformulate~\eqref{eq:mu_infty}  as
\[
  \mu_\infty(B) =
  \int_{\{P\in X(\RR):  \norm{t_0}^{-1} \le B\}}
  \norm{1_D}^{-1} \frac{\abs{\omega}}{\norm{\omega}}.
\]
As $\rho$ is crepant, the metric on $\omega_X$ can be pulled back to one on $\omega_{\tX}$, and the one on its dual bundle  $\cO_X(D)$ to $\cO_{\tX}(\tD)$ by the log K3 assumption. It follows that the above volume can be expressed as 
\[
  \mu_\infty(B) =
  \int_{\{P\in \tX(\RR):  \norm{\rho^*t_0}^{-1} \le B\}}
  \norm{1_{\tD}}^{-1} \frac{\abs{\rho^*\omega}}{\norm{
    \rho^*\omega}}
\]
on the desingularisation: the exceptional set and its image are null sets and the integrands and height conditions coincide outside them.
In the notation of~\cite[Sec.~4.2]{ACL},  we have 
\[
  \norm{1_{\tD}}^{-1} \frac{\abs{\rho^*\omega}}{\norm{\rho^*\omega}} = \diff \tau_{(\tX,\tD)},
\]
and an asymptotic expansion of this quantity is studied by means of its Mellin transform and a Tauberian theorem~\cite[Thm.~4.7]{ACL}. In this analysis, Tamagawa measures on certain subsets of $D$ arise naturally.

Let $b$ be the maximal number of components of $\tD(\RR)$ that share a common real point,
as in Conjecture \ref{con1}.
 Note that if the set of integral points is Zariski dense, the set $U(\RR)$ of real points cannot be compact, so that $\tD(\RR)\ne \emptyset$ and $b\ge 1$.
Denote by $\cA$ all sets of such components of cardinality $b$. For each $A\in \cA$, let $Z_A = \bigcap_{D'\in A} D'$ be the intersection, which is a nonempty subset of $\tD(\RR)$ by assumption, and set $D_A = \sum_{D\in A}D$ and $\Delta_A = \sum_{D'\in \cA\setminus A} D'$.
In~\cite[Sec.~2.1.12]{ACL}, Chambert-Loir and Tschinkel define a \emph{residue measure} $\tau_A$ on $Z_A$, which we normalise with a factor of $2^b$ as in \cite[Sec.~4.1]{ACL}.
This measure depends on a metric on $\omega_{\tX}(D_A)$, but $ \norm{1_{\Delta_A}}^{-1} \tau_A$
does not, provided that  the metrics on $\omega_{\tX}(D_A)$ and $\cO_{\tX}(\Delta_A)$ are chosen so that their product coincides with the one on $\omega_{\tX}(D)$.
Moreover, since the residue measure is finite and $\norm{1_{\Delta_A}}$ is bounded from below on $Z_A$ as a consequence of the maximality assumption and compactness, we get a finite  volume
\begin{equation}\label{eq:min-stratum-integral}
  \mu_A = 2^b \int_{Z_A} \norm{1_{\Delta_A}}^{-1} \diff \tau_A.
\end{equation}
We may now record the following  asymptotic formula.

\begin{proposition}\label{prop:arch-volume-expansion}
  Under the above assumptions and notation, including that 
the set of  integral points is Zariski dense,
  we have 
  \[
    \mu_\infty(B)=c_\infty (\log B)^{b} + O\left((\log B)^{b-1}\right),
  \]
  where
  \begin{equation}\label{eq:arch-constant-abstract}
    c_\infty = \frac{1}{b!}\sum_{A\in\cA} \mu_A.
  \end{equation}
\end{proposition}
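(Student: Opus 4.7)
The plan is to apply the Chambert-Loir--Tschinkel machinery \cite[Sec.~4]{ACL} directly to the reformulated integral
$$
\mu_\infty(B) = \int_{\{P\in \tX(\RR):\ \norm{\rho^*t_0}^{-1} \le B\}} \norm{1_{\tD}}^{-1} \frac{\abs{\rho^*\omega}}{\norm{\rho^*\omega}} = \int_{H_\infty \le B} \diff \tau_{(\tX,\tD)},
$$
where $H_\infty = \norm{\rho^* t_0}^{-1}$. The strategy is to pass to the Mellin transform, identify the pole structure at the abscissa of convergence, and conclude by a Tauberian theorem. More precisely, I would consider
$$
Z(s) = \int_{\tX(\RR)} H_\infty^{-s}\, \diff \tau_{(\tX,\tD)},
$$
which converges for $\Re(s)$ large, and which coincides (up to a multiplicative factor of $s$) with the Mellin transform of $B \mapsto \mu_\infty(B)$.

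First, I would verify that the hypotheses of \cite[Thm.~4.7]{ACL} are met: the desingularisation $\tX \to X$ is chosen so that the boundary $\tD$ has strict normal crossings and the log K3 hypothesis identifies $\omega_{\tX}(\tD)$ with $\cO_{\tX}$, so that the boundary measure $\tau_{(\tX,\tD)}$ is well-defined and independent of auxiliary choices after being paired with $\norm{1_{\tD}}^{-1}$. The Zariski density of integral points ensures $\tD(\RR)$ is non-empty and hence $b \ge 1$; moreover, the $\cA^1$-curves issue does not play a role here as we are only computing an archimedean volume.

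Next, \cite[Thm.~4.7]{ACL} yields a meromorphic continuation of $Z(s)$ to a neighbourhood of $s = 0$, with a pole whose order equals the maximum, over real points of $\tX$, of the number of boundary components meeting there; this is precisely $b$. The residue at this pole is computed stratum by stratum as a sum over the top-dimensional intersection strata, giving a contribution from each $A \in \cA$ of $\int_{Z_A} \norm{1_{\Delta_A}}^{-1} \diff \tau_A$. Matching our normalisation of the residue measure (the factor $2^b$ is absorbed into the definition \eqref{eq:min-stratum-integral}), the leading singular term is
$$
Z(s) = \frac{1}{s^b} \sum_{A\in\cA} \mu_A + O\!\left(\frac{1}{s^{b-1}}\right), \quad s \to 0^+.
$$

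Finally, an application of the standard Tauberian theorem (as invoked in \cite[Sec.~4]{ACL}) translates this pole structure into the asymptotic
$$
\mu_\infty(B) \sim \frac{1}{b!}\left(\sum_{A\in \cA}\mu_A\right) (\log B)^b,
$$
with the claimed error term of one lower order in $\log B$; the $1/b!$ factor arises from the standard relation between a pole of order $b$ and the exponent $b$ in the log power. The main technical obstacle is ensuring the compatibility of the various metric normalisations in \eqref{eq:norm-omega} with the local factorisations implicit in the stratification of $\tD$: one must check that the metrics on $\omega_{\tX}(D_A)$ and $\cO_{\tX}(\Delta_A)$ induced by restriction/pullback multiply back to the metric on $\omega_{\tX}(\tD)$, so that the $\mu_A$ are intrinsically well-defined. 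Once this bookkeeping is handled --- which is essentially the content of \cite[Sec.~2.1.12]{ACL} --- the asymptotic follows.
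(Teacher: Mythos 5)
Your proposal is correct and takes essentially the same approach as the paper: both invoke the Chambert-Loir--Tschinkel result \cite[Thm.~4.7]{ACL} to get the asymptotic expansion of $\mu_\infty(B)$. The paper simply cites the theorem directly with the relevant parameter choices ($d_\alpha = \lambda_\alpha = 0$, whence $\sigma = 0$), whereas you unpack the Mellin-transform/Tauberian mechanism that underlies it; the substance is the same.
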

\begin{proof}
  This follows from \cite[Thm.~4.7]{ACL} with $d_\alpha = \lambda_\alpha = 0$ for all $\alpha\in \cA$, whence $\sigma=0$.  Then \cite[Eq.~(4.3)]{ACL} coincides with $\mu_A$
once evaluated at $s=0$.
\end{proof}

In the next result we derive an  explicit expression for $c_\infty$ 
for certain polynomials featuring in our work. 

\begin{lemma}\label{lem:arch-bu}
Suppose that $X$ is a smooth compactification of the affine surface defined by   $f(x,y,z) = q(x,y,z) - dxyz$, where $q$ is a quadratic polynomial. Then
  \[
    \sum_{A\in \cA} \mu_A=\frac{12}{\abs{d}},
  \]
  so that $c_\infty=6/|d|$ in \eqref{eq:arch-constant-abstract}.
\end{lemma}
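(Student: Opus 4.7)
The plan is to compute the archimedean leading constant $c_\infty$ of Proposition~\ref{prop:arch-volume-expansion} directly as the $(\log B)^2$-coefficient of $\mu_\infty(B)$; the claim then follows from $\sum_{A\in\cA}\mu_A = b!\,c_\infty$. Writing the homogenisation as $f_0 = t_0 Q(x_0,y_0,z_0,t_0) - d x_0 y_0 z_0$ with $Q$ the degree-$2$ homogenisation of $q$, the boundary $\tD = D = X\cap\{t_0=0\}$ splits as $L_x + L_y + L_z$, the three coordinate lines in $\{t_0=0\}\cong\PP^2$. They pairwise meet at the distinct real points $p_1 = (0:0:0:1)$, $p_2 = (0:1:0:0)$, $p_3 = (1:0:0:0)$, no three of which are concurrent, so $b = 2$ and $\cA$ consists of three singletons $Z_A = \{p_i\}$. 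Smoothness of $X$ at $p_i$ forces the respective square coefficient of $q$ to be non-zero; by symmetry we focus on $p_1$, where this coefficient is $c_{zz}$.

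In the affine chart $z_0 = 1$ with chart coordinates $(t,x,y)$, the surface is cut out by $tQ(x,y,1,t) - dxy = 0$, and since $Q(0,0,1,0) = c_{zz}\neq 0$ the pair $(x,y)$ serves as local coordinates with $t = (d/c_{zz})xy + \cdots$. Relating the affine coordinates $(x_{\mathrm{aff}},y_{\mathrm{aff}},z_{\mathrm{aff}}) = (x/t, y/t, 1/t)$ to $(x,y)$ on the surface, a direct calculation gives
\[
dx_{\mathrm{aff}} \wedge dy_{\mathrm{aff}} = -\frac{c_{zz}^2}{d^2 x^2 y^2}\,dx\wedge dy + \cdots,\qquad \frac{\partial f}{\partial z_{\mathrm{aff}}} = \frac{c_{zz}^2}{dxy} + \cdots,
\]
where the second identity uses $\partial q/\partial z \sim 2c_{zz}z_{\mathrm{aff}}$ and the partial cancellation against $dx_{\mathrm{aff}}y_{\mathrm{aff}} = c_{zz}^2/(dxy) + \cdots$. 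Taking the quotient, the $q$-dependences cancel and the Leray form pulls back to
\[
\omega = \frac{dx_{\mathrm{aff}}\wedge dy_{\mathrm{aff}}}{\partial f/\partial z_{\mathrm{aff}}} = -\frac{1}{d}\cdot\frac{dx\wedge dy}{xy} + \cdots
\]
near $p_1$, with $L_x = \{x=0\}$ and $L_y = \{y=0\}$.

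Since $z_{\mathrm{aff}} \sim c_{zz}/(dxy)$ dominates the other affine coordinates near $p_1$, the height bound $H_\infty\le B$ becomes $|xy|\gg 1/B$ in the chart. Splitting $(x,y)$ into its four sign quadrants and passing to coordinates $(\log|x|,\log|y|)$, the region $\{|xy|\ge C/B,\ |x|,|y|\le\delta\}$ becomes a triangle of area $\tfrac12(\log B)^2 + O(\log B)$, so the integral of $|\omega|$ over a fixed neighbourhood of the origin contributes
\[
\frac{1}{|d|}\cdot 4 \cdot \tfrac{1}{2}(\log B)^2 + O(\log B) = \frac{2}{|d|}(\log B)^2 + O(\log B).
\]
By the cyclic symmetry between $p_1$, $p_2$, $p_3$ under permutation of the coordinates the same contribution arises at each point, so $\mu_\infty(B) = (6/|d|)(\log B)^2 + O(\log B)$. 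Hence $c_\infty = 6/|d|$ and $\sum_{A\in\cA}\mu_A = 2!\cdot c_\infty = 12/|d|$ by Proposition~\ref{prop:arch-volume-expansion}. The main technical hurdle is the pullback calculation of $\omega$: the various $c_{zz}$-dependences from the Jacobian, from $\partial q/\partial z$, and from $dx_{\mathrm{aff}}y_{\mathrm{aff}}$ must cancel precisely to leave the universal residue $-1/d$ responsible for the stated constant.
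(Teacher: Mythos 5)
Your proof is correct in substance and takes a genuinely different route from the paper. The paper computes each $\mu_A$ directly from its definition as a Chambert-Loir--Tschinkel residue measure, working through the chain of norm identities in~\eqref{eq:residue-measure} ($\norm{1_{L_3}}$, $\norm{f'^{-1}}$, $\norm{\diff x'\wedge\diff y'\wedge\diff f'}$, $\norm{1_{L_1}1_{L_2}1_{L_3}}$), obtaining the value $1/\abs{d}$ at the vertex and then renormalising by $c_\RR^2=4$. You instead compute the $(\log B)^2$-coefficient of $\mu_\infty(B)$ directly, pulling the Leray form back to the local coordinates $(x,y)$ at the vertex to find $\omega = -\tfrac{1}{d}\tfrac{\d x\wedge\d y}{xy}+\cdots$, converting the height condition to a logarithmic triangle of area $\tfrac12(\log B)^2$ in each of the four sign quadrants, and only then reading off $\sum_A\mu_A = b!\,c_\infty$ from Proposition~\ref{prop:arch-volume-expansion}. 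Your way is more elementary and bypasses the residue-measure formalism entirely, which makes the source of the factor $4$ at each vertex (the four sign quadrants) more transparent than the paper's appeal to the $c_\RR^b$-normalisation; the paper's computation in turn fits cleanly into the framework needed for the general asymptotic expansion and would extend more readily to strata of higher codimension.

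Two small points. First, in the coordinate system $(t_0:x_0:y_0:z_0)$ used throughout the paper, the vertex $L_1\cap L_3$ is $(0:0:1:0)$, not $(1:0:0:0)$ (which lies in the affine chart, not at infinity); this is clearly a typo since the rest of your argument treats the three vertices symmetrically. Second, you implicitly use that the contribution to $\mu_\infty(B)$ from outside fixed neighbourhoods of the three vertices is $O(\log B)$; this is standard (away from $D$ the Leray measure is finite, and near a single line but away from vertices there is only a single logarithmic singularity), and is in any case guaranteed by the structural form of Proposition~\ref{prop:arch-volume-expansion}, but it would be worth a sentence.
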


\begin{proof}
  Write $q(x,y,z) = Q(x,y,z) + L(x,y,z)+e$ with $Q$ and $L$ homogeneous of degrees $2$ and $1$, respectively. Then $X$ is defined by the cubic form
  \[
    f_0 = dx_0 y_0 z_0 - Q(x_0, y_0, z_0)t_0 - L(x_0, y_0, z_0) t_0^2 - et_0^3.
  \]
  In particular, the complement $D$ of $U$ in $X$ is $V(dx_0y_0z_0)$, a union of three lines $D = L_1 + L_2 + L_3$.
  The Clemens complex associated with $D$ is a triangle with three edges $\{L_i,L_j\}$, for $1\le i<j\le 3$. Associated with each of these edges is a \emph{residue measure} $\tau_{i,j}$ on $L_i\cap L_j$, this intersection consisting of only one rational point $P_{i,j}$.
  For the case $(i,j)=(1,2)$ we have 
  $$
  L_1 = V(t_0,x_0), \quad L_2 = V(t_0,y_0), \quad P_{1,2} = (0:0:0:1).
  $$ 
  We  are interested in the norm
  $\norm{1_{L_k}}_{\omega_P}$ induced by the adjunction formula.
  Consider the affine chart around $P_{1,2}$ given by the coordinate functions $x'=x_0/z_0$, $y'=y_0/z_0$, and $t'=t_0/z_0$. In these coordinates, $P_{1,2}=(0,0,0)$ and $X$ is cut out by
  \[
    f'=f_0/z_0^3 = d x' y' - Q(x',y',1)t' - L (x', y', 1) t'^2 - et'^3.
  \]
  Note that the two partial derivatives $\diff f'/\diff x'$, $\diff f'/\diff y'$ vanish in $(0,0,0)$, so that 
  \[
\frac{    \diff f'}{ \diff t'} (0,0,0)= Q(0,0,1)\neq 0,
  \]
by the smoothness assumption. Since $f'$ is analytic, so is $t'$ as a function of $x'$ and $y'$ by the implicit function theorem. 
  Note that
  \[
    f' = dx'y' - Q (0,0,1) t'(1+O(x')+O(y')) + O(t'^2)
  \]
  as a formal power series. Hence
  \[
    t' = \frac{d x'y'}{Q(0,0,1)} (1 + O(x')+O(y')).
  \]

  Now
  \begin{equation}\label{eq:residue-measure}
    \norm{1_{L_3}}\norm{x'^{-1}\diff x' \wedge y'^{-1}\diff y} = \norm{f'^{-1}}\norm{\diff x' \wedge \diff y' \wedge \diff f'} \frac{\norm{1_{L_1}1_{L_2}1_{L_3}}}{\abs{x'y'}},
  \end{equation}
  by  arguments similar to those appearing in the proof of Proposition \ref{prop:arch-volume-expansion}.
  Analogously to there, $\norm{f'^{-1}}=\max\{\abs{t'}, \abs{x'}, \abs{y'},1\}^{-3} = 1+O(x')+O(y')$.
  Note that
  \[
    \diff f' = (Q(0,0,1)+O(x')+O(y')) \diff t' + f_1 \diff x + f_2 \diff y
  \]
  for some $f_1$ and $f_2\in \QQ[x', y']$, so that
  \begin{align*}
  \norm{\diff x' \wedge \diff y' \wedge \diff f'}
  &= \abs{Q(0,0,1)}\norm{\diff x' \wedge \diff y' \wedge \diff t'(1+O(x')+O(y'))}  \\
  &= \abs{Q(0,0,1)}\max\{\abs{t'},\abs{x'},\abs{y'}, 1\}^4 (1+O(x')+O(y')) \\
  & = \abs{Q(0,0,1)}+O(x')+O(y').
  \end{align*}
  Finally,
  \begin{align*}
  \norm{1_{L_1}1_{L_2}1_{L_3}}
  = \norm{t_0} 
  &= \frac{\abs{t_0}}{\max\{\abs{t_0},\abs{x_0},\abs{y_0},\abs{z_0}\}} \\
  &= \frac{\abs{t'}}{\max\{\abs{t'},\abs{x'},\abs{y'}, 1\}} \\
  & = \abs{\frac{d x'y'}{Q(0,0,1)}} (1 + O(x')+O(y')).
  \end{align*}
Hence~\eqref{eq:residue-measure} becomes
$    \abs{d} + O(x') + O(y')$.
  The integral~\eqref{eq:min-stratum-integral} is over a single point and its value is simply the inverse of~\eqref{eq:residue-measure} evaluated at $P_{1,2} = (0,0)$ in the chosen chart. It still has to be renormalised by multiplying with $c_\RR^2=4$, as in \cite[Sec.~4.1]{ACL}.
 The sum in~\eqref{eq:arch-constant-abstract} now runs over the three edges of the Clemens complex and each of the summands $\mu_A$ is equal to $4/\abs{d}$, finishing the proof.
\end{proof}

\section{A circle method heuristic}\label{s:heuristic}

In this section we explore a heuristic  based on the 
the smooth $\delta$-function version of the circle method due to  Duke, Friedlander and Iwaniec~\cite{DFI}. This was developed and applied to quadratic forms by 
Heath-Brown~\cite{HB'} and put  on an adelic footing 
by Getz~\cite{getz} and Tran~\cite{tran}, in an effort to detect  lower order terms. It is the latter  approach that we shall adopt here. We begin, however, by analysing 
a certain Dirichlet series whose coefficients are  complete exponential sums.

To fix notation, let $U=V(f)\subset \bA^3_\QQ$ and $\mU=V(f)\subset \bA^3_\ZZ$ be the $\QQ$-variety and $\ZZ$-scheme defined by our irreducible, cubic polynomial $f$. Throughout, we shall assume that $U$ is smooth, and only briefly sketch a key difference of the singular case 
in Section \ref{eq:lied}. Denote  by $\mX$ and $X$ the closures of $\mU$ and $U$ in $\PP^3_\ZZ$ and $\PP^3_\QQ$, respectively, and assume that $X$ is normal.
If $X$ is singular, some parts of our arguments will require us to pass to a minimal desingularisation $\tX \to X$,  described by a sequence of blowups of $\PP^3$. Let $\rho\colon \mtX \to \mX$ be the model described by the sequence of blow-ups in the closures of the centres. As $U$ is smooth, these blowups keep it and its model $\mU$ invariant. Finally, denote by $\mtD = \mtX\setminus \mU$ and $\tD=\tX\setminus U$ the \emph{boundary divisor}. If $\tD$  does not have strict normal crossings, we replace $\tX$ and $\mtX$ by varieties arising as blow-ups with centres outside $U$ that achieve this condition. Finally, let $S$ be the set of primes of bad reduction of $\mtX$.

\subsection{Exponential sums and global $L$-functions}

Let $e_q(\cdot)=\exp(\frac{2\pi i\cdot}{q})$, for any $q\in \NN$.
A key role in our work will be played by the 
Dirichlet series
\begin{equation}\label{eq:F}
F(s)=\sum_{q
=1}^\infty q^{-s-3}
\sum_{\substack{a\bmod{q}\\ \gcd(a,q)=1}} 
\sum_{\b \in (\ZZ/q\ZZ)^3}
e_q(af(\b)),
\end{equation}
for $s\in \CC$ and  a given cubic polynomial $f\in \ZZ[x,y,z]$. 
It is easy to see that $F(s)$ is absolutely convergent for $\Re(s)>2$.
In this section we shall relate $F(s)$ to an infinite Euler product involving  the quantities
\begin{equation}\label{eq:nu}
\nu(p^k)=
\#\left\{ \x\in (\ZZ/p^k\ZZ)^3: f(\x)\equiv 0\bmod{p^k}\right\},
\end{equation}
for prime powers $p^k$. 
 The following result is standard but we include its proof for the sake of completeness.

\begin{lemma}\label{lem:F}
Assume that $\Re(s)>2$.
Then  
\[
F(s)=\prod_p \sigma_p(s),
\]
where
\[
\sigma_p(s)=
1+
\sum_{k=1}^\infty \frac{1}{p^{ks}}\left( \frac{\nu(p^k)}{p^{2k}}
-\frac{\nu(p^{k-1})}{p^{2(k-1)}}\right).
\]
If $p\not\in S$ then
\begin{equation}\label{eq:sigmap}
\sigma_p(s)
=1-\frac{1}{p^s}+\frac{\nu(p)}{p^{s+2}}.
\end{equation}
\end{lemma}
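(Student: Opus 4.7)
The plan is to exploit multiplicativity and reduce the identity to a standard Möbius-inversion calculation. Writing
$$T(q) = \sum_{\substack{a\bmod q\\ \gcd(a,q)=1}} \sum_{\b\in(\ZZ/q\ZZ)^3} e_q(af(\b))$$
so that $F(s)=\sum_{q\geq 1} T(q)/q^{s+3}$, I would first observe that the Chinese Remainder Theorem makes $T(q)$ multiplicative in $q$ (via the change of variables $\b\leftrightarrow(\b_1,\b_2)$ for coprime $q=q_1q_2$). This already produces the Euler product structure $F(s)=\prod_p \sigma_p(s)$ on the half-plane $\Re(s)>2$, with $\sigma_p(s)=\sum_{k\geq 0} T(p^k)/p^{k(s+3)}$.

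Next, to re-express $\sigma_p(s)$ in terms of $\nu(p^k)$, the key identity is
$$q\nu(q) = \sum_{a\bmod q}\ \sum_{\b\in(\ZZ/q\ZZ)^3} e_q(af(\b)) = \sum_{d\mid q} (q/d)^3\, T(d),$$
the first equality by orthogonality in $a$, the second by writing $a=(q/d)a'$ with $\gcd(a',d)=1$ and inflating the $\b$-sum from $(\ZZ/d\ZZ)^3$ to $(\ZZ/q\ZZ)^3$ at a cost of $(q/d)^3$. Dividing by $q^3$ and applying Möbius inversion gives $T(q)/q^3 = \sum_{d\mid q}\mu(q/d)\nu(d)/d^2$. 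Substituting into $F(s)$ and reindexing $q=de$ then yields
$$F(s) = \frac{1}{\zeta(s)}\sum_{d\geq 1}\frac{\nu(d)}{d^{s+2}},$$
with all manipulations absolutely convergent on $\Re(s)>2$ thanks to the Lang--Weil bound $\nu(d)\ll_\varepsilon d^{2+\varepsilon}$. Isolating the Euler factor at $p$ and expanding the product $(1-p^{-s})\sum_{k\geq 0}\nu(p^k)p^{-k(s+2)}$ produces a telescoping sum that matches the displayed formula for $\sigma_p(s)$ exactly.

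For the second assertion, I would invoke Hensel's lemma at primes $p\notin S$: smooth reduction of $\mX$ at $p$ ensures that every $\FF_p$-solution of $f=0$ lifts uniquely from level $k-1$ to level $k$ with a fibre of size $p^2$ (the tangent space to $U$ at a smooth point being two-dimensional), so $\nu(p^k)=p^{2(k-1)}\nu(p)$ for all $k\geq 1$. Consequently $\nu(p^k)/p^{2k}=\nu(p)/p^2$ is independent of $k\geq 1$, the contributions from $k\geq 2$ in $\sigma_p(s)$ cancel in pairs, and only the $k=1$ term survives, yielding $\sigma_p(s)=1-p^{-s}+\nu(p)p^{-s-2}$.

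No step is genuinely difficult: the proof is a routine assembly of CRT, additive orthogonality, Möbius inversion, and Hensel's lemma. The only point demanding any care is absolute convergence in the region $\Re(s)>2$, which follows immediately from the Lang--Weil estimate and legitimises the interchange of summations.
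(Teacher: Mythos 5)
Your proof is correct, and it arrives at the same identities by a slightly different route. The paper recognises the inner double sum $T(q)$ (its $S_q$) as $\sum_{\b}c_q(f(\b))$, a sum of Ramanujan sums, establishes multiplicativity, and then substitutes the explicit values of $c_{p^k}$ at prime powers to read off $\sigma_p(s)$ directly as a sum over $k$. You instead prove the divisor relation $q\,\nu(q)=\sum_{d\mid q}(q/d)^3T(d)$ by additive orthogonality, Möbius-invert to get $T(q)/q^3=\sum_{d\mid q}\mu(q/d)\nu(d)/d^2$, and then factor the whole Dirichlet series as $F(s)=\zeta(s)^{-1}\sum_d\nu(d)d^{-s-2}$ before isolating the local Euler factor. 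The two computations are equivalent --- the explicit prime-power values of the Ramanujan sum are exactly the content of your Möbius inversion --- but your global factorisation through $\zeta(s)^{-1}$ makes the telescoping structure of $\sigma_p(s)$ somewhat more transparent, at the cost of an extra rearrangement of the double series. Both arguments dispose of the second assertion identically, via Hensel's lemma at primes of good reduction. One minor remark: you do not need Lang--Weil; the trivial bound $\nu(d)\le d^3$ already gives absolute convergence of $\sum_d\nu(d)d^{-s-2}$ for $\Re(s)>2$, matching the paper's use of the trivial bound $|S_q|\le q^4$.
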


\begin{proof}
Since we are working with $s\in \CC$ such that  $\Re(s)>2$, the  infinite sum in $F(s)$
is absolutely  convergent. 
Define 
the exponential sum
$$S_q=
\sum_{\substack{a\bmod{q}\\ \gcd(a,q)=1}} 
\sum_{\b \in (\ZZ/q\ZZ)^3}e_q(af(\b))=
\sum_{\b \in (\ZZ/q\ZZ)^3} c_q(f(\b)),
$$
for $q\in \NN$,
where $c_q(\cdot)$ is the Ramanujan sum. Then $S_q$ is a multiplicative function of $q$ and so we obtain an Euler product
\[
F(s)=\prod_p \sigma_p(s),
\]
where
\[
\sigma_p(s)=\sum_{k=0}^\infty \frac{1}{p^{k(s+3)}} \sum_{\b \bmod{p^k}} c_{p^k}(f(\b)).
\]
Let $a\in \ZZ$. 
At prime powers the Ramanujan sum takes the values 
\begin{equation}\label{eq:ram}
c_{p^k}(a)=
\begin{cases}
0 & \text{ if $p^{k-1}\nmid a$,}\\
-p^{k-1} & \text{ if $p^{k-1}\mid a $ but $p^k\nmid a$,}\\
p^k-p^{k-1} & \text{ if $p^{k}\mid a$.}
\end{cases}
\end{equation}
It follows that 
$$
\sigma_p(s)
=1+
\sum_{k=1}^\infty \frac{ \nu(p^k)-p^{2}\nu(p^{k-1})}{p^{ks+2k}},
$$
as claimed in the first part of the lemma. Moreover, if $U$ is smooth and $p\not\in S$, then $p$ is a prime of good reduction and Hensel's lemma yields $\nu(p^k)=p^{2(k-1)}\nu(p)$ for $k\geq 1$.
The second part  easily follows.
\end{proof}

Lemma~\ref{lem:F} can be used to give a meromorphic continuation of $F(s)$, provided one has  enough information about $\nu(p)$ for large primes $p$. 
 Let $\tilde X_{\bar{\QQ}}=\tX\otimes_{\QQ} \bar \QQ$ and let $\Pic(\tilde X_{\bar{\QQ}})$ be the geometric Picard group of $\tX$. 
The global $L$-function that plays a role here is defined as an Euler product
 \begin{equation}\begin{split}\label{eq:globalL}
 L(s,\Pic(\tilde X_{\bar{\QQ}}))&=\prod_{p<\infty} L_p(s,\Pic(\tilde X_{\bar{\QQ}})) ,\\
 L_p(s,\Pic(\tilde X_{\bar{\QQ}}))&=\det\left(1-p^{-s}\Fr_p\mid (\Pic(\tilde X_{\bar{\QQ}})\otimes \QQ)^{I_p}\right)^{-1},
 \end{split}
 \end{equation}
 where $\Re(s)>1$, $\Fr_p$ is a geometric Frobenius element, and $I_p$ is an inertia subgroup at $p$. Let   $\rho_{\tX}$ be the rank of the Picard group $\Pic(\tX)$.
  Then, as described by Peyre~\cite[Sec.~2.1]{peyre-duke},
$L(s,\Pic(\tilde X_{\bar{\QQ}}))$  is an Artin 
 $L$-function which has a meromorphic continuation to the whole complex plane, with a pole of order $\rho_{\tX}$ at $s=1$.

Bearing this notation in mind, we will need to examine $\nu(p)$ carefully.
Note that
\[
\nu(p)
= \# \mU(\FF_p)
=\#\mtX(\FF_p)-\#\mtD(\FF_p).
\]
For all sufficiently large primes, the Hasse--Weil bound implies that 
\begin{equation}\label{eq:horse-remark}
\frac{\#\mtD(\FF_p)}{p^2}=\frac{b_p(\tD)}{p}+O(p^{-3/2}),
\end{equation}
where $b_p(\tD)$
 is the number of irreducible components 
of $\tD$ of maximal dimension which are fixed under the Frobenius automorphism 
$\Fr_p\in \Gal(\bar\FF_p/\FF_p)$. 
In particular, $0\le b_p(\tD)\le r_{\tD}$,
where
$r_{\tD}$ is the number of irreducible components of $\tD$ as a divisor over $\QQ$. 

Next, as described by Manin~\cite[Thm.~23.1]{M}, 
a result of 
Weil yields
\begin{equation}\label{eq:weil}
\frac{\# \mtX(\FF_p)}{p^2}=1+\frac{a_p(\tX)}{p}+\frac{1}{p^2},
\end{equation}
where $a_p(\tX)$ is the trace of the Frobenius element $\Fr_p$
acting on the Picard group 
$\Pic(\mtX_{\bar \FF_p})$, which is isomorphic to $\Pic( \tilde X_{\bar{\QQ}})$ for almost all primes by \cite[Lem.~2.2.1]{peyre-duke}. We note that
$a_p(\tX)$ is bounded independently of $p$,
by Deligne's resolution of the Weil conjectures.
Hence
\[
\frac{\nu(p)}{p^2} =1+\frac{a_p(\tX)-b_p(\tD)}{p}+O(p^{-3/2}).
\]
Returning to the Dirichlet series $F(s)$, it therefore follows from applying 
this in  Lemma~\ref{lem:F} that 
\begin{equation}\label{eq:goat}
\sigma_p(s)=
1-\frac{1}{p^s}+\frac{\nu(p)}{p^{s+2}}=
1+
\frac{a_p(\tX)-b_p(\tD)}{p^{s+1}}+O(p^{-\Re(s)-3/2}),
\end{equation}
for any $p\not\in S$. 
In particular $\sigma_p(s)=1+O(p^{-\Re(s)-1})$,
for any $p\not\in S$, and so  $F(s)$ is an absolutely convergent Euler product for 
$\Re(s)>0$.

We can relate the analytic properties of $F(s)$ to those of the global 
$L$-function introduced in~\eqref{eq:globalL}.
For  $s\in \CC$ with $\Re(s)>-1/2$ and sufficiently large primes, we find that 
\[
  L_p(s,\Pic(\tilde X_{\bar{\QQ}}))^{-1}=1-\frac{a_p(\tX)}{p^{s}}+\frac{1}{p^{s+1}}.
\]
Since $\Re(s)>-1/2$,  we deduce from \eqref{eq:goat} that 
\[
\sigma_p(s)=
 L_p(s+1,\Pic(\tilde X_{\bar{\QQ}}))
 \left(1-\frac{b_p(\tD)}{p^{s+1}}\right)
  \left(1+O(p^{-\Re(s)-3/2})\right).
\]
Let us define
another Euler product
\begin{align*}
\zeta(s,\tD)&=\prod_{p} \zeta_p(s,\tD), 
\quad 
\zeta_p(s,\tD) =
\begin{cases}
\left(1-\frac{b_p(\tD)}{p^{s}}\right)^{-1} 
&
\text{ if $p>r_{\tD}$,}\\
1 & \text{ otherwise,}
\end{cases}
\end{align*}
for $\Re(s)>1$.
This has a meromorphic continuation to the whole complex plane with a pole  at $s=1$.

In conclusion, our work  shows that there is a function $\tilde F(s)$ which is holomorphic in the half-plane
$\Re(s)>-1/2$, such that
\begin{equation}\label{eq:peter}
F(s)= L(s+1,\Pic( \tX_{\bar{\QQ}}))\zeta(s+1,\tD)^{-1}\tilde F(s).
\end{equation}
An expression like this is essentially implied by work of Chambert-Loir 
and Tschinkel \cite[Thm.~2.5]{ACL} on 
convergence factors on adelic spaces, but we have chosen to include our own 
deduction for the sake of completeness and to  deal explicitly with $F$ as a function in $s$.
In particular, we have  used factors associated with the easier zeta function $\zeta(s+1,\tD)$, rather than  $L(s+1,\CH^0(\tD_{\overline{\QQ}}))$.

\begin{proposition}\label{prop:local-densities-convergence-factors}
Assume that $\mU(\ZZ)$ is Zariski dense.
Then the function $F(s)$ has a meromorphic continuation to the half-plane $\Re(s)>-1/2$ with a singularity at $s=0$ of order $\rho_U$.    Moreover,
letting 
$\sigma_p = \lim_{k\to\infty}p^{-2k}\nu(p^k)$, we have 
\[
  \lim_{s\to 0} \left(s^{\rho_U} F(s)\right) =
  \lambda_0 \prod_{p} \lambda_p \sigma_p,
\]
where
\[
  \lambda_0 = \lim_{s\to 0} s^{\rho_U} \frac{L(s+1,\Pic(\tilde X_{\bar{\QQ}}))}{\zeta(s+1,\tD)}
\quad
\text{  and }
\quad
  \lambda_p = 
  \zeta_p(1,\tD)L_p(1,\Pic(\tilde X_{\bar{\QQ}}))^{-1}.
\]
\end{proposition}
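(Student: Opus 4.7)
My starting point is the factorisation~\eqref{eq:peter} established just before the statement, namely $F(s) = L(s+1,\Pic(\tX_{\bar{\QQ}}))\,\zeta(s+1,\tD)^{-1}\,\tilde F(s)$, together with the fact that $\tilde F$ is holomorphic on $\Re(s) > -1/2$. Since $L(\,\cdot\,,\Pic(\tX_{\bar{\QQ}}))$ is an Artin $L$-function and $\zeta(\,\cdot\,,\tD)$ differs, up to finitely many Euler factors, from the Artin $L$-function of the permutation representation of $\Gal(\bar{\QQ}/\QQ)$ on the geometrically irreducible components of $\tD$, both admit a meromorphic continuation to $\CC$. Feeding this back into~\eqref{eq:peter} extends $F$ meromorphically to $\Re(s) > -1/2$ with the only possible singularity at $s=0$.

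To identify the order of that singularity, I would read off from the two Artin $L$-functions that $L(s+1,\Pic(\tX_{\bar{\QQ}}))$ has a pole at $s=0$ of order $\rho_{\tX}$, whereas $\zeta(s+1,\tD)$ has a pole at $s=0$ of order equal to the number $m_{\tD}$ of $\Gal(\bar{\QQ}/\QQ)$-orbits on the geometric components of $\tD$ (i.e.\ the number of $\QQ$-irreducible components). Hence $F(s)$ has a singularity at $s=0$ of order $\rho_{\tX}-m_{\tD}$. I would then invoke the exact sequence
\[
\bigoplus_{D_i\subset\tD_{\bar{\QQ}}} \ZZ D_i \longrightarrow \Pic(\tX_{\bar{\QQ}}) \longrightarrow \Pic(U_{\bar{\QQ}}) \longrightarrow 0,
\]
pass to $\Gal(\bar{\QQ}/\QQ)$-invariants, and compare ranks, using the Zariski-density hypothesis to rule out corrections coming from functions in $\QQ(\tX)^*$ whose divisor is supported on $\tD$. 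This gives $\rho_{\tX}-m_{\tD}=\rho_U$, matching the claimed pole order.

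For the leading coefficient I would separate the non-local factor by writing
\[
\lim_{s\to 0} s^{\rho_U} F(s) = \Bigl(\lim_{s\to 0} s^{\rho_U}\frac{L(s+1,\Pic(\tX_{\bar{\QQ}}))}{\zeta(s+1,\tD)}\Bigr)\tilde F(0) = \lambda_0\,\tilde F(0),
\]
and then decompose $\tilde F(0)$ as an Euler product. Solving~\eqref{eq:peter} for $\tilde F$ and inserting the Euler products of $F$, $L$ and $\zeta$ yields
\[
\tilde F(s) = \prod_p \sigma_p(s)\, L_p(s+1,\Pic(\tX_{\bar{\QQ}}))^{-1}\,\zeta_p(s+1,\tD),
\]
so that $\tilde F(0)=\prod_p \sigma_p(0)\,\lambda_p$ with $\lambda_p$ as in the statement. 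The series for $\sigma_p(s)$ in Lemma~\ref{lem:F} telescopes at $s=0$ to $\sigma_p(0)=\lim_{k\to\infty}\nu(p^k)/p^{2k}=\sigma_p$, and the absolute convergence of $\prod_p \lambda_p\sigma_p$ follows from~\eqref{eq:goat}, since the factors $\lambda_p$ have been designed precisely so that $\lambda_p\sigma_p = 1+O(p^{-3/2})$ for $p\notin S$.

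The main obstacle I foresee is the second step: the identification $\rho_{\tX}-m_\tD=\rho_U$ requires careful bookkeeping of how the geometric divisor sequence behaves under Galois descent, and it is precisely here that the hypothesis that $\mU(\ZZ)$ is Zariski dense needs to be leveraged. Everything else amounts to extracting local factors from the already-established factorisation~\eqref{eq:peter}.
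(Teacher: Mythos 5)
Your plan reaches the correct conclusion and follows the same overall architecture as the paper's proof: start from~\eqref{eq:peter}, read off continuation and pole orders of the two Euler products, identify $\rho_{\tX}-r_{\tD}=\rho_U$ via a localisation-type argument using Zariski density, and decompose $\tilde F(0)=\prod_p\lambda_p\sigma_p$. The one place where you genuinely diverge from the paper is the analytic treatment of $\zeta(s,\tD)$: you compare it with the Artin $L$-function of the permutation representation on the geometric components of $\tD$, whereas the paper counts $\#\mtD(\FF_p)$ on average via Serre's equidistribution theorem, Abel summation and the prime number theorem. Your route is cleaner and more conceptual, but the phrase ``differs up to finitely many Euler factors'' is not right as stated: the local Artin factor is $\prod_{O}(1-p^{-|O|s})^{-1}$ over Frobenius-orbits $O$, while $\zeta_p(s,\tD)=(1-b_p(\tD)p^{-s})^{-1}$, and these differ at \emph{every} prime where Frobenius acts nontrivially on the components. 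What actually holds — and what you need — is that the quotient is an Euler product with factors $1+O(p^{-2\sigma})$, hence absolutely convergent for $\Re(s)>\tfrac12$; that still transports the continuation and the pole order at $s=1$ across the comparison. Your geometric localisation sequence plus Galois invariants also delivers the rank identity, but note that Zariski density of $\mU(\ZZ)$ only directly kills $\QQ[U]^\ast/\QQ^\ast$, which by Hilbert~90 equals $\left(\bar\QQ[U]^\ast/\bar\QQ^\ast\right)^{\Gal(\bar\QQ/\QQ)}$; the geometric kernel of $\bigoplus_i\ZZ D_i\to\Pic\tX_{\bar\QQ}$ need not vanish, only its invariant rank does, so after taking invariants one must split the four-term sequence in two, use that $H^1$ of a permutation $\ZZ$-module vanishes, and track that all other correction terms are torsion. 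The paper's $\QQ$-rational version $\CH_0(\tD)\to\Pic\tX\to\Pic U\to 0$ sidesteps this bookkeeping entirely.
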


\begin{proof}
Our starting point is
the observation that 
$
\sigma_p(0)=\sigma_p
$
in Lemma \ref{lem:F}. 
Recall that $L(s,\Pic(\tilde X_{\bar{\QQ}}))$ has a pole of order $\rho_{\tX}$ at $s=1$. Moreover, we claim that  $\zeta(s,\tD)$ has a 
meromorphic continuation to the region $\Re(s)>-1/2$
with a
pole of order $r_{\tD}$ at $s=1$,
  where  $r_{\tD}$ is the number of irreducible components of $\tD$ as a divisor over $\QQ$. 
  It follows from 
\eqref{eq:horse-remark} that 
$$
\zeta(s,\tD)=\prod_{p>r_{\tD}} \left(1-
\frac{\#\mtD(\FF_p)}{p^{s+1}}\right)^{-1}\left(1+O(p^{-\sigma-1/2})\right),
$$
where $\sigma=\Re(s)>1$. 
Taking logarithms of both sides, it follows that 
$$
\log \zeta(s,\tD)= \sum_{p>r_{\tD}} \frac{\#\mtD(\FF_p)}{p^{s+1}} +O_\sigma(1),
$$
where the implied constant depends on $\sigma$.
According to work of 
Serre
    \cite[Cor.~7.13]{serre-1}, we have 
$$
\sum_{p\le x} \#\mtD(\FF_p) = \frac{r_{\tD} x}{\log x} \left(1+ O\left(\frac{1}{\log x}\right)\right).
$$
But then, for any $1\leq y<x$, we may combine  this with Abel summation  to deduce that 
$$
\sum_{y<p\leq x} \frac{\#\mtD(\FF_p)}{p^{s+1}} =\frac{(s+1)r_{\tD}}{2}\int_y^x \frac{\d u}{u^s \log u}  +O_\sigma\left(\frac{1}{\log y}\right).
$$
Similarly, it follows from the prime number theorem that 
$$
\int_y^x \frac{\d u}{u^s \log u}   =
 \frac{1}{s}\sum_{y<p\leq x} \frac{1}{p^{s}} +O_\sigma\left(\frac{1}{\log y}\right)
=
-\frac{1}{s}\sum_{y<p\leq x} \log\left(1-\frac{1}{p^{s}}\right) +O_\sigma\left(\frac{1}{\log y}\right),
$$
for $\sigma> 1$. Hence, we obtain
$\zeta(s,\tD)=\zeta(s)^\alpha G(s)$ in the region $\sigma> 1$, where $G(s)$ is holomorphic  
in the region $\sigma>-1/2$ 
and 
$\alpha=(s+1)r_{\tD}/(2s)$.     This therefore establishes the claim.

    It follows from \eqref{eq:peter} that $F(s)$ has a meromorphic 
    continuation to the region $\Re(s)>-1/2$
    with a
   pole of order $\rho_{\tX} - r_{\tD}$ at $s=0$.  
    If the set of integral points on $\mU$ is Zariski dense then,  
    as explained in the proof of \cite[Thm.~2.4.1(ii)]{wilsch}, 
    $U$ cannot have invertible regular functions inducing a relation between the components of $\tD$ in $\Pic \tX$. Thus the left morphism in the localisation sequence $\CH_0(\tD)\to\Pic \tX\to\Pic U\to 0$ is injective and it follows that $ \rho_{\tX} - r_{\tD}=\rho_U$. 
\end{proof}


%
%

\subsection{A smooth $\delta$-function}

We now come to record the version of the  smooth $\delta$-function
that we shall use in our analysis.  Let
$$
\delta(n)=\begin{cases}
1 & \text{ if $n=0$,}\\
0 & \text{ if $n\in \ZZ$ and $n\neq 0$.}
\end{cases}
$$
A  smooth interpretation of this $\delta$-function goes back to work of 
Duke, Friedlander and Iwaniec~\cite{DFI}, but was  developed for Diophantine equations by 
Heath-Brown~\cite[Thm.~1]{HB'}.  The version recorded below is essentially due to 
Tran ~\cite{tran}, but we have elected to reprove it here, since Tran is missing a factor $2$ in his statement.

\begin{proposition}\label{prop:tran}
Let $\Phi:\RR^2\to \RR$ be a Schwartz function satisfying the hypotheses
\begin{enumerate}
\item[(i)]
 $\Phi(-x,-y)=\Phi(x,y)$ for all $x,y\in \RR$,
\item[(ii)] $\Phi(x,0)=0$ for all $x\in \RR$,
\item[(iii)]$\int_{-
\infty}^\infty \Phi(0,y)\d y=1.
$
\end{enumerate}
Then for any $n\in \ZZ$ and sufficiently large $Q$, there exists $c_Q>0$ such that
\[
\delta(n)=
\frac{2c_Q}{Q} \sum_{q=1}^\infty \frac{1}{q} \sum_{a\bmod{q}} 
e\left(\frac{an}{q}\right) 
h\left(\frac{n}{qQ}, \frac{q}{Q}\right),
\]
where $h(x,y)=\Phi(x,y)-\Phi(y,x)$. Moreover, 
$
  c_Q=1+O_N(Q^{-N})
$ 
for any $N\geq 1$.
\end{proposition}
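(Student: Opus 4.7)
The strategy is to collapse the $a$-sum by orthogonality and then check the two sides agree for $n=0$ (which defines $c_Q$) and for $n \neq 0$ (where everything must cancel by symmetry). Specifically, the inner sum over residues $a \bmod q$ of the unweighted additive character gives
\[
\sum_{a\bmod q} e\!\left(\frac{an}{q}\right) = q \cdot \mathbf{1}_{q\mid n},
\]
so the right-hand side of the claimed identity collapses to
\[
\frac{2 c_Q}{Q} \sum_{\substack{q\geq 1 \\ q\mid n}} h\!\left(\frac{n}{qQ}, \frac{q}{Q}\right).
\]
It remains to show this equals $\delta(n)$ for a suitable $c_Q = 1 + O_N(Q^{-N})$.

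For $n=0$, every $q\geq 1$ contributes, and by hypothesis (ii) we have $\Phi(q/Q, 0) = 0$, so $h(0, q/Q) = \Phi(0, q/Q)$. This fixes the definition
\[
c_Q \;=\; \frac{Q}{2\sum_{q\geq 1}\Phi(0, q/Q)}.
\]
To verify $c_Q = 1 + O_N(Q^{-N})$, I would apply Poisson summation to the even Schwartz function $y\mapsto \Phi(0,y)$ (evenness follows from (i) with $x=0$): the Fourier transform is Schwartz, so nonzero frequencies contribute $O_N(Q^{-N})$, and the zero frequency yields $Q\int_{\RR}\Phi(0,y)\,\d y = Q$ by (iii). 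Subtracting the $q=0$ term $\Phi(0,0)=0$ (again by (ii)) and halving gives $\sum_{q\geq 1}\Phi(0,q/Q) = Q/2 + O_N(Q^{-N})$, from which the claim about $c_Q$ follows; in particular $c_Q>0$ for $Q$ sufficiently large.

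For $n\neq 0$, write $n = \pm m$ with $m = |n|>0$ and set
\[
S(n) = \sum_{q \mid m,\, q\geq 1} \left[\Phi\!\left(\frac{n}{qQ}, \frac{q}{Q}\right) - \Phi\!\left(\frac{q}{Q}, \frac{n}{qQ}\right)\right].
\]
When $n>0$, the involution $q\mapsto n/q$ on positive divisors exchanges the two $\Phi$-terms, giving $S(n) = -S(n)$, hence $S(n)=0$. When $n<0$, write the two summands as $\Phi(-m/(qQ), q/Q)$ and $\Phi(q/Q, -m/(qQ))$ and apply (i) in the form $\Phi(-x, y) = \Phi(x, -y)$ to rewrite them as $\Phi(m/(qQ), -q/Q)$ and $\Phi(-q/Q, m/(qQ))$. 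Averaging $S(-m)$ against its image under the involution $q\mapsto m/q$ and using this substitution, each pair of terms cancels in the resulting sum, again yielding $S(-m)=0$.

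The only genuinely delicate point is the $n<0$ case, where the sign of the divisor pair $(q,n/q)$ is mixed, so the bare involution does not suffice; the joint-parity hypothesis (i) is exactly what is needed to complete the cancellation. The other ingredients---orthogonality, the Schwartz decay behind Poisson summation, and the vanishing hypothesis (ii)---are routine.
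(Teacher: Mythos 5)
Your proof is correct and takes essentially the same approach as the paper's: orthogonality of additive characters collapses the $a$-sum to the condition $q\mid n$, the divisor involution $q\mapsto |n|/q$ together with hypotheses (i) and (ii) reduces the divisor sum to $\delta(n)\sum_{q\geq 1}\Phi(0,q/Q)$, and Poisson summation via (iii) shows this equals $Q/2 + O_N(Q^{-N})$, which fixes $c_Q$. Your treatment of the $n<0$ case is slightly more explicit than the paper's, which simply observes that ``$n/q$ runs over all divisors of $n$ as $q$ does'' without addressing the sign, but the underlying mechanism is identical.
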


\begin{proof}
Since $n/q$ runs over all divisors of $n$ as $q$ does, we are easily led to the expression
$$
\sum_{\substack{q\in \NN\\ q\mid n}} \left( \Phi\left(\frac{n}{qQ}, \frac{q}{Q}\right) -
\Phi\left(\frac{q}{Q}, \frac{n}{qQ}\right)\right)=\delta(n)\sum_{q\in \NN}\Phi\left(0,\frac{q}{Q}\right),
$$
by (ii). We take $Q$ large enough to ensure  that the point $(0,1/Q)$ is contained in the support of $\Phi$, so that the right hand side doesn't vanish.
It follows from (i) and Poisson summation  that 
$$
\sum_{q\in \NN}\Phi\left(0,\frac{q}{Q}\right)=\frac{1}{2}
\sum_{q\in \ZZ}\Phi\left(0,\frac{q}{Q}\right)=\frac{1}{2}\sum_{c\in \ZZ} \int_{-\infty}^\infty 
\Phi\left(0,\frac{t}{Q}\right) e(-ct) \d t.
$$
Part (iii) 
shows that  the inner integral is $Q$ when $c=0$. On the other hand, repeated integration by parts shows that the  integral is $O_N(Q(Q|c|)^{-N})$ when $c\neq 0$. Defining $c_Q$ via 
$$
\sum_{q\in \NN}\Phi\left(0,\frac{q}{Q}\right)
=c_Q^{-1}\frac{Q}{2},
$$
we deduce that $c_Q=1+O_N(Q^{-N})$ and 
$$
\delta(n)
=\frac{2c_Q}{Q} 
\sum_{\substack{q\in \NN\\ q\mid n}} h\left(\frac{n}{qQ}, \frac{q}{Q}\right).
$$
The proposition follows on using additive characters to detect the condition $q\mid n$.
\end{proof}

The main difference between  Proposition \ref{prop:tran} and the version  in Heath-Brown~\cite[Thm.~1]{HB'} is that one has a sum over all additive characters, rather than just over primitive characters. We note that the function 
\begin{equation}\label{eq:choice}
\Phi(x,y)=\frac{e^{-x^2}(e^{-y^2}-e^{-2y^2})}{\sqrt{\pi}(1-2^{-1/2})}
\end{equation}
is a Schwartz function  that  clearly satisfies the conditions (i)--(iii) in 
Proposition \ref{prop:tran}.

\subsection{Application of the circle method}

Let  $w:\RR^3\to \RR_{\geq 0}$ be a 
compactly supported smooth weight
function. 
Rather than studying $N_U(B)$, we 
shall begin by considering the weighted counting function
\[
N_{U}(B,w)=\sum_{\substack{\x\in \ZZ^3\\ f(\x)=0}} w(B^{-1}\x),
\]
as $B\to \infty$. As is well-known, on assuming a reasonable dependence on $w$
in all the error terms, it is   possible to approximate the characteristic function 
of $[-B,B]^3$ by suitable weight functions to deduce the asymptotic behaviour of $N_U(B)$, as $B\to \infty$.

Let $\Phi:\RR^2\to \RR$ be the function \eqref{eq:choice}, which satisfies the hypotheses (i)--(iii) in Proposition \ref{prop:tran}. Let $h(x,y)=\Phi(x,y)-\Phi(y,x)$.
Then  it follows from this result that 
\[
N_U(B,w)=\frac{2c_Q}{Q} \sum_{q=1}^\infty \frac{1}{q} \sum_{a\bmod{q}} 
\sum_{\x\in \ZZ^3}w(B^{-1}\x)
e_q(af(\x)) 
h\left(\frac{f(\x)}{qQ}, \frac{q}{Q}\right),
\]
where 
$c_Q=1+O_N(Q^{-N})$.
Breaking the sum over $\x$ into residue classes modulo $q$ and applying the $3$-dimensional Poisson summation formula, one readily obtains
\[
N_U(B,w)=\frac{2c_Q}{Q} 
\sum_{\c\in \ZZ^3}
\sum_{q=1}^\infty q^{-4}S_q(\c)
\int_{\RR^3}
w(B^{-1}\x)
h\left(\frac{f(\x)}{qQ}, \frac{q}{Q}\right)e_q(-\c.\x) \d\x,
\]
where
\[
S_q(\c)=
\sum_{a\bmod{q}} \sum_{\b \bmod{q}}
e_q(af(\b)+\c.\b) .
\]
Since $h(x,y)$ is a  Schwartz function, we expect that only 
$q,\x$ with 
$q\ll Q$  and $f(\x)\ll Q^2$ make a dominant contribution.
Moreover, the integrand is zero unless 
$w(B^{-1}\x)\neq 0$ and it can be shown that $|f(\x)|$ has exact order of magnitude $B^3$ for  typical such  $\x$. In this way we are led to make the choice $Q=B^{3/2}$ in our analysis. 
(In fact, one can  take $Q=cB^{3/2}$ for any constant $c>0$ without affecting  
the heuristic main term, while taking $Q=B^\theta$ for  $\theta>\frac{3}{2}$ would 
cause   problems in the analysis of the oscillatory integral.)

Our circle method heuristic arises from 
asymptotically evaluating the contribution from 
the trivial character, corresponding to $\c=\0$. (In fact, there is evidence to suggest 
that the contribution from possible accumulating subvarieties is  accounted for by  the non-trivial characters, as discussed by Heath-Brown~\cite{royal} for  diagonal cubic surfaces in $\PP^3$.)
For our heuristic, we shall take $\c=\0$ and $c_Q=1$, leaving us to  estimate 
\begin{equation}\label{eq:M-step1}
M(B,w)=\frac{2 }{Q} \sum_{q
=1}^\infty q^{-4}S_q(\0)
\int_{\RR^3}
w(B^{-1}\x)
h\left(\frac{f(\x)}{qQ}, \frac{q}{Q}\right)\d\x,
\end{equation}
with  $Q=B^{3/2}$.
This will eventually be achieved in Theorem~\ref{thm:trivial-character}.

Let 
$$D(s)=\sum_{q
=1}^\infty q^{-s-4}S_q(\0),
$$
for $s\in \CC$. In view of the trivial bound $|S_q(\0)|\leq q^4$, this is absolutely convergent for $\Re(s)>1$.  We proceed by proving the following result. 

\begin{lemma}\label{lem:Ds}
Let $\Re(s)>1$. Then we have 
\[
D(s)=F(s+1)\zeta(s+1),
\]
where  $F(s)$ is given by~\eqref{eq:F}.
\end{lemma}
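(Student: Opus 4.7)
The plan is to establish the identity by a Dirichlet-convolution manipulation of the sum over $a$ in $S_q(\mathbf{0})$. The key observation is that $F(s)$ involves a coprimality restriction $\gcd(a,q)=1$, whereas $S_q(\mathbf{0})$ has an unrestricted sum over $a \bmod q$. Peeling off the common factor of $a$ and $q$ will produce the extra $\zeta(s+1)$.

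Concretely, I would first regroup the sum defining $S_q(\mathbf{0})$ according to $d=\gcd(a,q)$. Writing $a=da'$ with $e=q/d$ and $\gcd(a',e)=1$, one has $e_q(af(\mathbf{b}))=e_e(a'f(\mathbf{b}))$. The exponential now depends on $\mathbf{b}$ only through its residue modulo $e$, so the innermost sum over $\mathbf{b} \bmod q$ becomes $d^3$ times the sum over $\mathbf{b} \bmod e$. Introducing
\[
T_e = \sum_{\substack{a'\bmod e \\ \gcd(a',e)=1}} \sum_{\mathbf{b}\bmod e} e_e(a' f(\mathbf{b})),
\]
this yields the Dirichlet convolution
\[
S_q(\mathbf{0}) = \sum_{d\mid q} d^3\, T_{q/d}.
\]

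The second step is to insert this into $D(s)$ and interchange the order of summation. For $\Re(s)>1$ everything in sight is absolutely convergent (crudely, $|T_e|\le e^4$ and $|S_q(\mathbf{0})|\le q^4$), so the rearrangement is legitimate. Substituting $q=de$ gives
\[
D(s) = \sum_{d=1}^\infty \sum_{e=1}^\infty (de)^{-s-4} d^3\, T_e = \Bigl(\sum_{d=1}^\infty d^{-s-1}\Bigr)\Bigl(\sum_{e=1}^\infty e^{-s-4} T_e\Bigr).
\]
The first factor is $\zeta(s+1)$, and by inspection of the definition \eqref{eq:F}, the second factor is precisely $F(s+1)$, which yields the claimed identity.

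There is no serious obstacle; the only thing to be careful about is the verification that reducing $\mathbf{b}\bmod q$ to $\mathbf{b}\bmod e$ contributes exactly the factor $d^3=(q/e)^3$ (which uses that $\mathbf{b}$ runs over three coordinates), and that the absolute convergence in the half-plane $\Re(s)>1$ justifies the unrestricted rearrangement of the double sum.
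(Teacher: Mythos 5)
Your proof is correct and follows essentially the same route as the paper: both decompose the unrestricted sum over $a \bmod q$ by the value of $\gcd(a,q)$, reduce $\mathbf{b}$ from modulo $q$ to modulo the cofactor (picking up the cube of the gcd from the three coordinates), and then separate the double sum into $\zeta(s+1)$ times $F(s+1)$. The only differences are notational (your $d,e$ versus the paper's $r,q'$), and your absolute-convergence remark justifying the rearrangement is the same standard observation the paper relies on.
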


\begin{proof}
Let $s\in\CC$ such that $\Re(s)>1$, so that  $D(s)$ is absolutely convergent. 
Breaking the sum according to the greatest common divisor of $a$ and $q$, we find 
that 
\[
D(s)=\sum_{q=1}^\infty q^{-s-4}\sum_{r\mid q} 
\sum_{\substack{a\bmod{q}\\ \gcd(a,q)=r}} 
\sum_{\b \bmod{q}}
e_q(af(\b)).
\]
Making the change of variables $q=rq'$ and $a=ra'$, one  concludes that 
\[
D(s)=\sum_{q'=1}^\infty\sum_{r=1}^\infty {q'}^{-s-4}r^{-s-4}
\sum_{\substack{a'\bmod{q'}\\ \gcd(a',q')=1}} 
r^3
\sum_{\b \bmod{q'}}
e_{q'}(a'f(\b)).
\]
The statement of the lemma is now obvious. 
\end{proof}

An application of the  Mellin inversion theorem yields
\begin{align*}
&\frac{1}{2\pi i}\int_{2-i \infty}^{2+i\infty} D(s) \int_0^\infty \int_{\RR^3}
w(B^{-1}\x)
h\left(\frac{f(\x)}{yQ}, \frac{y}{Q}\right)\d\x y^{s-1} \d y \d s \\
&=\sum_{q=1}^\infty q^{-4}S_q(\0) \cdot 
\frac{1}{2\pi i} \int_{2-i \infty}^{2+i\infty}  \left(\int_0^\infty \int_{\RR^3}
w(B^{-1}\x)
h\left(\frac{f(\x)}{yQ}, \frac{y}{Q}\right)y^{s-1}\d\x \d y \right)\frac{\d s}{q^s} \\
&=\sum_{q=1}^\infty q^{-4}S_q(\0) 
 \int_{\RR^3}
w(B^{-1}\x)
h\left(\frac{f(\x)}{qQ}, \frac{q}{Q}\right)\d\x,
\end{align*}
which we recognise as appearing in our expression for $M(B,w)$. 
Replacing $y/Q$ by $y$ 
and $\x$ by $B\x$,
we obtain
\begin{align*}
\int_0^\infty \int_{\RR^3}
&w(B^{-1}\x)
h\left(\frac{f(\x)}{yQ}, \frac{y}{Q}\right)y^{s-1}\d\x \d y \\
&=B^{3}Q^s
\int_0^\infty \int_{\RR^3}
w(\x)
h\left(\frac{g(\x)}{y}, y\right)y^{s-1}\d\x \d y ,
\end{align*}
where
$g(\x)=Q^{-2}f(B\x)$. (Note that this coincides with the definition \eqref{eq:def-g}, since  $Q=B^{3/2}$.)
Returning to~\eqref{eq:M-step1}, 
it follows from Lemma \ref{lem:Ds} that 
\begin{equation}\label{eq:main}
M(B,w)=\frac{2Q^{-1}B^3 }{2\pi i} 
\int_{2-i \infty}^{2+i\infty} F(s+1)\zeta(s+1) G(s) Q^s\d s ,
\end{equation}
where
$$
G(s)=
\int_0^\infty \int_{\RR^3}
w(\x)
h\left(\frac{g(\x)}{y}, y\right)y^{s-1}\d\x \d y.
$$
We seek to obtain a meromorphic continuation of $G(s)$ sufficiently far to  the left of the line $\Re(s)=2$.

Let 
\begin{equation}\label{eq:def-katy}
k_t(y)=\int_{-\infty}^\infty h(x,y) e(-txy)\d x,
\end{equation}
for any $t,y\in \RR$.  The following result evaluates this integral.
 
\begin{lemma}\label{lem:hatk}
We have 
$$
k_t(y)=\frac{1}{1-2^{-1/2}}
\left(2^{-1/2}e^{-\frac{1}{2}y^2(2+\pi^2t^2)}-
e^{-y^2(2+\pi^2t^2)}\right).
$$
In particular $k_{\ve_1 t}(\ve_2 y)=k_{t}(y)$ for $\ve_1,\ve_2\in \{\pm 1\}$.
\end{lemma}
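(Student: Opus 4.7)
The plan is to compute the two Fourier transforms in $x$ that constitute $k_t(y)$ separately, using the standard Gaussian identity
\[
\int_{-\infty}^{\infty} e^{-\alpha x^2} e(-\beta x) \, \d x = \sqrt{\pi/\alpha}\, e^{-\pi^2 \beta^2/\alpha}, \qquad \alpha > 0, \ \beta \in \RR,
\]
which follows by completing the square. Since $h(x,y) = \Phi(x,y) - \Phi(y,x)$ and $\Phi$ factorises as in \eqref{eq:choice}, one has
\[
k_t(y) = \int_{-\infty}^{\infty} \Phi(x,y) e(-txy)\, \d x - \int_{-\infty}^{\infty} \Phi(y,x) e(-txy)\, \d x,
\]
so the problem reduces to two essentially one-dimensional Gaussian integrals with frequency $\beta = ty$.

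For the first integral, the $y$-dependent factor $e^{-y^2} - e^{-2y^2}$ pulls out and applying the identity with $\alpha = 1$ gives
\[
\int_{-\infty}^{\infty} \Phi(x,y) e(-txy)\, \d x = \frac{1}{1 - 2^{-1/2}}\left(e^{-y^2(1+\pi^2 t^2)} - e^{-y^2(2+\pi^2 t^2)}\right).
\]
For the second, the factor $e^{-y^2}$ is constant in $x$, and applying the identity with $\alpha = 1$ and $\alpha = 2$ respectively yields
\[
\int_{-\infty}^{\infty} \Phi(y,x) e(-txy)\, \d x = \frac{e^{-y^2}}{1 - 2^{-1/2}}\left(e^{-\pi^2 t^2 y^2} - 2^{-1/2} e^{-\pi^2 t^2 y^2/2}\right),
\]
where the factor $2^{-1/2}$ comes from $\sqrt{\pi/2}/\sqrt{\pi}$.

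Subtracting, the terms $e^{-y^2(1+\pi^2 t^2)}$ cancel, leaving
\[
k_t(y) = \frac{1}{1 - 2^{-1/2}}\left(2^{-1/2} e^{-y^2(1+\pi^2 t^2/2)} - e^{-y^2(2+\pi^2 t^2)}\right),
\]
and rewriting $1 + \pi^2 t^2/2 = \tfrac{1}{2}(2 + \pi^2 t^2)$ gives the claimed formula. The invariance $k_{\varepsilon_1 t}(\varepsilon_2 y) = k_t(y)$ is immediate from the final expression, since it depends on $t$ and $y$ only through $t^2$ and $y^2$. The only real obstacle is bookkeeping: keeping track of the $2^{-1/2}$ factors and verifying the cancellation cleanly, which is routine.
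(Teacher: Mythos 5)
Your proof is correct and essentially the same as the paper's: both reduce $k_t(y)$ to Gaussian integrals via the standard Fourier transform of $e^{-\alpha x^2}$ and observe the cancellation. The only difference is bookkeeping order — the paper simplifies $h(x,y) = \Phi(x,y) - \Phi(y,x)$ algebraically first (the cross-term $e^{-x^2}e^{-y^2}$ cancels), leaving just two Gaussian integrals, whereas you compute three integrals and cancel afterwards.
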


\begin{proof}
On recalling that $h(x,y)=\Phi(x,y)-\Phi(y,x)$, in the notation of  \eqref{eq:choice}, it follows that 
$k_t(y)=M(t,y)-M^{\text{sw}}(t,y)$
where
$$
M(t,y)=
\int_{-\infty}^\infty \Phi(x,y) e(-txy)\d x \quad \text{ and }
\quad 
M^{\text{sw}}(t,y)=\int_{-\infty}^\infty \Phi(y,x) e(-txy)\d x.
$$
It will be convenient to put $C=\sqrt{\pi}(1-2^{-1/2})$ in the proof, so that we may write $\Phi(x,y)=C^{-1}e^{-x^2}(e^{-y^2}-e^{-2y^2})$. 
Then 
\begin{align*}
k_t(y)
&=
\frac{1}{C}\left(-e^{-2y^2}
\int_{-\infty}^\infty e^{-x^2}e(-txy)\d x +
e^{-y^2}
\int_{-\infty}^\infty e^{-2x^2}e(-txy)\d x \right)\\
&=
\frac{\sqrt{\pi}}{C}\left(-e^{-y^2(2+\pi^2t^2)}+2^{-1/2}
e^{-\frac{1}{2}y^2(2+\pi^2t^2)}
 \right),
\end{align*}
on completing the square and executing the integral over $x$. Substituting in the value of $C$ completes the proof of the lemma.
\end{proof}

We may now assess the analytic properties of the function $G(s)$, in which it will be convenient
to recall the 
 definition \eqref{eq:def-I} of $I(t)$.

\begin{lemma}\label{lem:GGGG}
Assume that
Hypothesis \ref{hyp} holds.  Then 
$G(s)$ has a meromorphic continuation to the region $\Re(s)\geq -1$ with a simple pole at $s=-1$. 
Moreover, in this region we have 
\begin{equation}\label{eq:R}
G(s)=\frac{1}{2}
\cdot
\frac{2^{s/2}-1}{1-2^{-1/2}}\cdot \Gamma\left(\frac{s+1}{2}\right)R(s),
\end{equation}
where
\begin{equation}\label{eq:R(s)}
R(s)=
\int_{-\infty}^\infty  \frac{I(t) }{(2+\pi^2t^2)^{\frac{s+1}{2}}}
\d t
\end{equation}
is holomorphic.
\end{lemma}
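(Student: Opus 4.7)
The plan is to apply Fourier inversion in the first variable of $h$ in order to convert the mixed dependence $h(g(\x)/y, y)$ into a product whose $\x$-integral produces $I(t)$ and whose $y$-integral reduces to an explicit Mellin-Gamma integral. The definition \eqref{eq:def-katy} shows that $k_t(y)$ is essentially the Fourier transform of $h(\cdot,y)$ evaluated at $ty$, so Fourier inversion yields, for $y>0$,
\[
h\!\left(\frac{g(\x)}{y},\,y\right) \;=\; y\int_{-\infty}^{\infty} k_t(y)\,e(tg(\x))\,\d t.
\]
Substituting this into the definition of $G(s)$ and interchanging the order of integration (to be justified below) gives, in a suitable right half-plane,
\[
G(s) \;=\; \int_{-\infty}^{\infty} I(t)\,\Big(\int_0^{\infty} k_t(y)\,y^{s}\,\d y\Big)\,\d t.
\]

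Using Lemma~\ref{lem:hatk} together with the elementary identity $\int_0^{\infty} e^{-\alpha y^2}y^s\,\d y = \tfrac{1}{2}\alpha^{-(s+1)/2}\Gamma\!\left(\tfrac{s+1}{2}\right)$ applied to the two Gaussians arising from $k_t(y)$, a short computation (taking $\alpha = \tfrac{1}{2}(2+\pi^2t^2)$ and $\alpha=2+\pi^2t^2$ respectively, so that the factors of $2^{(s+1)/2}$ combine to produce $2^{s/2}-1$) shows that
\[
\int_0^{\infty} k_t(y)\,y^s\,\d y \;=\; \frac{2^{s/2}-1}{2(1-2^{-1/2})}\,\Gamma\!\left(\frac{s+1}{2}\right)(2+\pi^2t^2)^{-(s+1)/2}.
\]
Pulling the $t$-independent factors outside the integral over $t$ yields the identity \eqref{eq:R}. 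For the meromorphic continuation, Hypothesis~\ref{hyp}(ii) asserts that $R(s)$ is holomorphic in the half-plane $\Re(s)\ge -1$; the factor $\Gamma((s+1)/2)$ is meromorphic on $\CC$ with its nearest pole at $s=-1$, which is simple; and $2^{s/2}-1$ equals $2^{-1/2}-1\neq 0$ at $s=-1$. Thus $G(s)$ extends meromorphically to $\Re(s)\ge -1$ with a simple pole at $s=-1$, as claimed.

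The main obstacle is the Fubini justification. The strategy is to work first in a half-plane $\Re(s) \gg 0$, where one can exhibit an integrable majorant for the triple integrand: Lemma~\ref{lem:hatk} gives the uniform bound $|k_t(y)| \ll e^{-\frac{1}{2}y^2(2+\pi^2 t^2)}$, which is jointly integrable in $(t,y)$ on $[-T,T]\times[\eta,\infty)$ for any $\eta>0$; behaviour near $y=0$ is controlled because $h(x,0)=0$ with Schwartz decay of $h$ in the first argument keeping the original integrand bounded on $\supp(w)$, while Hypothesis~\ref{hyp}(i) supplies the needed decay of $I(t)$ to control the large-$|t|$ regime when passing to the right-hand side. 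Once the identity $G(s)=\tfrac{1}{2}\cdot\tfrac{2^{s/2}-1}{1-2^{-1/2}}\,\Gamma((s+1)/2)R(s)$ is established in such a half-plane, the right-hand side furnishes the claimed meromorphic continuation to $\Re(s)\ge -1$.
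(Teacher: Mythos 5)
Your proof follows essentially the same route as the paper: apply Fourier inversion to rewrite $h(g(\x)/y,y)$ as $y\int k_t(y)e(tg(\x))\,\d t$, interchange the $\x$- and $t$-integrations to produce $I(t)$, then compute the $y$-integral via the Gamma-function identity using the explicit Gaussian form of $k_t(y)$ from Lemma~\ref{lem:hatk}, and finally invoke Hypothesis~\ref{hyp}(ii) for the holomorphy of $R(s)$. The Mellin computation and the resulting constants match the paper exactly.

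One small slip in your Fubini discussion: $h(x,0)$ is \emph{not} zero. From $h(x,y)=\Phi(x,y)-\Phi(y,x)$ and the hypotheses $\Phi(x,0)=0$, $\int\Phi(0,y)\,\d y=1$, one gets $h(x,0)=-\Phi(0,x)$, which is nonzero for generic $x$ (indeed $k_t(0)=-1$ by Lemma~\ref{lem:hatk}). This does not affect the argument, because the majorant you actually use is the Gaussian bound $|k_t(y)|\ll e^{-\frac12 y^2(2+\pi^2t^2)}$, which together with $|I(t)|\ll 1$ makes the $(t,y)$-integral absolutely convergent for $\Re(s)>0$; the paper does essentially the same thing, bounding the absolute integral and invoking Fubini in the region $\Re(s)>0$ before continuing via the closed-form right-hand side.
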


\begin{proof}
Let 
 $\sigma=\Re(s)>2$.
In this region we have 
$$
G(s)=
\int_0^\infty \int_{\RR^3}
w(\x)
h\left(\frac{g(\x)}{y}, y\right)y^{s-1}\d\x \d y.
$$
We denote by
\[
H(t,y)=\int_{-\infty}^\infty h(x,y) e(-tx)\d x
\]
the Fourier transform of $h(x,y)$ with respect to the first variable.
Then  
the
 Fourier inversion theorem yields
\begin{align*}
\int_{\RR^3}
w(\x)
h\left(\frac{g(\x)}{y}, y\right)\d\x
 &=
\int_{-\infty}^\infty
H(t,y)\left(
\int_{\RR^3}
w(\x) e\left(\frac{tg(\x)}{y}\right)  \d\x \right)\d t  \\
&=
y
\int_{-\infty}^\infty H(yt,y) I(t) 
 \d t ,
\end{align*}
on  replacing $t/y$ by $t$ and recalling the definition \eqref{eq:def-I} of $I(t)$.
But $H(yt,y)=k_t(y)$, in the notation of \eqref{eq:def-katy}.
Hence it follows from Lemma \ref{lem:hatk} that 
$$
\int_{\RR^3}
w(\x)
h\left(\frac{g(\x)}{y}, y\right)\d\x
=
\frac{y}{1-2^{-1/2}}
\sum_{i\in \{0,1\}}
\frac{(-1)^{i+1}}{2^{i/2}}
\int_{-\infty}^\infty
I(t)e^{-\frac{1}{2^i}y^2(2+\pi^2t^2)}
\d t.
$$
Thus 
$$
G(s)=
\frac{1}{1-2^{-1/2}}
\sum_{i\in \{0,1\}}
\frac{(-1)^{i+1}}{2^{i/2}}
\int_0^\infty y^{s}
\int_{-\infty}^\infty
I(t)e^{-\frac{1}{2^i}y^2(2+\pi^2t^2)}
\d t \d y.
$$

Taking  absolute values we see that  
\begin{align*}
G(s)
\ll
\int_0^{\infty} y^{\sigma} e^{-y^2}J(y)
\d y,
\end{align*}
where
$$
J(y)
=\int_{-\infty}^{\infty} | I(t) |e^{-\frac{1}{2}\pi^2t^2y^2} \d t.
$$
If $|ty|\leq 1$ then we take $e^{-\frac{1}{2}\pi^2t^2y^2}\ll 1$ and it follows from \eqref{eq:trivial} that 
$$
\int_{|t|\leq 1/y} | I(t) |e^{-\frac{1}{2}\pi^2t^2y^2} \d t\ll y^{-1}.
$$
When $|ty|\geq 1$ we may take  $e^{-\frac{1}{2}\pi^2t^2y^2}\ll |ty|^{-1/2}$, and it follows from 
part (i) of Hypothesis \ref{hyp} that 
$$
\int_{|t|> 1/y} | I(t) |e^{-\frac{1}{2}\pi^2t^2y^2} \d t\ll  y^{-1/2}
\int_{-\infty}^{\infty} |t|^{-1/2}| I(t) |\d t\ll y^{-1/2}.
$$
Hence
$$
G(s)\ll  \int_0^\infty y^{\sigma-1}e^{-y^2} \d y+   \int_0^\infty y^{\sigma-1/2}e^{-y^2} \d y,
$$
which is bounded  for $\sigma>0$. Thus  $G(s)$ is 
absolutely convergent  in the region $\sigma>0$.

Working in the region $\sigma>0$,
an application of  Fubini's theorem allows us to  interchange the order of integration. This leads to the expression 
\[
G(s)=
\frac{1}{1-2^{-1/2}}
\sum_{i\in \{0,1\}}
\frac{(-1)^{i+1}}{2^{i/2}}
\int_{-\infty}^\infty  I(t) 
J_i(t,s)\d t,
\]
where
\begin{align*}
J_i(t,s)
&=\int_0^\infty y^{s}  e^{-\frac{1}{2^i}y^2(2+\pi^2t^2)}
 \d y\\ 
 &= \frac{2^{i(s+1)/2}}{(2+\pi^2t^2)^{\frac{s+1}{2}}} 
 \int_0^\infty y^{s}  e^{-y^2} \d y\\
 &= \frac{2^{i(s+1)/2}}{(2+\pi^2t^2)^{\frac{s+1}{2}}} \cdot \frac{1}{2}\cdot \Gamma\left(\frac{s+1}{2}\right).
 \end{align*}
We may now execute the sum over $i\in \{0,1\}$ and finally arrive at the expression for $G(s)$ recorded in the statement of the lemma.   Since part (ii) of Hypothesis~\ref{hyp} ensures that 
$R(s)$  is holomorphic in the region $\sigma\geq -1$, this gives the desired meromorphic continuation of $G(s)$ to the region $\sigma\geq -1$.
\end{proof}

We will need to understand the  derivatives of 
the function \eqref{eq:R(s)}
in the region $\Re(s)\geq -1$. 
Let $R^{(\ell)}(s)$ be the $\ell$th derivative with respect to $s$, for any integer $\ell\geq 0$. Then it follows that 
$$
R^{(\ell)}(s)
= \frac{(-1)^{\ell}  }{2^\ell} 
\int_{-\infty}^\infty  \frac{I(t) (\log (2+\pi^2t^2))^\ell}{(2+\pi^2t^2)^{\frac{s+1}{2}}}
\d t,
$$
whence
\begin{align*}
R^{(\ell)}(-1)
= 
(-1)^{\ell}  \int_{-\infty}^\infty I(t)(\log \sqrt{2+\pi^2t^2})^\ell \d t.
\end{align*}
In the light of \eqref{eq:trivial}, 
the interval $[-2,2]$ contributes $O(1)$ to the integral. On the other hand, when $|t|\geq 2$ we have 
$$
\log  \sqrt{2+\pi^2t^2} = \log |t| +\log \pi +O(|t|^{-2}),
$$
since $\log(1+\frac{2}{\pi^2t^2})=O(|t|^{-2})$. 
According to part (i) of Hypothesis \ref{hyp}, we conclude that 
\begin{align*}
R^{(\ell)}(-1)
= 
(-1)^{\ell}  \int_{-\infty}^\infty I(t)(\log |t|+\log\pi)^{\ell}  \d t
+O_\ell(1).
\end{align*}
The following result summarises our analysis of this function.

\begin{lemma}\label{cor:0.3}
Assume Hypothesis \ref{hyp} and 
let $\ell\geq 0$ be an integer. Then there 
is a monic degree $\ell$ polynomial $P\in \RR[x]$ 
such that 
\begin{align*}
R^{(\ell)}(-1)
=~& (-1)^{\ell} 
\int_{-\infty}^{\infty} I(t) P(\log |t|) \d t +
O_\ell(1).
\end{align*}
\end{lemma}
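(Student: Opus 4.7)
The plan is to build directly on the computation carried out in the paragraph immediately preceding the lemma statement, since the lemma is essentially a tidy repackaging of that calculation. That discussion, which uses the trivial bound \eqref{eq:trivial} on $[-2,2]$ together with the expansion $\log\sqrt{2+\pi^2 t^2} = \log|t| + \log\pi + O(|t|^{-2})$ on $|t|\geq 2$, culminates in the identity
\[
R^{(\ell)}(-1) = (-1)^{\ell} \int_{-\infty}^{\infty} I(t)\,(\log|t| + \log\pi)^{\ell} \d t + O_\ell(1).
\]
So the first step is to justify this formula rigorously: differentiate under the integral sign $\ell$ times in \eqref{eq:R(s)} (valid by part (ii) of Hypothesis \ref{hyp}), bound the contribution of $|t|\le 2$ by $O_\ell(1)$ via \eqref{eq:trivial}, and expand the logarithm on $|t|\geq 2$ by the binomial theorem, collecting all terms with at least one factor of $O(|t|^{-2})$ into an error.

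The second step is purely algebraic. I define
\[
P(x) = (x + \log\pi)^{\ell} = \sum_{k=0}^{\ell} \binom{\ell}{k} (\log\pi)^{\ell - k} x^{k}.
\]
By the binomial theorem $P$ is a polynomial of degree $\ell$ with real coefficients, and its leading coefficient is $\binom{\ell}{\ell}(\log\pi)^{0}=1$, so $P$ is monic. Substituting $P(\log|t|)$ for $(\log|t| + \log\pi)^\ell$ in the above identity yields the statement of the lemma.

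The only point requiring any care is the error estimate in the first step, namely the bound
\[
\int_{|t|\ge 2} |I(t)|\,(\log|t|)^{\ell-1}\,|t|^{-2} \d t \ll_\ell 1,
\]
that arises from the binomial expansion of $(\log|t|+\log\pi+O(|t|^{-2}))^\ell$; similar bounds arise for the terms of order $|t|^{-2k}$ with $k\geq 1$. Since $(\log|t|)^{\ell-1}|t|^{-2} \ll_\ell |t|^{-\delta}$ for $|t|\ge 2$ and any $\delta\in(0,1)$, this follows at once from part (i) of Hypothesis \ref{hyp}. I expect no genuine obstacle: the analytic content of the lemma is already present in the preceding paragraph, and the remaining work is simply to isolate a monic polynomial in $\log|t|$ from the expression $(\log|t|+\log\pi)^\ell$.
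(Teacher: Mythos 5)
Your proof follows exactly the paper's route: the preceding paragraph already derives $R^{(\ell)}(-1) = (-1)^\ell \int_{-\infty}^\infty I(t)(\log|t|+\log\pi)^\ell\, \d t + O_\ell(1)$ from differentiating \eqref{eq:R(s)}, applying \eqref{eq:trivial} on $[-2,2]$, and expanding $\log\sqrt{2+\pi^2 t^2}$ for $|t|\ge 2$, and the lemma is precisely the observation that $P(x)=(x+\log\pi)^\ell$ is a monic polynomial of degree $\ell$. Your justification of the error terms via Hypothesis~\ref{hyp}(i) is correct and matches what the paper implicitly uses.
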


\subsection{Conclusions and heuristics}
 
It is now time to return to our expression~\eqref{eq:main} for $M(B,w)$, in order to 
record our main circle method heuristic. 
Assume  Hypothesis~\ref{hyp} holds.
If  $Q=B^{3/2}$ then 
$Q^{-2}B^3=1$ and we find that 
\[
M(B,w)=2 \cdot \frac{1 }{2\pi i} 
\int_{2-i \infty}^{2+i\infty} F(s+1) \zeta(s+1)G(s) Q^{s+1}\d s .
\]
Proposition \ref{prop:local-densities-convergence-factors} implies 
that  $F(s+1)$ has a meromorphic continuation to the region $\Re(s)>-\frac{3}{2}$ with a pole of order $\rho_U$ at $s=-1$.    
Moreover, Lemma \ref{lem:GGGG}
implies that $G(s)$ has a meromorphic continuation to the region $\Re(s)\geq -1$ with a simple pole at $s=-1$.  In addition to this, it follows from \eqref{eq:R} that $G(0)=0$, so that the integrand is holomorphic at $s=0$. 
Overall, we conclude that in the region $\Re(s)\geq -1$ the function 
$F(s+1)  \zeta(s+1)G(s) Q^s$ has a pole of order $\rho_U+1$ at $s=-1$ and is holomorphic everywhere else. 

In the usual way the asymptotic behaviour of $M(B,w)$ is obtained
by moving the line of integration to the left in order to capture the pole at $s=-1$.  
We shall not delve into details here, but content ourselves with recording the expected  asymptotic formula
\begin{align*}
M(B,w)
&\sim
2\cdot 
\Res_{s=-1}  \left(F(s+1) \zeta(s+1)G(s)Q^{s+1}\right)\\
&=
\Res_{s=0}\left( 
\frac{2^{(s-1)/2}-1}{1-2^{-1/2}}
F(s)\zeta(s)\Gamma\left(\frac{s}{2}\right)R(s-1)Q^s\right),
\end{align*}
on making the substitution  \eqref{eq:R}.

We recall that 
we have 
$
F(s)=s^{-\rho_U}\tilde F(s),
$
for some function $\tilde F(s)$ which is holomorphic for $\Re(s)> -\frac{1}{2}$. 
Moreover, we have $\Gamma(\frac{s}{2})=s^{-1}(2+O(s))$. Let 
$$
U(s)=
\frac{2^{(s-1)/2}-1}{1-2^{-1/2}}
\zeta(s)R(s-1)Q^s.
$$
This is holomorphic for $\Re(s)\geq 0$.
Taking the Taylor expansion about the point $s=0$, we obtain
$$
U(s)=U(0)+
\frac{U'(0)}{1!} s +\frac{U''(0)}{2!} s^2 +\cdots +
\frac{U^{(\rho_U)}(0)}{\rho_U!} s^{\rho_U}+ O(s^{\rho_U+1}).
$$
It therefore follows that 
$$
\Res_{s=0}\left( U(s)
F(s)\zeta(s)\Gamma\left(\frac{s}{2}\right)\right)=\frac{2\tilde F(0)U^{(\rho_U)}(0)}{\rho_U!}
+
O\left(\max_{0\leq \ell\leq \rho_U-1} |U^{(\ell)}(0)|\right).
$$
At this point it is convenient to make another assumption about the asymptotic behaviour of the integral $I(t)$.

\begin{hyp}\label{hyp'}
Let $I(t)$ be given by \eqref{eq:def-I} and define
\begin{equation}\label{qq}
J_\ell(B)=\int_{-\infty}^\infty I(t) (\log |t|)^\ell \d t,
\end{equation}
for $\ell\geq 0$.
Then 
$
J_\ell(B) \ll_{\ell} (\log B)^{b+\ell},
$
where 
$b$
is  defined in Conjecture \ref{con1} and 
the implied constant is  allowed to depend  on $w, f$ and $\ell$.
\end{hyp}

  Under Hypotheses~\ref{hyp} and \ref{hyp'},
it is clear from Lemma \ref{cor:0.3} and the Leibniz rule  that 
\begin{align*}
U^{(\ell)}(0)
&=-\zeta(0) \sum_{j=0}^{\ell} \binom{\ell}{j}
R^{(j)}(-1)(\log Q)^{\ell-j}
 +
O\left((\log B)^{b+\ell-1}\right)\\
&=-\zeta(0) \cdot \ell!\sum_{j=0}^{\ell} \frac{(-1)^jJ_j(B)(\log Q)^{\ell-j} }{j!(\ell-j)!}+
O\left((\log B)^{b+\ell-1}\right),
\end{align*}
for any integer $\ell\geq 0$.  Since $\zeta(0)=-\frac{1}{2}$ and $Q=B^{3/2}$, we therefore  deduce the following result.  

\begin{theorem}\label{thm:trivial-character}
  Under Hypotheses~\ref{hyp} and \ref{hyp'}, the contribution from the trivial character is
  \[
  M(B,w) =   \lim_{s\to 0} \left(s^{\rho_U} F(s)\right)  \cdot 
r(B)  
  +O((\log B)^{\rho_U+b-1}),
    \]
    where if $J_j(B)$ is given by  \eqref{qq} then 
$$
r(B)=\sum_{j=0}^{\rho_U}
\frac{ (-1)^j\left(\frac{3}{2}\right)^{\rho_U-j} J_j(B)(\log B)^{\rho_U-j}}{j!(\rho_U-j)!}.
$$
\end{theorem}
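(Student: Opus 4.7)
The plan is to extract the theorem from the contour-integral representation
$$
M(B,w) = \frac{2}{2\pi i} \int_{2-i\infty}^{2+i\infty} F(s+1)\,\zeta(s+1)\,G(s)\,Q^{s+1}\,ds,
$$
obtained in \eqref{eq:main}, by shifting the line of integration past the unique pole in $\Re(s)\ge -1$ and then carefully reading off the residue. I would first assemble the analytic information: Proposition~\ref{prop:local-densities-convergence-factors} gives $F(s+1)$ meromorphic on $\Re(s)>-3/2$ with a pole of order exactly $\rho_U$ at $s=-1$; Lemma~\ref{lem:GGGG} (together with Hypothesis~\ref{hyp}) gives $G(s)$ meromorphic on $\Re(s)\ge -1$ with a simple pole at $s=-1$; and the factor $\zeta(s+1)$ is holomorphic at $s=-1$. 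Moreover, the representation~\eqref{eq:R} shows that $G(0)=0$ because $2^{s/2}-1$ vanishes there, so the simple pole of $\zeta(s+1)$ at $s=0$ is cancelled. Consequently the integrand has a single pole of order $\rho_U+1$ at $s=-1$ in the closed strip $-1\le\Re(s)\le 2$.

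Next I would shift the contour to a vertical line $\Re(s)=-1+\eta$ for some small $\eta>0$ (or, using the meromorphic continuation of $F$, a little to the left of $-1$). This gives
$$
M(B,w) = 2\,\mathrm{Res}_{s=-1}\!\Bigl(F(s+1)\zeta(s+1)G(s)Q^{s+1}\Bigr) + (\text{integral along shifted contour}).
$$
The main obstacle is to show that the shifted-contour integral is absorbed into the stated error $O((\log B)^{\rho_U+b-1})$: this needs growth bounds for $F(s+1)\zeta(s+1)$ on vertical lines combined with the rapid decay of the factor $\Gamma((s+1)/2)$ from~\eqref{eq:R} in the imaginary direction, together with a quantitative version of Hypothesis~\ref{hyp}(ii) giving polynomial growth of $R(s)$ on such lines. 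Assuming this standard (but delicate) step, only the residue contributes to the main term.

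To evaluate that residue, I would substitute $s\mapsto s-1$ and rewrite the main term as
$$
\mathrm{Res}_{s=0}\!\left(U(s)\,F(s)\,\Gamma(s/2)\right),\qquad
U(s)=\frac{2^{(s-1)/2}-1}{1-2^{-1/2}}\,\zeta(s)\,R(s-1)\,Q^{s},
$$
where $U$ is holomorphic at $s=0$. Writing $F(s)=s^{-\rho_U}\tilde F(s)$ with $\tilde F$ holomorphic near $0$ and $\Gamma(s/2)=\frac{2}{s}\Gamma(1+s/2)$, the integrand has a pole of exact order $\rho_U+1$ at $s=0$, and the residue becomes
$$
\frac{2}{\rho_U!}\left.\frac{d^{\rho_U}}{ds^{\rho_U}}\!\left[U(s)\tilde F(s)\Gamma(1+s/2)\right]\right|_{s=0}.
$$
Expanding by Leibniz, the leading contribution is $\frac{2\tilde F(0)}{\rho_U!}U^{(\rho_U)}(0)$, since $\tilde F(0)\Gamma(1)=\tilde F(0)=\lim_{s\to 0}s^{\rho_U}F(s)$, and all other Leibniz terms differentiate $\tilde F(s)\Gamma(1+s/2)$ at least once, so they produce at most $\rho_U-1$ differentiations of $U$ and hence contribute $O((\log B)^{\rho_U+b-1})$ by Hypothesis~\ref{hyp'} and Lemma~\ref{cor:0.3}.

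Finally, I would plug in the formula for $U^{(\ell)}(0)$ derived immediately before the statement. Leibniz-expanding $U$ and using Lemma~\ref{cor:0.3} to replace $R^{(j)}(-1)$ by $(-1)^{j}J_{j}(B)$ up to a permissible $O((\log B)^{b+j-1})$ error, together with $\zeta(0)=-1/2$, gives
$$
\frac{2\,U^{(\rho_U)}(0)}{\rho_U!}
=\sum_{j=0}^{\rho_U}\frac{(-1)^{j}J_{j}(B)(\log Q)^{\rho_U-j}}{j!\,(\rho_U-j)!}
+O\!\left((\log B)^{\rho_U+b-1}\right).
$$
Substituting $\log Q=\tfrac{3}{2}\log B$ yields exactly $r(B)$, and multiplying by $\lim_{s\to 0}s^{\rho_U}F(s)=\tilde F(0)$ produces the stated main term, completing the argument.
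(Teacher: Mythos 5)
Your argument is essentially the paper's own: the same contour representation \eqref{eq:main}, the same analytic inputs (Proposition~\ref{prop:local-densities-convergence-factors}, Lemma~\ref{lem:GGGG}, the vanishing $G(0)=0$), the same passage to $\Res_{s=0}\bigl(U(s)F(s)\Gamma(s/2)\bigr)$ with $F(s)=s^{-\rho_U}\tilde F(s)$, and the same Leibniz expansion using Lemma~\ref{cor:0.3}, Hypothesis~\ref{hyp'}, $\zeta(0)=-\tfrac12$ and $\log Q=\tfrac32\log B$ to produce $r(B)$. The step you flag as delicate (bounding the shifted-contour integral) is also not carried out in the paper, which explicitly contents itself with recording the expected residue formula, so your treatment matches the paper's level of detail.
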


According to  Hypothesis  \ref{hyp'}, we have $r(B)=O((\log B)^{\rho_U+b})$, which  therefore 
accords with  Conjecture \ref{con1}. 
When $\rho_U>0$ the sum $r(B)$ features multiple terms, some of which have negative coefficients, but all with seemingly equal  order of magnitude. This is very different to classical applications of the circle method. 
When $\rho_U=0$, however,  the contribution from the trivial character is more straightforward. Thus, 
under Hypotheses~\ref{hyp} and \ref{hyp'}, we obtain 
  \[
  M(B,w) =  F(0)  \cdot J_0(B)
  +O((\log B)^{b-1}),
    \]
    where
 $J_0(B)$ is given by  \eqref{qq}. 
 In particular, we  have $J_0(B)=\sigma_\infty(B)$, in the notation of \eqref{eq:infinity'}.
  It follows from 
 Proposition~\ref{prop:local-densities-convergence-factors} that 
 $F(0)=\prod_p \sigma_p$, where $\sigma_p$ are the local densities. 
For our heuristic we shall suppose that the characteristic function of  the region $[-B,B]^3$ is approximated by an appropriate compactly supported smooth weight function $w$. This leads to  the following expectation.

\begin{heuristic}\label{heur:pre}
  Let $U\subset \AAA^3$ be a smooth cubic surface that is log K3 over $\QQ$ and that is
     defined by a cubic polynomial $f\in \ZZ[x,y,z]$.  Assume that $\rho_U=0$.
Then 
$$
N_U(B)\sim 
 \prod_{p}  \sigma_p
 \cdot \sigma_\infty(B),
 $$
as $B\to \infty$, 
where $\sigma_\infty(B)$ is given by  \eqref{eq:infinity'}.
\end{heuristic}

We may further refine this by supposing that Hypothesis \ref{hyp:garlic} holds.
Combining Heuristic \ref{heur:pre} with 
Proposition~\ref{prop:arch-volume-expansion}, we are therefore led to the following expectation.

\begin{heuristic}\label{heur:circle-method}
  Let $U\subset \AAA^3$ be a smooth cubic surface that is log K3 over $\QQ$  and that is
     defined by a cubic polynomial $f\in \ZZ[x,y,z]$.  Assume that 
     $U(\ZZ)$ is Zariski dense and
     $\rho_U=0$.
Then 
  \[
    N_{U}^\circ(B)\sim
    c_\infty \prod_{p}  \sigma_p \cdot
        (\log B)^{b},
  \]
  as $B\to \infty$,
  where  $c_\infty$ is given by \eqref{eq:arch-constant-abstract}.
  \end{heuristic}

Later, in Section \ref{s:gen_cubes}, we shall return to the heuristic suggested by 
Theorem \ref{thm:trivial-character} for some explicit cubic surfaces $U\subset \AAA^3$ 
with  $\rho_U>0$. 
  It remains to offer some justification for Heuristic \ref{main-heur}, which is concerned with arbitrary
  $\rho_U\geq 0$.

\subsubsection*{Analogy with Manin's conjecture}

It is natural to draw comparisons with Manin's conjecture, which concerns the distribution of $\QQ$-rational points on smooth, projective, Fano varieties $V$ defined over $\QQ$.  A value for the leading constant in this conjecture has been 
suggested by Peyre \cite{peyre-duke,peyre-bordeaux}.   Let $H:V(\QQ)\to \RR_{\geq 0}$ be an anticanonical height function and define the counting function
$
N(Z;B)=\#\left\{P\in Z: H(P)\leq B\right\},
$ 
for any subset $Z\subset V(\QQ)$. Then, as put forward in  
\cite[Sec.~8]{peyre-bordeaux}, we expect there  to
exist a {\em thin set} $\Omega\subset V(\QQ)$ for which
\[
N(V(\QQ)\setminus \Omega;B)\sim c_V B (\log B)^{\rho_V-1}, 
\]
as $B\to \infty$. (Note that rational points are much more prolific for Fano varieties than integral points are expected to be in the setting of log K3 surfaces.)
The conjectured leading constant has the structure
\begin{equation}\label{eq:sky}
c_V= \alpha_V \cdot  \beta_V \cdot \tau_{V,H}\left(V(\A_\QQ)^{\Br V}\right), 
\end{equation}
where $V(\A_\QQ)^{\Br V}$ is the set  of adeles on $V$ that are orthogonal
to the Brauer--Manin pairing, 
 and $\tau_{V,H}$ is the {\em Tamagawa measure} defined in \cite[Sec.~2]{peyre-duke}. The constants 
$\alpha_V$ and $\beta_V$ are rational numbers; 
the latter is  the order of  the Brauer group $\Br(V)/\Br(\QQ)$   and the former 
is the volume of a certain polytope in the dual of the effective cone of divisors,
as defined in
 \cite[Déf.~2.4]{peyre-duke}.
In particular, if the Picard group is trivial and the height function is associated with $q$ times a generator, then $\alpha_V=1/q$.

Suppose that $V\subset \PP^{n}$ is a smooth complete intersection of $r$ degree $d$ 
hypersurfaces, 
with $n\geq r+2^{d-1}(d-1)r(r+1)$. 
Then it follows from 
 work of  Birch \cite{birch}, which is proved using  the circle method, that an asymptotic formula is available for 
$N(V(\QQ);B)$, with $\alpha_V=1/(n+1-rd)$ and $\beta_V=1$,  and  where
$\tau_{V,H}(V(\A_\QQ)^{\Br V})$ is the usual product of local densities. 
However, there exist many Fano varieties $V$ for which the full Manin--Peyre conjecture holds 
with $\beta_V\neq 1$ or with a more complicated expression for $\alpha_V$.
An example of the latter is provided by  the smooth quartic  del Pezzo surface (corresponding to $(d,n,r)=(2,4,2)$ in the above notation)
studied by de la Bret\`eche and Browning \cite{dp4}, in which an asymptotic formula is obtained with $\alpha_V=1/36$ and $\beta_V=1$.

\subsubsection*{A refined heuristic}

Returning to the setting of log K3 surfaces $U\subset \AAA^3$, as in Conjecture \ref{con1}, 
we have seen that 
Theorem \ref{thm:trivial-character} suggests an asymptotic behaviour $N_U^\circ(B)\sim c_{\mathrm{h}} (\log B)^{\rho_U + b}$, for a suitable constant $c_{\mathrm{h}}$. Inspired by our discussion of Peyre's constant, we have been led to modify the product of local densities to 
account for failures of strong approximation, in addition to allowing for  
an unspecified positive rational factor. This has led us to the value for 
$c_{\mathrm{h}}$ proposed in 
\eqref{eq:leading-const-with-gamma}, and thereby  concludes our discussion of Heuristic   \ref{main-heur}.

\subsection{Singularities on $U$}\label{eq:lied}
We  conclude by briefly remarking on the case of singular $U$.  In this case, we still have
 $ \sigma_p= \lim_{k\to\infty} p^{-2k}\nu(p^k)$
and we  proceed by comparing this quantity to the analogous one defined on a minimal desingularisation. To this end, let $\rho\colon \mtX\to \mX$ and $\tX\to X$ be  minimal desingularisations as before, but now without $\rho$ being an isomorphism above $U$ and without the requirement that $D$ have strict normal crossings. Define
$\tilde\nu(p) = \#\mtU(\FF_p)$.

\begin{lemma}
  There is a finite set $S$ of places (containing the archimedean one and those of bad reduction of $\mtX$) such that
  \[
    \lim_{k\to\infty}\frac{\nu(p^k)}{p^{2k}} = \frac{\tilde\nu(p)}{p^2}
  \]
  for all $p\not\in S$.
\end{lemma}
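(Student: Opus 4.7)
The plan is to interpret both sides as $p$-adic volumes and match them via the resolution $\rho\colon \mtU \to \mU$. I would let $S$ consist of the archimedean place, the primes of bad reduction of $\mtX$, and any further finite set of primes required so that the sequence of blow-ups defining $\mtX \to \mX$ behaves well mod $p$ for $p \notin S$. For $p\notin S$, the scheme $\mtU_{\ZZ_p}$ is smooth of relative dimension $2$, so Hensel's lemma gives $\#\mtU(\ZZ/p^k\ZZ) = p^{2(k-1)}\tilde\nu(p)$ for every $k\geq 1$, and in particular $p^{-2k}\#\mtU(\ZZ/p^k\ZZ) = \tilde\nu(p)/p^2$.

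For the left hand side, I would write
\[
  \frac{\nu(p^k)}{p^{2k}} = p^k \mu_p\bigl(\{x\in \ZZ_p^3 : v_p(f(x)) \geq k\}\bigr),
\]
with $\mu_p$ the Haar measure on $\ZZ_p^3$, and stratify this set by the reduction $\bar x \in \FF_p^3$. Triples with $\bar x\notin \mU(\FF_p)$ satisfy $v_p(f(x)) = 0$ and contribute nothing for $k\geq 1$, while for $\bar x$ lying in the smooth locus $\mU^{\mathrm{sm}}(\FF_p)$ the implicit function theorem in $\ZZ_p$ (applicable because $\nabla f(\bar x) \neq 0$) shows that each such $\bar x$ contributes $p^{-k-2}$ to $\mu_p$, and hence $p^{-2}\#\mU^{\mathrm{sm}}(\FF_p)$ in the limit. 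What remains is to show that the triples reducing to the singular locus $\mU^{\sing}(\FF_p)$ contribute exactly $p^{-2}\#\rho^{-1}(\mU^{\sing})(\FF_p)$ in the limit, for then the decomposition $\tilde\nu(p) = \#\mU^{\mathrm{sm}}(\FF_p) + \#\rho^{-1}(\mU^{\sing})(\FF_p)$ finishes the proof.

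This last step is the crux and the main obstacle. The plan is to use the valuative criterion of properness for $\rho$ to lift each $\ZZ_p$-point of $\mU$ reducing to $\mU^{\sing}(\FF_p)$ uniquely to a $\ZZ_p$-point of $\mtU$, producing (up to a $p$-adic null set) a bijection between $\ZZ_p^3$-solutions of $f=0$ with singular reduction and $\ZZ_p$-points of $\mtU$ reducing to the exceptional locus $\rho^{-1}(\mU^{\sing})$. A $p$-adic change of variables along this bijection, using smoothness of $\mtU_{\ZZ_p}$ to evaluate the pushed-forward measure explicitly on each residue ball, yields the claimed contribution of $p^{-2}\#\rho^{-1}(\mU^{\sing})(\FF_p)$. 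The delicate point is to track the pullback of the Leray form $\diff x \wedge \diff y \wedge \diff z / \diff f$ through each blow-up in the composite $\mtU \hookrightarrow \mtX \to \PP^3_{\ZZ_p}$ and verify that its $p$-adic Jacobian is a unit along the exceptional divisor for $p \notin S$; this is precisely what forces us to enlarge $S$ by finitely many further primes, but the check is local and finite, so only finitely many primes need be excluded.
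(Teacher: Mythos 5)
Your overall strategy — compare $p$-adic volumes on $\mU$ and $\mtU$ and transport them along $\rho$ — is the same as the paper's, and your stratification of $\{x\in\ZZ_p^3 : v_p(f(x))\ge k\}$ by reduction modulo $p$ (nothing from outside $\mU(\FF_p)$; $p^{-2}$ per point of $\mU^{\mathrm{sm}}(\FF_p)$; a residual singular stratum) is a correct and somewhat more elementary route than the paper's, which instead covers $\mU(\ZZ_p)\cap X_\reg(\QQ_p)$ by the balls $U_\x$ of Salberger~\cite[Thm.~2.13]{salberger} and identifies the result with the Chambert-Loir--Tschinkel Tamagawa measure. The formula $\nu(p^k)/p^{2k}=p^k\mu_p\{v_p(f)\ge k\}$, the Hensel computation on $\mtU$, and the use of the valuative criterion to lift $\ZZ_p$-points of $\mU$ with singular reduction to $\mtU$ are all sound.

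However, there is a genuine gap in the step you yourself flag as the crux. You assert that the $p$-adic Jacobian of $\rho$ is a unit along the exceptional divisor for $p\notin S$, and then justify this by saying ``the check is local and finite, so only finitely many primes need be excluded.'' That phrasing mistakes a geometric condition for a bad-primes condition. The order of vanishing of that Jacobian along an exceptional component $E_i$ is its \emph{discrepancy} $a_i$ in $K_{\tX}=\rho^*K_X+\sum a_iE_i$, and no enlargement of $S$ will kill it if some $a_i>0$: the Jacobian would vanish identically on $E_i$ over $\QQ_p$ for \emph{every} good prime, and your change of variables would then produce $p^{-2-a_i}$ per $\FF_p$-point above the singular locus rather than $p^{-2}$, so the two sides would not agree. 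What makes your argument work here is that $\rho$ is \emph{crepant} (all $a_i=0$), i.e.\ $\rho^*\omega_X\cong\omega_{\tX}$; this holds because the surface singularities arising here are Du Val, so the minimal resolution is crepant, and this isomorphism then spreads out over $\ZZ[1/N]$ for some $N$, which is the only place a finiteness-of-$S$ argument legitimately enters. The paper states this right at the start of its proof (``The minimal desingularisation is crepant, so that $\rho^*\omega_X\cong\omega_{\tX}$''), and you would need to add the same invocation — and point out that it is a hypothesis about the singularities of $X$, not something purchasable by enlarging $S$ — before your step 3 can be considered complete.
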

\begin{proof}
  The minimal desingularisation is crepant, so that  $\rho^*\omega_X \cong \omega_{\tX}$. Moreover, this isomorphism spreads out to an isomorphism between $\rho^*\omega_{\mX}$ and $\omega_{\mtX}$, except possibly above a finite set of places. 
  Let $S$ be the union of these places, the places of bad reduction of $\mtX$, and the archimedean place. Equip $\omega_X$ with a metric that is the model metric outside $S$,  and $\omega_{\tX}$ with the pullback metric. After possibly enlarging $S$, this pullback is the model metric outside $S$.
  Let $p\not\in S$ be a prime, and denote by $\tau_p$ and $\tilde\tau_p$ the resulting Tamagawa measures on $X(\QQ_p)$ and $\tX(\QQ_p)$, respectively,  satisfying $\tau_p = \rho_*\tilde\tau_p$.  (Their definitions coincide on $X_{\reg}\cong \rho^{-1}X_{\reg}$.)

  We construct a countable disjoint covering $\cB$ of $\mU(\ZZ_p) \cap X_\reg(\QQ_p)$ as follows.
  Let $\x\in \mU(\ZZ_p)\cap X_\reg(\QQ_p)$ be a $\ZZ_p$-point whose generic point is regular. Denote by $e_\x$ the minimal power of $p$ annihilating the torsion of $H^0(\Spec \ZZ_p, \x^*\Omega_{\mU/\ZZ_p})$.
  Define
  \[ 
    U_\x = \{ \x'\in\mU(\ZZ_p) : \x' \equiv \x \bmod p^{e_\x}\}.
  \]
  For fixed $E$, those $\x$ with $e_\x\le E$ form a finite set of balls $\cB_E = \{U_{\x_1},\dots U_{\x_{s_E}}\}$; let $\cB=\bigcup_{E=1}^\infty \cB_E$ be their union.
  
  For each of the $U_\x$, the arguments used by Salberger~\cite[Thm.~2.13]{salberger} are applicable and show that
  \[
    \tau_p(U_\x) = p^{-2e_\x} = \frac{\#\lim_{l\to\infty} \#\{\x'\in\mU(\ZZ/p^l\ZZ) : \x'\equiv \x \bmod p^{e_\x}\}}{p^{2l}}.
  \]
  As $l$ grows, 
apart from the at most  finitely many singularities, 
    eventually every point modulo $p^l$ is counted this way; it follows that
  \[
    \tau_p(\mU(\ZZ_p) \cap X_\reg(\QQ_p)) = \sum_{U_\x\in \cB} \tau_p(U_\x)
    = \lim_{l\to \infty} \frac{\nu(p^{l})}{p^{2l}}.
  \]
According to~\cite[Cor.~2.15]{salberger}, we also get $\tilde\tau_p(\mtU(\ZZ_p)) = \tilde\nu(p)/p^2$. 
  Moreover, $\rho$ restricts to a measure preserving homeomorphism $\rho^{-1}X_\reg(\QQ_p) \to X_\reg(\QQ_p)$ by construction of the Tamagawa measures. As the complement $\rho^{-1}X_\sing(\QQ_p)$ of the former set is a null set, we obtain
  \begin{align*}
    \frac{\tilde\nu(p)}{p^2}
    = \tilde\tau_p(\mtU(\ZZ_p) \cap \rho^{-1}X_\reg(\QQ_p)) 
    = \tau_p(\mU(\ZZ_p)\cap X_\reg(\QQ_p)) 
     = \lim_{l\to\infty}\frac{\nu(p^l)}{p^{2l}},
  \end{align*}
  as claimed.
\end{proof}
As a consequence of this result, we deduce that 
\[
\prod_{p\not\in S}
  \frac{L_p(1,\Pic(\tilde X_{\bar{\QQ}}))}{\zeta(1,\tilde D)}\sigma_p =\prod_{p\not\in S} \frac{L_p(1,\Pic( \tilde{X}_{\bar{\QQ}}))}{\zeta(1,\tilde D)} \frac{\tilde\nu(p)}{p^2}
\]
is absolutely convergent, suggesting an asymptotic behaviour 
\[
N_U^\circ(B)\sim
 c (\log B)^{\rho_{\tU} + b},  
\]
for a suitable constant  $c$.

\section{Norm form equations}\label{s:norm}

Let $K/\QQ$ be a cubic number field and let $k\in \ZZ$ be non-zero. Let $\mU\subset \AAA_\ZZ^3$ be the smooth cubic surface defined by the polynomial $f(x,y,z)=\mathrm{N}_{K/\QQ}(x,y,z)-k$, where $\mathrm{N}_{K/\QQ}(x,y,z)$ is the  {\em norm form} associated to the number field, and let $U$ be its generic fibre.  
Since $U_{\QQbar}$ is a torus over $\bar \QQ$, it is an open
subset of affine space over $\bar \QQ$.  
The Picard group of affine space vanishes and so it follows that the  geometric Picard group of $U$ vanishes. Thus $\rho_U=0$, since the Picard group $\Pic (U)$ is a subgroup of
the geometric Picard group.

We proceed by showing that the exponent $r-1$ in \eqref{eq:lgw} agrees with the exponent of 
$\log B$ in 
Conjecture \ref{con1} and 
Heuristic \ref{heur:circle-method}. Observe that the divisor $D$ is given by 
$V(\mathrm{N}_{K/\QQ}(x_0,y_0,z_0))$.  If $K$ is totally real, then $b=2$. On the other hand, if $K$ has a complex embedding, then $b=1$. 
Thus $b=r-1$, and so the exponents of $\log B$ do indeed match.
We do not expect the leading constant in  
\eqref{eq:lgw} to agree with the leading constant in 
Heuristic \ref{heur:circle-method}, since   $U(\ZZ)$ is a thin set.

\section{Sums of  cubes: rank zero}\label{s:cubes}

In this section, we specialise to the smooth cubic surface $U\subset \AAA^3$ defined by the polynomial 
$f(x,y,z)=x^3+y^3+z^3-k$,  for an integer $k\not\equiv \pm 4\bmod{9}$.  
Our main aim is to check that 
Heuristic \ref{heur:circle-method} aligns with the prediction worked out by Heath-Brown~\cite[p.~622]{33} when $k$ is cube-free. 

The compactification $X\subset \PP^3$ is the smooth cubic surface defined by the polynomial 
$
f_0=x_0^3+y_0^3+z_0^3-kt_0^3.
$
The divisor $D$ is the  smooth genus $1$ curve 
$V(x_0^3+y_0^3+z_0^3)$.
In particular, $D$  is geometrically irreducible and we have  $b=1$ in 
the notation of Conjecture \ref{con1}.
We claim that 
\begin{equation}\label{eq:onion}
\rho_U=\begin{cases}
0 & \text{ if $k$ is not a cube,}\\
3 & \text{ if $k$ is a cube.}
\end{cases}
\end{equation}
Since $\rho_U=\rho_X-1$, it will suffice to calculate $\rho_X$. 
When $k$ is a cube the surface $X$ is $\QQ$-isomorphic to the surface 
$x_0^3+y_0^3+z_0^3+t_0^3=0$. But then it follows from \cite[Prop.~6.1]{pt2}
that $\rho_X=4$. 
When $k$ is not a cube it follows from work of Segre \cite[Thm.~IX]{segre} that $\rho_X=1$.
This establishes the claim.

\subsection{Non-archimedean densities}

For any prime $p$ the relevant $p$-adic density is 
$
   \sigma_p = \lim_{\ell\to\infty}p^{-2\ell}\nu(p^\ell), 
 $
 where
 $$
\nu(p^\ell)=  \#\{(x,y,z)\in (\ZZ/p^\ell\ZZ)^3: x^3+y^3+z^3\equiv k \bmod{p^\ell}\}.
$$
Heath-Brown \cite[p.~622]{33} has calculated these explicitly when $k$ is cube-free,  beginning with 
\begin{equation}\label{eq:K1}
\sigma_3=\frac{\nu(27)}{27^2}.
\end{equation}
Moreover, 
\begin{equation}\label{eq:K2}
\sigma_p=\begin{cases}
1 &\text{ if $p\equiv 2\bmod{3}$ and $p\nmid k$,}\\
1-\frac{1}{p^2} &\text{ if $p\equiv 2\bmod{3}$ and $p\mid k$.}
\end{cases}
\end{equation}
On the other hand, if $p\equiv 1\bmod{3}$,
let  $a_p,b_p$ the unique choice of integers such that 
$4p=a_p^2+27b_p^2$, with 
$a_p\equiv 1 \bmod{3}$ and $b_p>0$.
Define 
\begin{equation}\label{eq:cpk}
c_p(k)=\begin{cases}
2 & \text{ if  $k^{(p-1)/3} \equiv 1 \bmod{p}$},\\
-1 & \text{ if  $k^{(p-1)/3} \not\equiv 1 \bmod{p}$}.
\end{cases}
\end{equation}
 Then 
\begin{equation}\label{eq:K3}
\sigma_p=\begin{cases}
1+\frac{3c_p(k)}{p} -\frac{a_p}{p^2} &\text{ if $p\equiv 1\bmod{3}$ and $p\nmid k$,}\\
1+ \frac{(p-1)a_p-1}{p^2} &\text{ if $p\equiv 1\bmod{3}$ and $p\mid k$.}
\end{cases}
\end{equation}
When $k$ is not cube-free it is still possible to calculate explicit expressions for $\sigma_p$, but 
we have chosen not to do so. However, if $p\nmid k$ then 
\eqref{eq:K1}, \eqref{eq:K2} and \eqref{eq:K3} remain true.

\subsection{Archimedean  density}\label{s:clip}

We begin by discussing the integral $I(t)$ in \eqref{eq:def-I}
in the special case \eqref{3cubes}.
We already  saw in Example \ref{example-3cubes} that Hypothesis \ref{hyp} holds in this case.
In the development of 
Heuristic~\ref{heur:circle-method} we introduced 
Hypothesis \ref{hyp'}, which concerns the asymptotic behaviour of the integral $J_\ell(B)$, as  defined in 
\eqref{qq}. The following result confirms this hypothesis, since $b=1$ in this setting. 

\begin{lemma}\label{lem:jiB}
Let $\ell\geq 0$ be an integer. Then
$$
J_\ell(B)=\kappa_\ell (\log B)^{\ell+1} +O((\log B)^{\ell}),
$$
where
$$
\kappa_\ell=\frac{3^{\ell}}{\ell+1} \cdot \frac{\Gamma(\frac{1}{3})^3}{\pi\sqrt{3}}.
$$
\end{lemma}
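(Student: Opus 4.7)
The plan is to exploit the asymptotic formula for $I(t)$ derived in Example~\ref{example-3cubes}, namely
\[
I(t) = \frac{\Gamma(1/3)^3}{6\pi\sqrt{3}}\cdot\frac{e(-kt/B^3)}{|t|} + O_N\bigl(|t|^{-N}\bigr),
\]
valid for $|t|\geq 1$ uniformly in $B$, and to track how the factor $1/|t|$ produces one extra logarithm. First I would split $J_\ell(B)$ at $|t|=1$. The piece over $|t|\leq 1$ is $O_\ell(1)$, since $I(t)\ll 1$ by \eqref{eq:trivial} while $\int_0^1 |\log t|^\ell\,\d t = \ell!$; and the $O_N(|t|^{-N})$ error over $|t|\geq 1$ contributes $O_\ell(1)$ as soon as $N\geq 2$.

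For the main term on $|t|\geq 1$, I would fold the $\pm t$ ranges using the evenness of $\cos$, obtaining
\[
\frac{\Gamma(1/3)^3}{3\pi\sqrt{3}}\int_1^\infty \frac{\cos(2\pi|k|t/B^3)}{t}(\log t)^\ell\,\d t.
\]
The substitution $u=2\pi|k|t/B^3$ converts $\d t/t$ into $\d u/u$ and turns $\log t$ into $\log u + L$ with $L = 3\log B - \log(2\pi|k|)$. Expanding $(\log t)^\ell$ binomially reduces the matter to analysing
\[
F_j(\eps) = \int_\eps^\infty \frac{\cos u}{u}(\log u)^j\,\d u, \qquad \eps = \frac{2\pi|k|}{B^3},\quad 0\leq j\leq \ell.
\]

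To handle $F_j(\eps)$ as $\eps\to 0^+$, I would split at $u=1$. The tail $\int_1^\infty$ converges to a bounded constant after one integration by parts against $\d\sin u$. On $[\eps,1]$, writing $\cos u = 1 + O(u^2)$ reduces matters to $\int_\eps^1 u^{-1}(\log u)^j\,\d u = -(\log\eps)^{j+1}/(j+1)$, so
\[
F_j(\eps) = -\frac{(\log\eps)^{j+1}}{j+1} + O(1) = \frac{(-1)^j 3^{j+1}}{j+1}(\log B)^{j+1} + O\bigl((\log B)^j\bigr),
\]
after substituting $\log \eps = -L = -3\log B + O(1)$ and discarding subleading logarithmic terms.

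Finally I would collect the leading-order contributions. Multiplying $L^{\ell-j} = 3^{\ell-j}(\log B)^{\ell-j} + O((\log B)^{\ell-j-1})$ with the leading part of $F_j(\eps)$ gives
\[
\binom{\ell}{j} L^{\ell-j} F_j(\eps) = \binom{\ell}{j}\frac{(-1)^j 3^{\ell+1}}{j+1}(\log B)^{\ell+1} + O\bigl((\log B)^\ell\bigr),
\]
and summation over $j$ via the classical identity
\[
\sum_{j=0}^\ell \binom{\ell}{j}\frac{(-1)^j}{j+1} = \int_0^1(1-x)^\ell\,\d x = \frac{1}{\ell+1}
\]
produces the factor $3^{\ell+1}/(\ell+1)$. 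Combining with the overall prefactor $\Gamma(1/3)^3/(3\pi\sqrt{3})$ yields $\kappa_\ell$ in the stated form. The only real obstacle is bookkeeping: one must check systematically that the subleading logarithmic terms arising from both $L^{\ell-j}$ and $F_j(\eps)$ are always absorbed into $O((\log B)^\ell)$, and that the contributions from $|t|\leq 1$ and from the $O_N(|t|^{-N})$ error are genuinely negligible at every order.
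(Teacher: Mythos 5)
Your proposal is correct, and it is the same basic strategy as the paper's: extract the asymptotic $I(t) = \frac{\Gamma(1/3)^3}{6\pi\sqrt{3}}\,\frac{e(-kt/B^3)}{|t|} + O_N(|t|^{-N})$ from Example~\ref{example-3cubes}, fold to a cosine integral, and observe that the $1/|t|$ weight produces one extra power of $\log B$. The computational route differs, though. The paper keeps the integral in the variable $t$, splits the range $[2,\infty)$ at $t=B^3/(2\pi)$, replaces $\cos(2\pi t/B^3)$ by $1+O(t^2/B^6)$ on the near range, and integrates $\int_2^{B^3/(2\pi)} (\log t)^\ell \,\mathrm{d}t/t = \frac{(3\log B)^{\ell+1}}{\ell+1}+O((\log B)^\ell)$ in one shot; the far range is shown to contribute $O((\log B)^\ell)$ by a rescaling. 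You instead substitute $u = 2\pi|k|t/B^3$ up front, expand $(\log u + L)^\ell$ binomially, and resum via $\sum_j \binom{\ell}{j}\frac{(-1)^j}{j+1} = \frac{1}{\ell+1}$. Both are clean; the paper's avoids the binomial identity at the price of a two-piece split of the $t$-range, while yours dispenses with the split at $B^3/(2\pi)$ at the price of the binomial bookkeeping. Your closing remark about tracking subleading terms is well taken but harmless: every drop from a leading power of $L$ or of $\log\eps$ is exactly one power of $\log B$, so everything falls into $O((\log B)^\ell)$ as claimed.
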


\begin{proof}
Combining  \eqref{eq:trivial} with  \eqref{eq:I(t)} in 
\eqref{qq}, we readily obtain
$$
J_\ell(B)=
\frac{\Gamma(\frac{1}{3})^3}{3\pi \sqrt{3}}
\int_{2}^\infty \frac{\cos(2\pi t/B^3) (\log t)^\ell}{t} \d t +O_\ell(1).
$$
We have $\cos(\theta)=1+O(\theta^2)$ for $|\theta|\leq 1$. Hence 
\begin{align*}
\int_{2}^{B^3/(2\pi)} \frac{\cos(2\pi t/B^3) (\log t)^\ell}{t} \d t
&=
\int_{2}^{B^3/(2\pi)} \frac{(\log t)^\ell}{t} \d t  +O_\ell((\log B)^\ell)\\
&=
\frac{3^{\ell+1}}{\ell+1} \cdot (\log B)^{\ell+1} +O_\ell((\log B)^\ell).
\end{align*}
On the other hand, we have 
\begin{align*}
\int_{B^3/(2\pi)}^\infty \frac{\cos(2\pi t/B^3) (\log t)^\ell}{t} \d t
&=
\int_{1}^\infty \frac{\cos(t)(\log (B^3t/(2\pi)))^\ell}{t} \d t  +O_\ell((\log B)^\ell)\\
&=
O_\ell((\log B)^\ell),
\end{align*}
since
$$
\int_{1}^\infty \frac{\cos(t)(\log t)^{j} }{t} \d t \ll_j 1,
$$
for any $j\geq 0$.
Putting these estimates together yields the lemma.
\end{proof}

Taking $\ell=0$, it  follows from this result that 
$$
\int_{-\infty}^\infty I(t) \d t\sim 
\frac{\Gamma(\frac{1}{3})^3}{\pi\sqrt{3}} \cdot
 \log B.
$$
On the other hand, it follows from Proposition 
\ref{prop:arch-volume-expansion} that 
$
\mu_\infty(B)\sim \mu_D \log B,
$
where
$\mu_D$ is defined in
\eqref{eq:min-stratum-integral}.
Although we omit details, one can adopt the argument in~\eqref{eq:norm-omega}  to prove that 
\begin{equation}\label{eq:table}
 \mu_D = 
 \frac{\Gamma(\frac{1}{3})^3}{\pi\sqrt{3}} .
\end{equation}
Thus  Hypothesis \ref{hyp:garlic} is also true in this case.

\subsection{Application of the heuristic}

We have already seen in 
\eqref{eq:k=1} that the surface $U$ can contain $\mathbb{A}^1$-curves over $\ZZ$, depending on the choice of $k$. (In fact, the $\mathbb{A}^1$-curves of degree at most $4$ have all been identified by Segre \cite[Thm.~XII]{segre}.)
Thus, we let
 $N_U^\circ(B)$ be the counting function 
defined in \eqref {eq:restrict-count}, where 
$U(\ZZ)^\circ$ is obtained by removing those points in $U(\ZZ)$ that are contained in any such curve. 
We are now ready to reveal what our heuristic says about 
$N_U^\circ(B)$ when  $k$ is not a cube, so that 
 $\rho_U=0$ and $b=1$ in
Heuristic~\ref{heur:pre}.  
On  applying Lemma \ref{lem:jiB}, we are therefore led to the  following expectation,
which  fully recovers Heath-Brown's heuristic~\cite{33}.

\begin{heuristic}\label{con:HB-noncube}
Let $k\in \ZZ$ be a non-cube that is not congruent to 
$ \pm 4\bmod{9}$. 
 Let $U\subset \AAA^3$ be the cubic surface defined by 
\eqref{3cubes}.
Then  
 \[
N_U^\circ(B)
\sim 
 \frac{\Gamma(\frac{1}{3})^3}{\pi\sqrt{3}} 
 \cdot \prod_p\sigma_p\cdot \log B,
  \]
  as $B\to \infty$.  Explicit expressions for $\sigma_p$ are given by 
  \eqref{eq:K1}--\eqref{eq:K3} when $k$ is cube-free. 
\end{heuristic}

When $k$ is not a cube,
the arithmetic of  $U$ has been studied by Colliot-Th\'el\`ene and Wittenberg \cite{CTW}, with the aim of understanding the effect of the Brauer group  on the integral Hasse principle. 
 In this setting, it follows from \cite[Props.~2.1 and 3.1]{CTW} that 
 $\Br(U)/\Br(\QQ)\cong \ZZ/3\ZZ$. 
 Although it is found in \cite[Thm.~4.1]{CTW} that there is no obstruction to the Hasse principle, it can certainly happen that the  Brauer group obstructs strong approximation. When $k=3$, for example, it was discovered
 by Cassels \cite{cassels} that any  point $(x,y,z)\in U(\ZZ)$ must satisfy $x\equiv y\equiv  z \bmod{9}$.
 (This is explained by the Brauer--Manin obstruction in \cite[Remark~5.7]{CTW}.)
 In general, for any cube-free $k\in \ZZ$, 
 let $\mathcal{A}$ be a non-trivial class of the Brauer group from \cite[Prop.~2.1]{CTW}. 
For any  prime $p\neq 3$  such that $v_p(k)=0$, the evaluation of $\mathcal A$ at
any local point at $p$ is equal to $0$ since both the surface and the class $\mathcal A$
have good reduction at such primes. Thus there is no obstruction to strong approximation at these primes. On the other hand, it  follows from 
 \cite[Prop.~4.6]{CTW} that 
there is an   obstruction to strong approximation at any prime $p\neq 3$ for which 
 $v_p(k)\in \{1,2\}$ and that these are the only obstructions.
It is natural to 
expect that the 
the local factors $\sigma_p$ in 
 Heuristic \ref{con:HB-noncube} 
 should be 
 modified to take into account the possible failures of strong approximation that occur when $p\mid 3k$ and we conjecture  that  Heuristic \ref{main-heur} holds with 
$\gamma_U=3$ and 
$V=(D(\RR) \times U(\A^{\mathrm{fin}}_\ZZ))^{\Br U}$, where the pairing with $\Br(U)\cong \Br(X)$ is the restriction of the usual Brauer--Manin pairing on $X(\A_\QQ)$.
 In their work \cite[Sec.~2A]{booker'},   Booker and Sutherland have provided numerical evidence 
 that the constant  in Heuristic \ref{con:HB-noncube} 
  is correct on average, and so 
  we expect that Brauer--Manin  obstruction cuts out $\frac{1}{3}$ of the adelic points, as for the case   $k=3$.

 \section{Sums of cubes: higher rank}\label{s:gen_cubes}

We proceed by  investigating $N_U^\circ(B)$ for the polynomial \eqref{3cubes} when $k$ is a cube, and  secondly, for 
the polynomial 
$f(x,y,z)=x^3+ky^3+kz^3-1$  when $k>1$ is a  square-free integer. In both cases we have $b=1$. 
We shall find that  $\rho_U=3$ in the former case and $\rho_U=2$ in the latter. 

\subsection{Representations of a cube as a sum of three cubes}\label{s:one_cube}
We begin by studying  the cubic surface $U\subset \AAA^3$ defined  by 
\eqref{3cubes} when $k$ is a cube, having already seen in   \eqref{eq:onion} that   $\rho_U=3$. 
Thus it follows from Conjecture \ref{con1} that $N_U^\circ(B)=O((\log B)^4)$.
It is natural to appeal to Theorem \ref{thm:trivial-character} in order 
to get an analogue of Heuristic \ref{con:HB-noncube}
for the case that $k$ is a cube. On 
returning to the setting of Proposition \ref{prop:local-densities-convergence-factors}, 
we have $\zeta(s,D)=\zeta(s)$. Moreover,
if
 $X\subset \PP^3$ 
 is the  compactification of $U$, then 
 it follows from 
Lemma 3.3 and Proposition 3.6 in \cite{pt2} that
$
L(s,\Pic( X_{\bar{\QQ}}))= \zeta(s)\zeta_K(s)^3,
$
where $\zeta_K(s)$ is the Dedekind zeta function associated to 
$K=\QQ(\sqrt{-3})$. But $\zeta_K(s)=\zeta(s)L(s,\chi)$, where $L(s,\chi)$ is the Dirichlet $L$-function associated to the 
real Dirichlet character 
\begin{equation}\label{eq:dirichlet}
\chi(n)=\begin{cases}
(\frac{-3}{n}) & \text{ if $3\nmid n$,}\\
0 & \text{ if $3\mid n$.}
\end{cases}
\end{equation}
It therefore follows that 
$
L(s,\Pic(X_{\bar{\QQ}}))= \zeta(s)^4L(s,\chi)^3,
$
whence
\[
  \lambda_0 = \lim_{s\to 0} s^{3}  \zeta(s+1)^3L(s+1,\chi)^3 =L(1,\chi)^3=
  \frac{\pi^3}{3^{4}\sqrt{3}},
\]
by Dirichlet's class number formula. 
Moreover, 
$$
  \lambda_p = 
  \zeta_p(1,D)L_p(1,\Pic( X_{\bar{\QQ}}))^{-1}=
  \left(1-\frac{1}{p}\right)^{3}\left(1-\frac{\chi(p)}{p}\right)^{3}.
$$
Thus 
Theorem \ref{thm:trivial-character} suggests the heuristic
$$
N_U^\circ(B)\sim 
\lambda_0 
\prod_p\lambda_p\sigma_p\cdot r(B),
$$
where
$$
r(B)=\frac{\left(\tfrac{3}{2}\right)^3 J_0(B) (\log B)^3}{3!}
-\frac{\left(\tfrac{3}{2}\right)^2 J_1(B)(\log B)^2}{2!}
+
\frac{\left(\tfrac{3}{2}\right) J_2(B)\log B}{2!}-
\frac{J_3(B)}{3!}.
$$
But  it follows from   Lemma \ref{lem:jiB} that 
$
r(B)\sim C(\log B)^4,
$
with 
$$
C=
\frac{\left(
\tfrac{3}{2}\right)^3 \kappa_0}{6}
-\frac{\left(\tfrac{3}{2}\right)^2 \kappa_1}{2}+
\frac{\left(\tfrac{3}{2}\right) \kappa_2}{2}-
\frac{\kappa_3}{6}=0.
$$
Thus we seem to run into trouble 
when  applying our circle method heuristic to this particular case.

Instead, we appeal to  Heuristic 
 \ref{main-heur}. 
When $k$ is a cube it follows from Segre \cite{segre} that the compactification $X\subset\PP^3$ is  $\QQ$-rational. In particular, the Brauer group $\Br(X)$ is trivial. Since we also have 
$\Br(U)\cong \Br(X)$, by \cite[Prop.~3.1]{CTW},
it follows that  Brauer group considerations don't  require us to make any adjustment to the leading constant. In this way, on recalling Lemma \ref{lem:jiB},  we are led to expect that 
\begin{equation}\label{eq:leek}
N_U^\circ(B)\sim 
\gamma_U
\cdot 
\frac{\pi^2\Gamma(\frac{1}{3})^3}{3^5}
\cdot 
\prod_p \left(1-\frac{1}{p}\right)^{3}\left(1-\frac{\chi(p)}{p}\right)^{3}\sigma_p \cdot (\log B)^4
 \end{equation}
for some $\gamma_U\in \QQ_{>0}$, where
     $ \sigma_p = \lim_{\ell\to\infty}p^{-2\ell}\nu(p^\ell)$.

We proceed to study this  numerically when $k=1$.
 We shall need to remove the set of integral points lying on the infinite family of $\AAA^1$-curves found by Lehmer~\cite{lehmer}. Coccia has shown that this set is thin~\cite[p.~371]{coccia}, while its complement is not~\cite[Thm.~8]{coccia}.  
We can easily get explicit expressions for the local densities $\sigma_p$ in this case.
Thus it follows from  \eqref{eq:K1} that $
\sigma_3=\nu(27)/27^2=2.
$
If $p\neq 3$ we can assess $\sigma_p$
via \eqref{eq:K2} and \eqref{eq:K3}, which  leads to the expression
\[
\sigma_p=
1 +\frac{3(1+\chi(p))}{p}-\frac{a_p(1+\chi(p))}{2p^2}.
\]
The expectation $N_U^\circ(B)\sim \gamma_U \cdot c\cdot (\log B)^4$ now   follows from \eqref{eq:leek},
 with   $$
  c=\frac{56\pi^2\Gamma(\frac{1}{3})^3}{3^{10}}
  \cdot
  \prod_{p\equiv 2\bmod{3}} 
\left(1-\frac{1}{p^2}\right)^{3}
\prod_{p\equiv 1\bmod{3}} 
\left(1-\frac{1}{p}\right)^{6}
\left(
1 +\frac{6}{p}-\frac{a_p}{p^2}\right).
$$
Evaluating the Euler product for $p\leq 10^8$ results in
$c\approx 0.0958$.

Based on his work with Booker, Sutherland has determined all integer solutions of
$x^3+y^3+z^3=1$ with 
$
  \max\{\abs{x},\abs{y},\abs{z}\}\le B_{\max}=\sqrt[3]{2}\cdot 10^{15},
$
excluding those on lines. We filtered out solutions on the first three embedded $\bA^1$-curves that were discovered by  Lehmer~\cite[Thm.~A]{lehmer}. The remaining curves have degree $\ge 22$ and contribute negligibly many points.
Let $N(B)$ denote the contribution to $N_U(B)$ from the points not on one of the three curves of lowest degree. 
We determined a least squares linear regression of $\log N(B)$ with respect to $\log \log B$. In this, as  in all regressions in this paper, the input is the unweighted set of vectors 
$(\log \log H(P), \log N(H(P))$ such that $P$ is an integral point with $\sqrt{B_{\max}}\leq H(P)\leq  B_{\max}$ . In this way, we obtain the estimate
\[
  \log N(B) \approx \sigma \log \log B + \delta,
\]
with 
$\sigma = 3.75$ and $\delta=-3.48$,  as illustrated
 in Figure~\ref{fig:sums-of-cubes}.
  This seems to be compatible with 
 \eqref{eq:leek}, which predicts  $\sigma = 4$. 
 We will take $\gamma_U = \frac{7}{72}$ in  \eqref{eq:leek}, which yields the modified constant $c’ = \frac{7}{72} \cdot c \approx 0.00931$. The estimate 
 $$
 c_{\exp} = \frac{N(B_{\max})}{(\log B_{\max})^4} \approx 0.013 
 $$
 for the leading constant is roughly four thirds the size of  this prediction, 
 though as reflected in 
 Figure~\ref{fig:sums-of-cubes-quotient}, 
  it most likely overestimates the true leading constant. In summary, the modified leading constant seems to bring the prediction closer to the actual data.

  \begin{figure}
  \begin{center}
    \includegraphics[width = .8\textwidth]{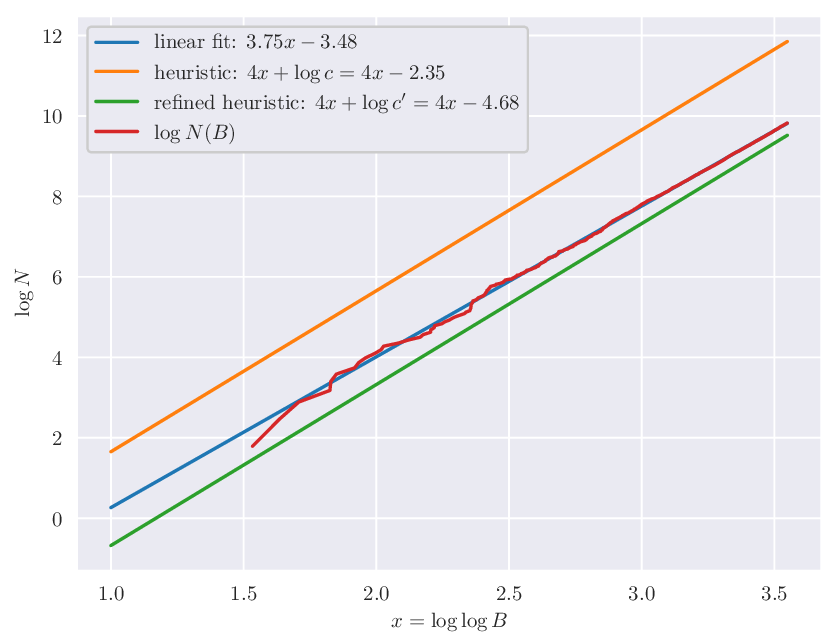}
    \caption{A comparison of $N(B)$ and a linear fit with the heuristic.}\label{fig:sums-of-cubes}
  \end{center}
\end{figure}

\begin{figure}
  \begin{center}
    \includegraphics[width = .8\textwidth]{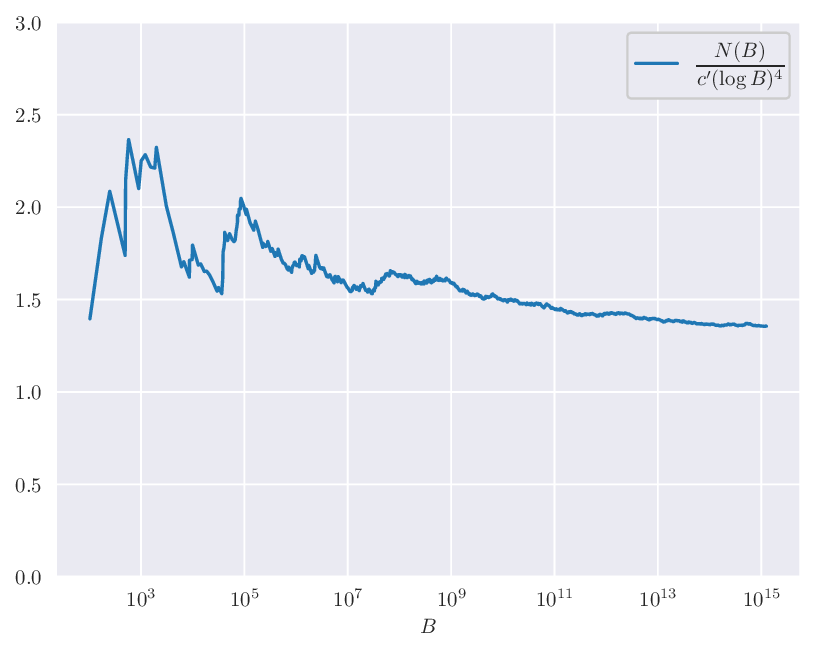}
    \caption{A comparison of $N(B)$ with $c'(\log B)^4$.}\label{fig:sums-of-cubes-quotient}
  \end{center}
\end{figure}

It remains to justify the numerical value  $\gamma_U=\frac{7}{72}$
in \eqref{eq:leek}. 
In the setting of rational points on Fano varieties, 
as  in \eqref{eq:sky}  (and further described in \cite[p.~335]{peyre-bordeaux}), 
Peyre's prediction for the leading constant involves a factor $\alpha_X$ that depends on the geometry of the effective cone $\Eff(X)\subset \Pic(X)_\RR\cong \RR^4$. Denoting by $\Eff(X)^\vee\subset\Pic(X)_\RR^*$ the dual of this cone, it can be described as an integral
\begin{equation}\label{eq:alpha-integral}
  \alpha_X =
  \frac{1}{3!}
\int_{\Eff(X)^\vee} e^{-\langle t, \omega_X^\vee\rangle},
\end{equation}
or as a volume
\begin{equation}\label{eq:alpha-volume}
  \alpha_X = \vol_H\{t\in \Eff(X)^\vee : \langle t, \omega_X^\vee\rangle = 1\},
\end{equation}
for the hyperplane volume normalised by $\omega_X$ and the Picard lattice.
More generally, 
as explained by Batyrev and Tschinkel 
\cite[Def.~2.3.13]{BatT},
for arbitrary height functions $H_\cL$ associated with a metrised line bundle $\cL$ such that $\omega_X^\vee = \cL^{\otimes a}$ is a multiple of it, the anticanonical bundle $\omega_X^\vee$ has to be replaced by $\cL$ in both formulas.

In the context of integral points, formulae such as those described by Santens~\cite[Conj.~6.6 and Thm.~6.11]{tim} and Wilsch~\cite[Sec.~2.5]{wilsch} have the feature that the effective cone appearing in \eqref{eq:alpha-integral} and \eqref{eq:alpha-volume} needs to  be replaced by that of a certain subvariety $V$ that depends on intersection properties of the boundary divisor $D$. If all components of $D$ share a real point, however, then this subvariety is simply $V=X$, by \cite[Rem.~2.2.9\,(i)]{wilsch}. Moreover, the log anticanonical bundle $\omega_X(D)^\vee$ assumes the role of the canonical bundle in this setting.
For   the Fermat cubic,  the bundle associated with the height function $H(x, y, z) = \max\{|x|, |y|, |z|,1 \}$ is  $\cO(1)\cong \omega_X^\vee$. Since  the log anticanonical bundle is its multiple $\omega_X(D)^\vee \cong \cO_X \cong \omega_X^{\otimes 0}$, it would seem natural to include the factor $\alpha_X = \frac{7}{18}$, as determined by Peyre and Tschinkel~\cite[Prop.~6.1]{pt2}.
However, one further modification seems prudent.
When $\alpha_X$ appears in its form~\eqref{eq:alpha-integral}, the factor $\frac{1}{3!}$ can be interpreted as issuing from a Tauberian theorem for a meromorphic function whose right-most pole is at $s=1$ and is of order $4$. This results  in an expected main term of order $B(\log B)^{3}$ in the Manin conjecture. If such a pole is at $s=0$, this factor  becomes $\frac{1}{4!}$ instead, and the resulting main term is $(\log B)^4$.  It therefore seems natural to believe that $\frac{\alpha_X}{4}$ appears in the leading constant for the counting function $N_U^\circ (B)$, which  leads us to the value $\gamma_U=\frac{7}{72}$.
An alternative point of view on this constant stems from 
Peyre's \emph{all the heights} philosophy \cite{beyond}.
As articulated in \cite[Qn.~4.8]{beyond}, Peyre 
asks whether   an equidistribution property holds for the logarithmic multiheight 
defined in \cite[Def.~4.4]{beyond}. 
Since 100\% of points on Fano varieties whose height is at most $B$ should have height at least $B^{1-\ve}$, 
any equidistribution phenomenon for the normalised multiheights $h(P)/\log B$ needs to be  concentrated on the hyperplane appearing in~\eqref{eq:alpha-volume}; in the setting of  logarithmic growth, this is no longer true, and it is more natural to expect equidistribution on the cone 
$\{t\in \Eff(X)^\vee : \langle t, \omega_X^\vee\rangle \le 1\}$, whose volume is $\frac{1}{4}$ times the volume~\eqref{eq:alpha-volume}.

\subsection{An example with Picard rank two}\label{s:oblong}

We now consider the  smooth cubic surface $U_k\subset \AAA^3$ defined by the polynomial 
$f(x,y,z)=x^3+ky^3+kz^3-1$,  for a square-free integer $k>1$. 
This time, we shall see that Theorem \ref{thm:trivial-character} suggests a meaningful heuristic
for $N_{U_k}^\circ(B)$.
The compactification $X_k\subset \PP^3$ is the smooth cubic surface 
$
x_0^3+ky_0^3+kz_0^3=t_0^3.
$
The geometry of $X_k$ has been studied by Peyre and Tschinkel \cite{pt2} and it  follows from 
\cite[Prop.~6.1]{pt2} that $\rho_{X_k}=3$. 
The divisor $D$ is the  smooth genus $1$ curve 
$V(x_0^3+ky_0^3+kz_0^3)$. 
In particular, we have  $b=1$  
and
$
\rho_{U_k}=\rho_{X_k}-1=2.
$
It follows from \cite[Lemme~1]{ct-k-s} that  the Brauer group $\Br(X_k)$ is trivial and from 
\cite[Thm.~1.1]{julian} that $\Br(U_k)\cong\Br(X_k)$.

\subsubsection{Local densities}

Adapting Lemma \ref{lem:jiB}, it is straightforward to prove that 
\begin{equation}\label{eq:spring}
J_\ell(B)=\kappa_\ell (\log B)^{\ell+1} +O((\log B)^{\ell}),
\end{equation}
in the notation of \eqref{qq}, 
where
$$
\kappa_\ell=\frac{3^{\ell}}{\ell+1} \cdot \frac{\Gamma(\frac{1}{3})^3}{\pi\sqrt{3}k^{2/3}}.
$$

Turning to the non-archimedean densities, we have 
$
   \sigma_p = \lim_{\ell\to\infty}p^{-2\ell}\nu(p^\ell), 
 $
with 
 $
\nu(p^\ell)=  \#\left\{(x,y,z)\in (\ZZ/p^\ell\ZZ)^3: x^3+ky^3+kz^3\equiv 1 \bmod{p^\ell}\right\}.
$
When $p\mid 3k$, the densities can be calculated using a computer, with the outcome that 
\begin{equation}\label{eq:fox1}
\sigma_p=
\begin{cases}
1 & \text{ if $p\mid k$ and $p\equiv 2 \bmod{3}$,}\\
3 & \text{ if $p\mid k$ and $p\equiv 1 \bmod{3}$,}
\end{cases}
\end{equation}
and 
\begin{equation}\label{eq:fox2}
 \sigma_3=
 \begin{cases}
 3 & \text{ if $k\equiv 0 \bmod{9}$,}\\
 2 & \text{ if $k\equiv \pm 1 \bmod{9}$,}\\
\frac{5}{3} & \text{ if  $k\equiv \pm 2 \bmod{9}$,}\\
1 & \text{ if  $k\equiv \pm 3 \bmod{9}$,}\\
\frac{4}{3} & \text{ if $k\equiv \pm 4 \bmod{9}$.}
 \end{cases}
\end{equation}
Recall that any prime $p\equiv 1\bmod 3$ 
admits a unique representation as
$4p=a_p^2+27b_p^2$, for  $a_p,b_p\in \ZZ$ such that 
$a_p\equiv 1 \bmod{3}$ and $b_p>0$.
We can then write 
 $p=\pi\bar\pi$ in $\QQ(\sqrt{-3})$, with 
 $\pi=\frac{1}{2}(a_p+3b_p \sqrt{-3})$.
 Denote by 
 $\omega=\frac{1}{2}(-1+\sqrt{-3})$,  a primitive cube root of unity.

\begin{lemma}\label{lem:sig-rank2}
Let $p\nmid 3k$. Then
$$
\sigma_p=
\begin{cases}
1 &\text{ if $p\equiv 2\bmod{3}$,}\\
1+\frac{6}{p}-\frac{a_p}{p^2}
&\text{ if $p\equiv 1\bmod{3}$ and $(\frac{k}{\pi})_3=1$,}\\
1+\frac{3}{p}+\frac{\frac{1}{2}(a_p+9b_p)}{p^2}
&\text{ if $p\equiv 1\bmod{3}$ and $(\frac{k}{\pi})_3=\omega$,}\\
1+\frac{3}{p}+\frac{\frac{1}{2}(a_p-9b_p)}{p^2}
&\text{ if $p\equiv 1\bmod{3}$ and  $(\frac{k}{\pi})_3=\omega^2$,}
\end{cases}
$$
where $(\frac{\cdot }{\pi})_3$ is the cubic residue symbol associated to $\pi$.
\end{lemma}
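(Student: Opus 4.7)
My plan is to use the formula $\sigma_p=\nu(p)/p^2$ for primes $p\nmid 3k$ of good reduction of $\mU_k$, as in~\eqref{eq:sigmap}, where $\nu(p)=\#U_k(\FF_p)$, and to compute $\nu(p)$ by a direct character sum calculation. The case $p\equiv 2\bmod 3$ is immediate: the map $x\mapsto x^3$ is then a bijection on $\FF_p$, so for each $(y,z)$ there is a unique $x$ with $x^3=1-ky^3-kz^3$, giving $\nu(p)=p^2$ and $\sigma_p=1$.

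From now on I assume $p\equiv 1\bmod 3$. Let $\chi=\left(\frac{\cdot}{\pi}\right)_3$ denote the cubic residue character modulo $\pi$, extended by $\chi(0)=0$, so that $\#\{x\in\FF_p:x^3=w\}=1+\chi(w)+\bar\chi(w)$ for every $w\in\FF_p$. Summing over $(y,z)\in\FF_p^2$ then yields
\[
\nu(p)=p^2+T(k)+\overline{T(k)},\qquad T(k)=\sum_{y,z\in\FF_p}\chi(1-ky^3-kz^3).
\]
To evaluate $T(k)$, I would substitute $u=ky^3$, $v=kz^3$ and use the identity $\#\{y\in\FF_p:ky^3=u\}=1+\bar\chi(k)\chi(u)+\chi(k)\bar\chi(u)$. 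Expanding the resulting product decomposes $T(k)$ into nine triple character sums
\[
S_{i,j}=\sum_{u+v+w=1}\phi_i(u)\phi_j(v)\chi(w)
\]
with $\phi_0=1$, $\phi_1=\chi$, $\phi_2=\bar\chi$. The five sums with $i=0$ or $j=0$ vanish, since the inner sum $\sum_w\chi(w)$ or $\sum_v\phi_j(v)$ is then trivially zero.

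For the four surviving sums I would invoke the standard Gauss--Jacobi formalism: the identities $g(\alpha)g(\beta)=J(\alpha,\beta)g(\alpha\beta)$ (for $\alpha\beta$ nontrivial), $J(\alpha,\bar\alpha)=-\alpha(-1)$, $g(\chi)g(\bar\chi)=p$ (which holds because $\chi(-1)=1$ for a cubic character), and the classical normalisation $J(\chi,\chi)=\pi$. These yield $S_{1,1}=-\pi$ and $S_{1,2}=S_{2,1}=S_{2,2}=p$. Using $\bar\chi(k)^2=\chi(k)$ and $\chi(k)^2=\bar\chi(k)$, one then obtains $T(k)=-\chi(k)\pi+\bar\chi(k)p+2p$, and adding the complex conjugate gives
\[
\nu(p)=p^2+4p+(\chi(k)+\bar\chi(k))p-(\chi(k)\pi+\bar\chi(k)\bar\pi).
\]

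The proof concludes by specialising $\chi(k)=\left(\frac{k}{\pi}\right)_3\in\{1,\omega,\omega^2\}$ and computing the trace $\chi(k)\pi+\bar\chi(k)\bar\pi$ from $\pi=\tfrac12(a_p+3b_p\sqrt{-3})$. A direct calculation gives $\pi+\bar\pi=a_p$, $\omega\pi+\bar\omega\bar\pi=-(a_p+9b_p)/2$, and $\bar\omega\pi+\omega\bar\pi=-(a_p-9b_p)/2$; together with $\omega+\bar\omega=-1$, these produce the three expressions for $\sigma_p=\nu(p)/p^2$ recorded in the lemma. The main technical subtlety is to pair the cubic character $\chi$ with the prime $\pi$ so that $J(\chi,\chi)$ is exactly the $\pi$ of the statement (normalised by $a_p\equiv 1\bmod 3$ and $b_p>0$); any other choice of associate would merely permute the second and third cases of the lemma, so the conclusion is unaffected.
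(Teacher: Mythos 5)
Your proof is correct and takes essentially the same approach as the paper. The only cosmetic difference is the starting point — you write $\nu(p)=p^2+T(k)+\overline{T(k)}$ and expand the indicator $\#\{y:ky^3=u\}$, whereas the paper starts directly from the symmetric triple Jacobi-sum expansion of $\nu(p)$ — but both reduce to the same four non-trivial Jacobi sums, the identities $J(\psi,\psi,\psi)=-J(\psi,\psi)=-\pi$ and $J(\psi,\psi,\bar\psi)=p$, and the same trace computation of $\chi(k)\pi+\bar\chi(k)\bar\pi$ in the three cases.
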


\begin{proof}
It follows from Hensel's lemma that
$
\sigma_p=\nu(p)/p^2.
$
We can use cubic characters to evaluate $\nu(p)$, following the approach in \cite[Rem.~4.2]{pt2} and the various identities recorded in 
\cite[Chapter~8]{ir}.
We begin by writing 
$$
\nu(p)=\sum_{\substack{\chi_i^3=1\\ i=1,2,3}} \chi_1(k^{-1})
\chi_2(k^{-1})J(\chi_1,\chi_2,\chi_3),
$$
where the sum is over all characters $\chi_i: \FF_p^*\to \CC^*$ of order dividing $3$, and 
$$
J(\chi_1,\chi_2,\chi_3)=\sum_{\substack{u,v,w\in \FF_p\\ u+v+w=1}} \chi_1(u)\chi_2(v)\chi_3(w)
$$
is the Jacobi sum. 
If $p\equiv 2\bmod{3}$ then there is only the trivial character and it follows that $\nu(p)=p^2$, which gives the result. 

Suppose next that $p\equiv 1\bmod{3}$. Then $\chi_i^3=1$ if and only if $\chi_i\in \{1, \psi,\bar\psi\}$, where $\psi(\cdot)=(\frac{\cdot }{\pi})_3$ is the cubic residue symbol associated to $\pi$.
$J(\chi_1,\chi_2,\chi_3)=0$ whenever precisely one or two of the characters 
$\chi_1,\chi_2,\chi_3$ is trivial. Hence
\begin{align*}
\nu(p)=~&p^2+
J(\psi,\psi,\bar \psi)(\psi(k^{-2})+2) 
+
J(\bar\psi,\bar\psi, \psi)(\bar\psi(k^{-2})+2) \\
&\quad +
J(\psi,\psi,\psi) \psi(k^{-2}) +
J(\bar\psi,\bar\psi,\bar\psi) \bar\psi(k^{-2}).
\end{align*}
We note that $\psi(-1)=1$ since $-1$ is a cube, and moreover 
$\psi(k^{-2})=\psi(k)$ and 
$\bar\psi(k^{-2})=\bar\psi(k)$.
On appealing to the standard formulae for Jacobi sums, we therefore find that 
$
J(\psi,\psi,\bar \psi)=\tau(\psi)\tau(\bar\psi)=p\psi(-1)=p,
$
where
$$
\tau(\psi)=\sum_{t\in \FF_p} \psi(t)e_p(t)
$$
is the Gauss sum.  Similarly, $J(\bar\psi,\bar\psi, \psi)=p$. Moreover, we have 
\begin{align*}
J(\psi,\psi,\psi)&=-\tau(\psi)^3/p=-J(\psi,\psi),\quad 
J(\bar\psi,\bar\psi,\bar\psi)&=-\tau(\bar\psi)^3/p=-J(\bar\psi,\bar\psi).
\end{align*}
Hence it follows that 
\begin{align*}
\frac{\nu(p)}{p^2}
&=1+\frac{4+c_p(k)}{p}
-\frac{2\Re \left(J(\chi,\chi)\psi(k)\right)}{p^2},
\end{align*}
in the notation of \eqref{eq:cpk}. 
Now  it follows from \cite[Prop.~8.3.4]{ir} and its corollary that 
$J(\psi,\psi)=\frac{1}{2}(a_p+3b_p)+3b_p\omega$. 
Let us write $A_k=2\Re \left(J(\psi,\psi)\psi(k)\right)$ for simplicity.
If $\psi(k)=1$ then  $A_k=2\Re J(\psi,\psi)=a_p$, as claimed in the lemma. If $\psi(k)=\omega$ then 
$A_k=2\Re (\omega J(\psi,\psi))=-\frac{1}{2}(a_p+9b_p)$. 
Finally, if 
 $\psi(k)=\omega^2$ we  get
$A_k=2\Re (\omega^2 J(\psi,\psi))=
-\frac{1}{2}(a_p-9b_p)$. 
\end{proof}

\subsubsection{Application of the heuristic}

We can adapt the parameterisation of 
Lehmer \cite{lehmer} to the present setting. On substituting $k^{\lfloor n/3\rfloor}t^n$ 
for $t^n$ in the Lehmer parametrisation, we are led to 
infinitely many $\mathbb{A}^1$-curves of increasing degree. The curves of lowest degree are
given parametrically by 
$$
x(t)=9kt^3+1, \quad \{y(t),z(t)\}=\{-9kt^4-3t, 9kt^4\},
$$
and 
\begin{align*}
x(t)&=2^43^5k^3t^9-3^2kt^3+2^33^4k^2t^6+1
,\\
\{y(t),z(t)\}&=\{-2^43^5k^3t^{10}-2^43^4k^2t^7
-3^4kt^4+3t,2^43^5k^3t^{10}-
135kt^4\}.
\end{align*}
Let
 $N_k(B)=N_{U_k}^\circ(B)$ be the counting function 
defined in \eqref {eq:restrict-count}, where 
$U_k(\ZZ)^\circ$ is obtained by removing those points in $U_k(\ZZ)$ that are contained in any such curve. 
We are now ready to reveal what our heuristic says about 
$N_k(B)$. 

We have already seen that we may take 
 $\rho_{U_k}=2$ and $b=1$ in Theorem \ref{thm:trivial-character}.
Returning to  Proposition \ref{prop:local-densities-convergence-factors}, 
we have  $\zeta(s,D)=\zeta(s)$ and it follows from 
Lemma~3.3 and Proposition 3.6 in \cite{pt2} that
$
L(s,\Pic(X_{k,\bar{\QQ}}))= \zeta(s)^2\zeta_{K}(s)L(s,\chi)^2,
$
where $\zeta_{K}(s)$ is the Dedekind zeta function associated to 
$K=\QQ(k^{1/3})$ and $L(s,\chi)$ is the Dirichlet $L$-function associated to the 
real Dirichlet character  \eqref{eq:dirichlet}.
Hence we have 
$F(s)= 
\zeta(s+1)\zeta_{K}(s+1)L(s+1,\chi)^2\tilde F(s)$ in \eqref{eq:peter}.
But then, 
on recalling Lemma~\ref{lem:F}, 
we see that 
$\tilde F(0)
=\prod_{p}\lambda_p \sigma_p$,
where
\[     
  \lambda_p = 
  \left(1-\frac{1}{p}\right)    \left(1-\frac{\chi(p)}{p}\right)^2 \zeta_{K,p}(1)^{-1}.
\]
Moroeover, 
\[
  \lambda_0 = \lim_{s\to 0} s^{2}  \zeta(s+1)\zeta_{K}(s+1)L(s+1,\chi)^2 =\frac{\pi^2}{27}\cdot \lim_{s\to 1} (s-1)\zeta_{K}(s),
\]
since $L(1,\chi)^2=\pi^2/27$. 

Next, we clearly have 
$$
r(B)=\frac{\left(\tfrac{3}{2}\right)^2 J_0(B) (\log B)^2}{2!}
-\frac{\left(\tfrac{3}{2}\right) J_1(B)\log B}{1!}
+
\frac{J_2(B)}{2!},
$$
in  Theorem \ref{thm:trivial-character}. But then \eqref{eq:spring}  yields
$
r(B)=C_\infty(\log B)^3,
$
with 
$$
C_\infty=
\frac{\left(\tfrac{3}{2}\right)^2 \kappa_0}{2!}
-\frac{\left(\tfrac{3}{2}\right) \kappa_1}{1!}+
\frac{\kappa_2}{2!}=
\frac{\sqrt{3}\Gamma(\frac{1}{3})^3}{8\pi k^{2/3}}.
$$
Similarly to  \eqref{eq:table}, we 
note that $C_\infty=\frac{3}{8}\cdot \mu_D$, where $\mu_D$ is the constant appearing in Proposition 
\ref{prop:arch-volume-expansion}, related to the real density. 
In summary, we are  led to the following expectation. 

\begin{heuristic}\label{con:higher-rank}
Let $k>1$ be a square-free integer and let 
 $U_k\subset \AAA^3$ be the cubic surface defined by 
 $x^3+ky^3+kz^3=1$.
Then  
 \[
N_k(B)
\sim c_\cir^{(k)}  (\log B)^3,
  \]
  as $B\to \infty$,
  where 
  \begin{align*}
  c_\cir^{(k)}=~&
  \frac{\pi\Gamma(\frac{1}{3})^3}{72 \sqrt{3}k^{2/3}}
\cdot  
  \lim_{s\to 1} (s-1)\zeta_{K}(s) 
\cdot 
\prod_p 
  \left(1-\frac{1}{p}\right)    \left(1-\frac{\chi(p)}{p}\right)^2 \zeta_{K,p}(1)^{-1}
\sigma_p.
\end{align*}
Explicit expressions for $\sigma_p$ are given by Lemma \ref{lem:sig-rank2} for $p\nmid 3k$, and by 
\eqref{eq:fox1}--\eqref{eq:fox2} for $p\mid 3k$.
\end{heuristic}

\subsubsection{Numerical data}

\begin{figure}
  \begin{center}
    \includegraphics[width = .8\textwidth]{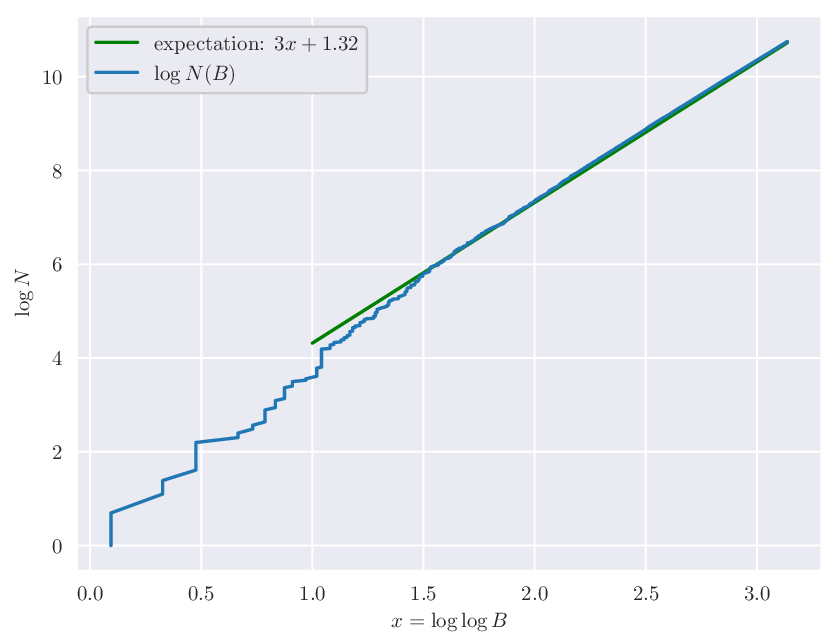}
    \caption{A comparison of $N(B)$ with the circle method prediction.
    }\label{fig:var-sums-of-cubes}
  \end{center}
\end{figure}

\begin{figure}
  \begin{center}
    \includegraphics[width = .8\textwidth]{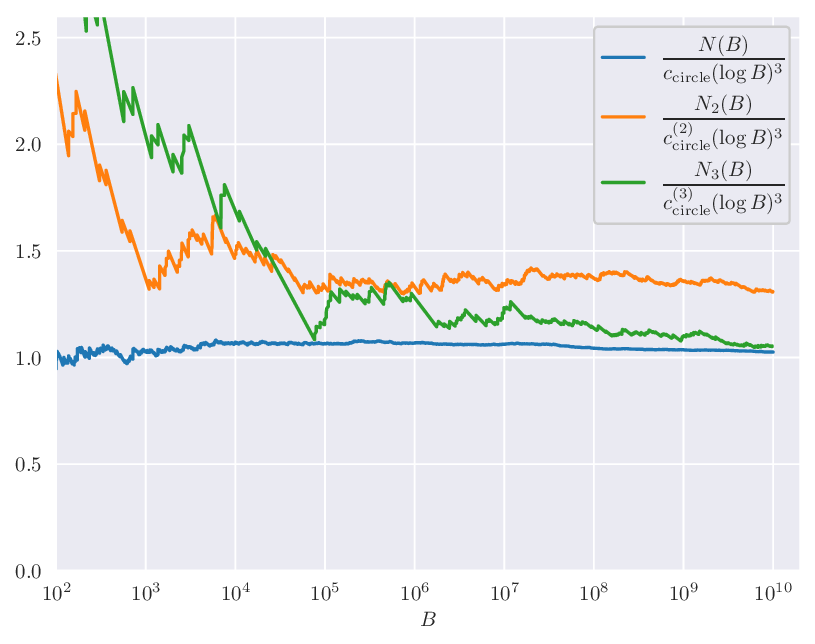}
    \caption{A comparison of $N(B)$ with $(\log B)^3$.}\label{fig:var-sums-of-cubes-quotient}
  \end{center}
\end{figure}

\begin{figure}
  \begin{center}
    \includegraphics[width = .8\textwidth]{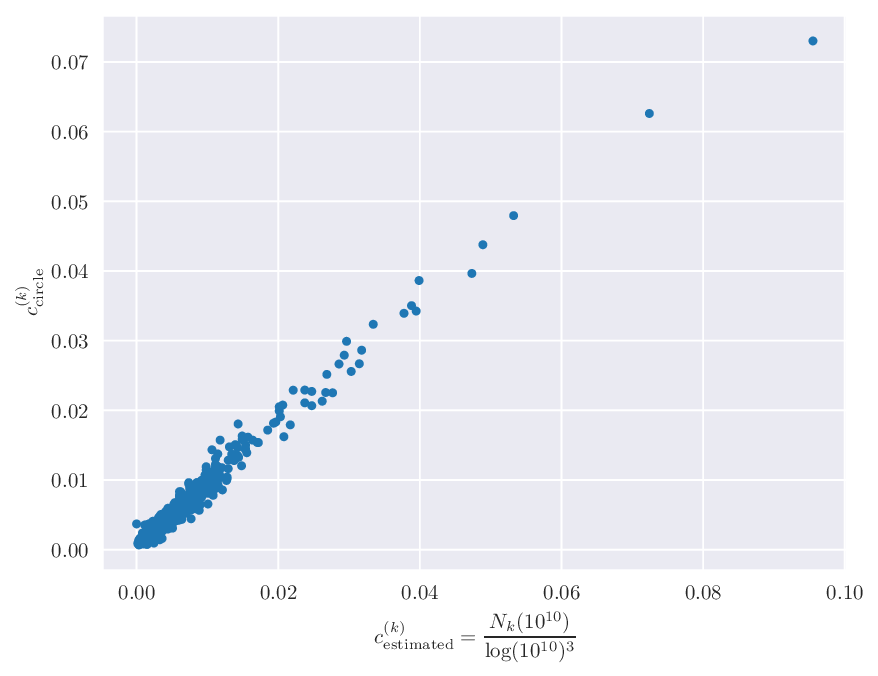}
    \caption{A scatter plot comparing 
the predicted leading constants to the heuristic leading constants determined from the data.}
      \label{fig:var-scatter}
  \end{center}
\end{figure}

We have determined all integer points $(x,y,z)\in U_k(\ZZ)$ with 
$\max\{\abs{x},\abs{y},\abs{z}\}\le 10^{10}$, for all square-free integers $2\leq k\leq 1000$.
We removed all points contained in the $\bA^1$-curves of degrees $4$ and $10$, that we identified above. The higher degree curves contribute  negligibly and the numerics don't suggest the presence of any further $\bA^1$-curves of low degree. 
 Let 
$$
N(B) = \sum_{
\substack{2\le k\le 1000\\ \text{$k$ square-free}}} N_{k}(B).
$$
The sum of the predicted constant over all relevant $k$ is
$$
c_{\text{circle}}
=\sum_{
\substack{2\le k\le 1000\\ \text{$k$ square-free}}}  c_{\text{circle}}^{(k)}
 \approx 3.73.
$$
Figure \ref{fig:var-sums-of-cubes} confirms that our prediction is very close to the numerical data. 
Moreover, 
a least squares linear regression of $\log N(B)$ against $\log \log B$ results in a fit
$$
\log N(B) \approx 3.02 \log \log B + 1.31,
$$ 
which suggests  the experimental leading constant 
  $c_{\expm} = \exp(1.31) \approx 3.71$. 
 This agrees  with Heuristic \ref{con:higher-rank}, which predicts slope $3$
 and leading 
 constant 
$c_{\text{circle}}\approx 3.73$.

In fact, as seen in 
     Figure  \ref{fig:var-sums-of-cubes-quotient}, 
both the cumulative counting function $N(B)$ as well as individual counting functions (depicted for $k=2$ and $k=3$) align rather well with  the circle method prediction for $B \le 10^{10}$. 
Moreover,  
 in Figure  \ref{fig:var-scatter} we have included a scatter plot, in which each blue dot represents a surface in the family;
    on the $x$-axis is the prediction for the constant coming from the circle method and on the $y$-axis is the ratio $N_{U_k}^\circ(B)/(\log B)^3$, for $B=10^{10}$.
    The correlation is very good. 
Note that both $C_\infty$ and the product of non-archimedean densities 
vary significantly with the parameter $k$, and the presence of both factors in $c_\text{circle}^{(k)}$ is necessary to achieve the correlation seen in Figure \ref{fig:var-scatter}. 
   Indeed, estimating 
   $$\log(c_\expm^{(k)}) \approx \log(c_{\text{circle}}^{(k)}) + \mu
   $$
   results in an $R^2$-value of $0.90$, while analogous estimates using only $C_\infty$ or $\lambda_0 \prod_p \lambda_p \sigma_p$ 
   instead of the full circle method  constant
   result in $R^2$-values of $0.40$ and $0.38$, respectively. 

Finally, we compare our findings with Heuristic \ref{main-heur}, recalling that  $\Br(U_k)$ is trivial in this case. Thus 
Heuristic \ref{main-heur} predicts that
$N_U^\circ(B)\sim 
\gamma_U\cdot \tau_{U,H}(V)
 (\log B)^3
$
for  $\gamma_U\in \QQ_{>0}$
and $V=D(\RR)\times 
U(\A^{\mathrm{fin}}_\ZZ)$.
If we take 
 $\gamma_U=\frac{3}{8}$, 
we will therefore have  $c_{\mathrm{h}}=c_{\text{circle}}^{(k)}$,
in the notation of  Heuristic \ref{con:higher-rank}, whence   Conjecture \ref{k>1}.

\section{The Baragar--Umeda examples}\label{s:eg2}

In this section we examine the surfaces appearing in Table \ref{tb:surface-params} that were
studied by Baragar and Umeda \cite{baragar}. 
In Figure \ref{fig:BU} we have plotted the  integer points of low height on the first surface in the table.
Let $U\subset \AAA^3$ be any  surface in 
Table \ref{tb:surface-params} 
 and let $X\subset \PP^3$ be its compactification. 
In particular $X$ is a clearly a smooth cubic surface. 
An analysis of the lines contained in $X$, similar to the calculations in \cite{colliot}, reveals that 
$\rho_U=0$. 
The divisor at infinity  is $D=X\setminus U$, which is equal to  $V(dx_0y_0z_0)$, a union of three lines
\begin{equation}\label{eq:triangle-names}
  L_1 = V(t_0, x_0), \quad L_2 = V(t_0,y_0), \quad L_3 = V(t_0, z_0)
\end{equation}
It follows that $\rho_U=0$ and $b=2$ in Heuristic~\ref{heur:circle-method}, and so the  exponent of $\log B$ in the heuristic agrees with the asymptotic formula  \eqref{eq:BU-asy}.

\begin{figure}
  \begin{center}
    \includegraphics[width = .8\textwidth]{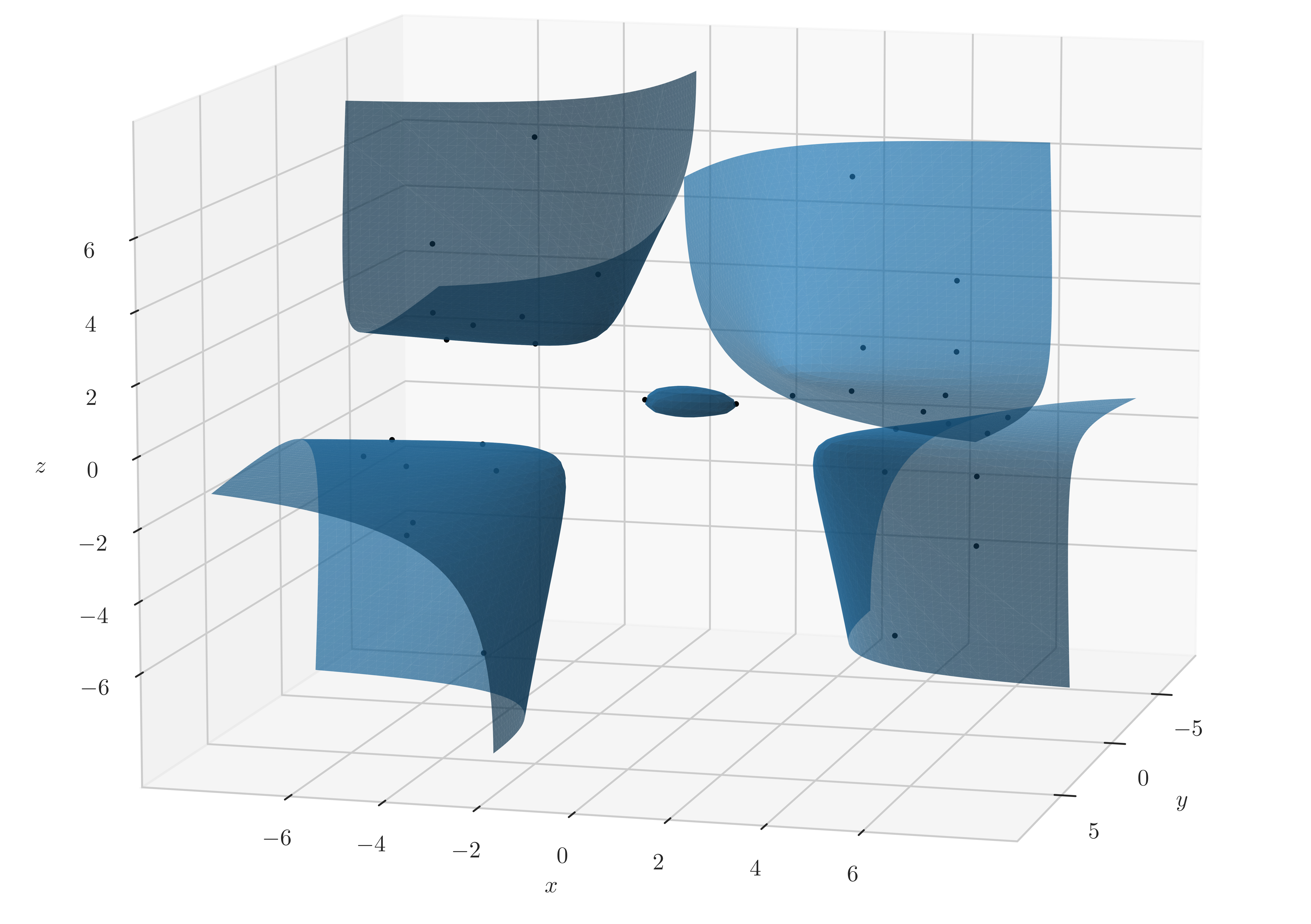}
    \caption{Integer points  on the surface $x^2+5y^2+5z^2=5xyz+1$ of height $\leq 10$. }\label{fig:BU}
  \end{center}
\end{figure}

\subsection{The leading constant}
We proceed by studying  the 
constant  in Heuristic~\ref{heur:circle-method} and comparing it to the  constant $c_\BU$ in Table~\ref{tb:surface-params}, for the different choices of coefficient vectors.
We shall find that they do not agree, even after making natural  modifications along the lines suggested in Heuristic \ref{main-heur}.

\subsubsection{The number of solutions modulo $p$ }

Henceforth we focus on the surfaces~\eqref{eq:bu-equation} for  square-free  $a,b,c,d\in \NN$ such that 
$4abc-d^2\neq 0$ and $d$ is divisible by $a,b$ and $c$. Moreover, we assume
that none of 
$d^2-4abc, a(d^2-4abc), \dots, c(d^2-4abc)$ is the square of an integer. 
These conditions are clearly satisfied by the six surfaces in Table~\ref{tb:surface-params}.
We let $S$ be the set of  prime divisors of $2abcd(d^2-4abc)$.

Let $p\not \in S$ and recall the definition~\eqref{eq:nu} of $\nu(p^k)$.
We need to calculate this quantity when $k=1$ 
While it is  possible to evaluate $\nu(p)$ using~\eqref{eq:weil}, 
we shall give an elementary treatment using character sums, based on the expression
\begin{equation}\label{eq:stepping}
\nu(p)=p^2+\frac{1}{p}\sum_{h\in \FF_p^*} \sum_{x,y,z\in \FF_p} e_p\left(hf(x,y,z)\right).
\end{equation}
We will need to  recollect some relevant facts about character sums. 
Let $A,B,C\in \ZZ$.
The quadratic Gauss sum is
\begin{equation}\label{eq:gauss}
\sum_{x\in \FF_p} e_p(Ax^2+Bx)= \ve_p \sqrt{p} \left(\frac{A}{p}\right) e_p(-\bar{4A}B^2),
\quad
\text{ if $p\nmid 2A$},
\end{equation}
where $\bar{4A}$ is the multiplicative inverse of $4A$ modulo $p$ and 
\[
\ve_p=\begin{cases}
1 & \text{ if $p\equiv 1\bmod{4}$,}\\
i & \text{ if $p\equiv 3\bmod{4}$.}
\end{cases}
\]
When $B=0$ and $p\nmid 2A$, we note that the sum on the left hand side of~\eqref{eq:gauss} can 
be written in the equivalent form
\[
\sum_{x\in \FF_p} e_p(Ax^2)=
\sum_{x\in \FF_p} \left(1+\left(\frac{x}{p}\right)\right) e_p(Ax)=
\sum_{x\in \FF_p}\left(\frac{x}{p}\right) e_p(Ax).
\]
Next, the Legendre sum is
\begin{equation}\label{eq:legendre}
\sum_{x\in \FF_p} \left(\frac{Ax^2+Bx+C}{p}\right)= 
\begin{cases}
-(\frac{A}{p}) & \text{ if $p\nmid A$ and $p\nmid B^2-4AC$,}\\
(p-1)(\frac{A}{p}) & \text{ if $p\nmid A$ and $p\mid B^2-4AC$.}
\end{cases}
\end{equation}
We are now ready to reveal our calculation of $\nu(p)$.

\begin{lemma}\label{lem:local1}
For any  $p\not \in S$, we have 
\[
\frac{\nu(p)}{p^2}= 1+\frac{1}{p}\left(\frac{d^2-4abc}{p}\right)\left(1+\left(\frac{a}{p}\right)+
\left(\frac{b}{p}\right)+\left(\frac{c}{p}\right)\right) +\frac{1}{p^2}.
\]
\end{lemma}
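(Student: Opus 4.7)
The plan is to evaluate the inner exponential sum in \eqref{eq:stepping}. After factoring out $e_p(-h)$, set $U_h = \sum_{x,y,z \in \FF_p} e_p(h(ax^2+by^2+cz^2-dxyz))$, so that the task reduces to computing $\sum_{h \in \FF_p^*} e_p(-h) U_h$. I will fix $z$ first, evaluate the inner double sum $V_h(z) = \sum_{x,y} e_p(h(ax^2+by^2-dxyz))$, and then sum over $h$ and $z$.

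Completing the square (first in $x$, then in $y$; valid because $p \nmid 2a$ from $p \notin S$), the quadratic form has discriminant $D(z) = d^2z^2 - 4ab$. Two applications of the Gauss sum formula \eqref{eq:gauss} yield $V_h(z) = p \bigl(\frac{D(z)}{p}\bigr)$ when $p \nmid D(z)$ --- crucially independent of $h$, since the two $\ve_p$ factors combine as $\ve_p^2 = \bigl(\frac{-1}{p}\bigr)$ which absorbs into the Legendre symbol --- and $V_h(z) = p^{3/2} \ve_p \bigl(\frac{ha}{p}\bigr)$ on the degenerate locus $Z_0 = \{z : d^2z^2 \equiv 4ab \pmod p\}$, which has cardinality $1 + \bigl(\frac{ab}{p}\bigr)$.

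The non-degenerate contribution to $\sum_{h \ne 0} e_p(-h) U_h$ is then $p \sum_{z \notin Z_0} \bigl(\frac{D(z)}{p}\bigr) \sum_{h \ne 0} e_p(h(cz^2 - 1))$. The inner sum equals $p - 1$ if $cz^2 \equiv 1$ and $-1$ otherwise, splitting this into $p^2 \sum_{cz^2 \equiv 1} \bigl(\frac{D(z)}{p}\bigr) - p \sum_{z \in \FF_p} \bigl(\frac{D(z)}{p}\bigr)$. The second sum equals $-1$ by the Legendre formula \eqref{eq:legendre} with $A = d^2$, $B = 0$, $C = -4ab$ (both nonvanishing hypotheses follow from $p \notin S$), contributing $p$ to the total. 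For the first sum, $cz^2 \equiv 1$ has $1 + \bigl(\frac{c}{p}\bigr)$ solutions and on each of them $D(z) \equiv (d^2-4abc)/c$, yielding $p^2 \bigl(\frac{c(d^2-4abc)}{p}\bigr) + p^2 \bigl(\frac{d^2-4abc}{p}\bigr)$.

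For the degenerate contribution, note that $z \in Z_0$ forces $cz^2 \equiv 4abc/d^2$, so $cz^2 - 1 \equiv -(d^2 - 4abc)/d^2 \not\equiv 0$ (again by $p \notin S$). The companion Gauss identity $\sum_{h \in \FF_p^*} \bigl(\frac{h}{p}\bigr) e_p(ht) = \ve_p \sqrt{p} \bigl(\frac{t}{p}\bigr)$ collapses the $h$-sum against $p^{3/2} \ve_p \bigl(\frac{ha}{p}\bigr)$; the resulting $\ve_p^2 = \bigl(\frac{-1}{p}\bigr)$ cancels the minus sign inside $t$, giving $p^2 \bigl(\frac{a(d^2-4abc)}{p}\bigr)$ for each $z \in Z_0$. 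Multiplying by $|Z_0| = 1 + \bigl(\frac{ab}{p}\bigr)$ and expanding yields $p^2 \bigl(\frac{a(d^2-4abc)}{p}\bigr) + p^2 \bigl(\frac{b(d^2-4abc)}{p}\bigr)$. Adding the non-degenerate and degenerate contributions collapses to
\[
\sum_{h \in \FF_p^*} e_p(-h) U_h = p + p^2 \left(\frac{d^2-4abc}{p}\right)\left(1 + \left(\tfrac{a}{p}\right) + \left(\tfrac{b}{p}\right) + \left(\tfrac{c}{p}\right)\right),
\]
and substituting into \eqref{eq:stepping} and dividing by $p^2$ produces the claimed formula. The main obstacle is purely bookkeeping: tracking the $\ve_p$ factors through both cases and verifying that each invocation of \eqref{eq:gauss} or \eqref{eq:legendre} has its non-vanishing hypothesis supplied by $p \notin S$.
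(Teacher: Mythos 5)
Your proof is correct and follows essentially the same route as the paper: start from \eqref{eq:stepping}, apply the Gauss sum formula \eqref{eq:gauss} in $x$ and then in $y$, split the $z$-sum at the degenerate locus $d^2z^2 \equiv 4ab \bmod p$, and finish with the Ramanujan sum (your orthogonality-of-$e_p(h\cdot)$ argument) together with one more Gauss sum. The only difference is organizational---you pre-package the double Gauss sum over $x,y$ as $V_h(z)$ and simplify the Legendre symbols and $\ve_p$ factors before summing over $h$, whereas the paper carries the nested sums into $\Sigma_1$ and $\Sigma_2$---but the underlying computations are identical.
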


\begin{proof}
Recall from~\eqref{eq:bu-equation} that 
$f(x,y,z)=ax^2+by^2+cz^2-dxyz-1$.
Applying the formula~\eqref{eq:gauss} for Gauss sums in~\eqref{eq:stepping}, we deduce that 
\begin{align*}
\nu(p)
&=p^2+\frac{\ve_p}{\sqrt{p}}\sum_{h\in \FF_p^*}
 \left(\frac{ha}{p}\right)
 \sum_{y,z\in \FF_p} e_p\left(h(by^2+cz^2-1-\bar{4a} d^2y^2z^2)\right)\\
&=p^2+\frac{\ve_p}{\sqrt{p}}\sum_{h\in \FF_p^*}
 \left(\frac{ha}{p}\right)\sum_{z\in \FF_p}
 e_p\left(h(cz^2-1)\right)
 \sum_{y\in \FF_p} e_p\left(h y^2(b-\bar{4a} d^2z^2)\right).
\end{align*}
Next we  evaluate the sum over $y$. If $b-\bar{4a}d^2z^2\not\equiv  0 \bmod{p}$ then 
the inner sum is $\ve_p\sqrt{p} (\frac{h(b-\bar{4a} d^2z^2)}{p})$ by~\eqref{eq:gauss}. Alternatively,  it takes the value $p$.  Thus 
\begin{equation}\label{eq:way}
\nu(p)
=p^2+
\Sigma_1+\Sigma_2,
 \end{equation}
where 
\[
\Sigma_1=
\ve_p^2  \left(\frac{a}{p}\right)\sum_{z\in \FF_p}
\left(\frac{b-\bar{4a} d^2z^2}{p}\right)
 c_p\left(cz^2-1\right)
 \]
 and 
\[
  \Sigma_2=
  \ve_p\sqrt{p}\sum_{h\in \FF_p^*}
 \left(\frac{ha}{p}\right)\sum_{\substack{z\in \FF_p\\ 
 b-\bar{4a}d^2z^2\equiv  0 \bmod{p}}}
 e_p\left(h(cz^2-1)\right).
\]
It  follows from~\eqref{eq:ram} that 
\[
\Sigma_1=
 \ve_p^2  \left(\frac{a}{p}\right)
\left(-\sum_{z\in \FF_p}
\left(\frac{b-\bar{4a} d^2z^2}{p}\right)
+p
\left(\frac{b-\bar{4ac} d^2}{p}\right)\left(1+\left(\frac{c}{p}\right)\right)\right).
\]
We evaluate the sum over $z$ by appealing to~\eqref{eq:legendre}. This yields
\[
-\sum_{z\in \FF_p}
\left(\frac{b-\bar{4a} d^2z^2}{p}\right)
= 
\left(\frac{-\bar{4a} d^2}{p}\right) =\left(\frac{-a}{p}\right).
\]
Thus 
\begin{align*}
\Sigma_1
&=
 \ve_p^2  \left(\frac{a}{p}\right) \left(\frac{-a}{p}\right)
+p \ve_p^2 
\left(\frac{ab-\bar{4c} d^2}{p}\right)\left(1+\left(\frac{c}{p}\right)\right)
\\&=
1
+p 
\left(\frac{d^2-4abc}{p}\right)\left(1+\left(\frac{c}{p}\right)\right),
\end{align*}
since $\ve_p^2=(\frac{-1}{p})$.

Next,
we see that 
\begin{align*}
\Sigma_2
&=
\ve_p\sqrt{p}\sum_{h\in \FF_p}
 \left(\frac{ha}{p}\right)
 e_p\left(h(4abc\bar{d}^2-1)\right) \left(1+\left(\frac{ab}{p}\right)\right)\\
&=
\ve_p\sqrt{p}
 \left( \left(\frac{a}{p}\right)+\left(\frac{b}{p}\right)\right)
\sum_{h\in \FF_p}
 \left(\frac{h}{p}\right)
 e_p\left(h(4abc\bar{d}^2-1)\right).
\end{align*}
The inner sum is another Gauss sum and can be evaluated using~\eqref{eq:gauss}.
 Thus 
\[
\Sigma_2
=
 p \left(
\frac{d^2-4abc}{p}\right)
 \left( \left(\frac{a}{p}\right)+\left(\frac{b}{p}\right)\right).
\]
 Combining our expressions for $\Sigma_1$ and $\Sigma_2$ in~\eqref{eq:way}
and dividing by $p^2$, we arrive at the statement of the lemma.
\end{proof}

\subsubsection{Non-archimedean densities}

Throughout this subsection, let $k=d^2-4abc$ and let $S$ be the set of prime divisors of $2abcdk$.
It is convenient to define Dirichlet characters $\chi_1,\dots,\chi_4$ via the Kronecker symbols
$\chi_i(n) = \left(\frac{D_i}{n}\right)$,
where $D_i=\disc(K_i)$, for
$$
K_1=\QQ(\sqrt{k}), \quad K_2=\QQ(\sqrt{ka}), \quad  K_3=\QQ(\sqrt{kb}), \quad K_3=\QQ(\sqrt{kc}).
$$
In particular, we have  
$
\chi_1(p)=(\frac{d^2-4abc}{p})
$
and 
$$
\chi_2(p)=\chi_1(p)\left(\frac{a}{p}\right), \quad
\chi_3(p)=\chi_1(p)\left(\frac{b}{p}\right), \quad
\chi_4(p)=\chi_1(p)\left(\frac{c}{p}\right),
$$
for $p\not\in S$. Thus 
Lemma~\ref{lem:local1} yields
\begin{equation}\label{eq:nu-explicit-chi}
  \frac{\nu(p)}{p^2}= 1+\frac{\chi_1(p)+\chi_2(p)+\chi_3(p)+\chi_4(p)}{p}
  +\frac{1}{p^2},
\end{equation}
for any such prime.  It follows from \eqref{eq:sigmap}
that 
$$
\sigma_p(s)=
1+\frac{\chi_1(p)+\chi_2(p)+\chi_3(p)+\chi_4(p)}{p^{s+1}} +\frac{1}{p^{s+2}}.
$$

Define 
$  \lambda(s) = L(s,\chi_1)L(s,\chi_2)L(s,\chi_3)L(s,\chi_4)$
and
\[
  \lambda_p(s) = (L_p(s,\chi_1)L_p(s,\chi_2)L_p(s,\chi_3)L_p(s,\chi_4))^{-1}.
\]
With this notation we have 
\[
  F(s) = \prod_p \sigma_p(s) = \lambda(s+1) \prod_p \lambda_p(s+1)\sigma_p(s)
\]
in Lemma \ref{lem:F},
for $\Re(s)>2$.
Now 
\begin{equation}\label{eq:explicit-euler-factors}
  \begin{aligned}
  \lambda_p(s+1)\sigma_p(s)
  &=
  \left(1-\frac{\chi_1(p)}{p^{s+1}}\right)
  \dots 
  \left(1-\frac{\chi_4(p)}{p^{s+1}}\right)\\
  &\qquad \times \left(
  1+\frac{\chi_1(p)+\chi_2(p)+\chi_3(p)+\chi_4(p)}{p^{s+1}}
  +\frac{1}{p^{s+2}}\right),
  \end{aligned}
\end{equation}
for $p\not\in S$, and so
the Euler product $\prod_p \lambda_p(s+1)\sigma_p(s)$ converges absolutely for
$\Re(s)> -\frac{1}{2}$.  In particular, we have 
\begin{equation}\label{eq:F-BU}
\lim_{s\to 0} \left(s^{\rho_U} F(s)\right)=
  F(0)=\lambda(1) \prod_p \lambda_p(1)\sigma_p=
 \prod_p \sigma_p.
\end{equation}
This expression could also have been deduced from  
Proposition \ref{prop:local-densities-convergence-factors}, but we have chosen 
to present an explicit derivation using  Dirichlet $L$-functions.

\subsubsection{The expected leading constant}

We are now ready to record an explicit expression for the expected leading constant 
$c_{\cir}$, say,
in Heuristic \ref{heur:circle-method}, with $\rho_U=0$ and $b=2$. 
Combining Lemma \ref{lem:arch-bu}
with \eqref{eq:F-BU}, it follows that  
\begin{equation}\label{eq:tea}
  c_{\cir} = \frac{6}{d} \cdot \lambda(1) \cdot c_S \cdot c_{S^c},
\end{equation}
with 
\[
  c_S = \prod_{p\in S} \lambda_p(1) \sigma_p \qquad\text{and}\qquad
  c_{S^c} = \prod_{p\not\in S} \lambda_p(1) \sigma_p.
\]
We determine $\lambda(1)$ using Dirichlet's class number formula, $c_S$ by a computer search for points modulo small powers of $p\in S$, and $c_{S^c}$ by multiplying the factors
$\lambda_p(1) \sigma_p$ for $p<10^7$. (Note that the latter are obtain by taking 
$s=0$ in \eqref{eq:explicit-euler-factors}.) 
The results of these computations are summarised in Table~\ref{tb:expected-const}.

\begin{table}[t]
  \tabcolsep1em
  \def\fracrowsep{.5em}
  \begin{tabular}{c@{\hspace{2\tabcolsep}}ll}\toprule
          & $c_{\cir}$ & $c_{\cir}/c_{\mathrm{BU}}$\\\midrule
    (i)   & 2.997816 & 0.5734700 \\ 
    (ii)  & 2.997094 & 1.0107957 \\ 
    (iii) & 1.484675 & 0.6015930  \\ 
    (iv)  & 2.397675 & 0.5910831  \\ 
    (v)   & 1.16853  & 0.4686900\\ 
    (vi)  & 3.331807 & 0.6770839  \\
        \bottomrule
  \end{tabular}
  \caption{The circle method prediction and a comparison to the actual leading constant.}\label{tb:expected-const}
\end{table}

\subsection{Modified expectations}\label{ssec:modified-expectations}

For each of the surfaces in Table~\ref{tb:surface-params}, we note that $U(\RR)$ has five connected components: one bounded component and four unbounded ones.
This is illustrated in Figure \ref{fig:BU} for the first surface in the table. 
On the unbounded components, we have $xyz>0$, and the four components can be distinguished by imposing conditions on the signs of the variables that are compatible with this observation. Denote by $U_0$ the unbounded component with $x,y,z>0$. Due to the symmetry of the equation, it suffices to study this component.

\subsubsection{Hensel's lemma and the place $2$}
While not a failure of strong approximation, we make  the following observation.
\begin{lemma}\label{lem:hensel-failure}
Let $U\subset \AAA^3$ be one of the surfaces in Table \ref{tb:surface-params}. Then
the map
\[
  \mU(\ZZ_2)\to \mU(\ZZ/2^k\ZZ)
\]
is not surjective for any $k$. Indeed, its image consists of half the points 
in $\mU(\ZZ/2^k\ZZ)$  if $k\ge 3$. 
\end{lemma}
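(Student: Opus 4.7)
My plan is a case-by-case analysis for the six surfaces in Table~\ref{tb:surface-params}, unified by the following descent framework. For the surface $\mU$ defined by $f(x,y,z) = ax^2 + by^2 + cz^2 - dxyz - 1$, begin by determining the admissible parity patterns of $(x,y,z)$ modulo $2$ via $f \equiv 0 \pmod 2$. Writing the variables in normal form $x = 2u+\ve_x$, and similarly for $y$ and $z$, compatible with that pattern, substitute into $f$ and divide by the appropriate power of $2$ to obtain a reduced equation $f_1(u,v,w) = 0$. Iterate this descent to uncover, at some small $k$, a congruence condition on $(x,y,z) \pmod{2^k}$ that correlates the residues of different variables and obstructs a full lift to $\ZZ_2$.

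To see the obstruction concretely in surface~(vi), where $a = 1$ and $b = c = d = 2$: the condition $f \equiv 0 \pmod 2$ forces $x$ odd, and a further descent step forces $y, z$ both even. Writing $x = 2u+1$, $y = 2v$, $z = 2w$ and analyzing $f/8 \equiv 0 \pmod 2$ yields the constraint $\binom{u+1}{2} + v^2 + w^2 + vw \equiv 0 \pmod 2$, which couples $u \pmod 4$ to the parities of $v, w$: for $u \equiv 0, 3 \pmod 4$ (equivalently $x \equiv \pm 1 \pmod 8$) one must have $v, w$ both even, while for $u \equiv 1, 2 \pmod 4$ (equivalently $x \equiv \pm 3 \pmod 8$) one must have $v, w$ not both even. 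Explicit enumeration shows that $\mU(\ZZ/8\ZZ)$ has $64$ elements and exactly $32$ of them satisfy this correlation; the remaining $32$, represented for instance by $(3,0,0)$, do not lift to $\ZZ_2$: substituting $x = 3 + 8a$, $y = 8b$, $z = 8c$ into $f$ and dividing by $8$ produces an equation whose constant term is $1$ modulo $2$, and hence is inconsistent.

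For the surviving half of $\mU(\ZZ/8\ZZ)$, apply a strong form of Hensel's lemma to the reduced equation at the appropriate stage of the descent: the gradient has $2$-adic valuation $0$ there, so the implicit function theorem produces a unique $\ZZ_2$-lift compatible with the given residues modulo $2^3$. Extending this to higher $k \geq 3$ is then formal: solutions of $\mU(\ZZ/2^k\ZZ)$ whose reduction modulo $8$ is in the admissible half correspond bijectively to the $\ZZ_2$-lifts of the same mod-$8$ points via the Hensel bijection, while solutions in the obstructed half propagate forward with matching fiber sizes under the reduction maps, maintaining the index $2$.

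The main obstacle is extending this case-by-case analysis to all six surfaces uniformly: each has slightly different admissible parity patterns and obstruction mechanisms, and case~(i), where $d = 5$ is odd, is structurally different from (ii)--(vi), affecting the $2$-adic structure of $\nabla f$. A clean uniform statement would require tracking $(a,b,c,d) \pmod 4$ throughout; absent a clever device, this appears to reduce to six essentially independent but analogous computations, each resolving to the same qualitative outcome.
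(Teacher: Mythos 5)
Your proposal takes a genuinely different route from the paper — a one-time descent to an explicit mod-$8$ (or mod-$16$) obstruction followed by Hensel — whereas the paper works uniformly in $k$, showing directly that $f(x+2^{k-1},y,z)-f(x,y,z)\equiv 2^k a x\pmod{2^{k+1}}$ for the odd coordinate $x$ and that fibers of $\mU(\ZZ/2^{k+1}\ZZ)\to\mU(\ZZ/2^k\ZZ)$ are all-or-nothing; the factor of $2$ then drops out in one stroke for every $k\ge 3$. Your computation for case (vi) at level $8$ is correct, and descent is a legitimate tactic, but as written there are two real gaps.

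First, the claim that ``the gradient has $2$-adic valuation $0$ there'' is false: for every surface in the table, $\nabla f = (2ax-dyz,\,2by-dxz,\,2cz-dxy)$ has $v_2\ge 1$ at \emph{every} $\ZZ_2$-point (this is exactly the reason Hensel in its naive form fails and the lemma is nontrivial; if some partial were a unit somewhere, the map would be surjective there). After the substitution $x=2u+1$, $y=2v$, $z=2w$, the quantity $f/8$ contains the term $u(u+1)/2$ and is not a polynomial in $(u,v,w)$, so its ``gradient'' isn't meaningful. One needs at least one more descent step (e.g.\ $x=4u'+1$, $y=4v'$, $z=4w'$) to reach a genuine polynomial with a unit partial derivative; you invoke ``the appropriate stage of the descent'' but do not carry it out, and it is not obvious that the same depth of descent suffices in all six cases.

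Second, the propagation to higher $k$ is not ``formal.'' Even granting that every admissible mod-$8$ class lifts to some $\ZZ_2$-point, it does not follow that a point in $\mU(\ZZ/2^k\ZZ)$ ($k>3$) lies in the image of $\mU(\ZZ_2)$ as soon as its mod-$8$ reduction is admissible — the lift you get from Hensel need not reduce to the given point mod $2^k$. Nor is it immediate that the admissible locus has size exactly $\tfrac12\#\mU(\ZZ/2^k\ZZ)$; this requires controlling the fiber sizes of all reduction maps $\mU(\ZZ/2^{k+1}\ZZ)\to\mU(\ZZ/2^k\ZZ)$, which is precisely what the paper's uniform perturbation argument accomplishes (it shows these fibers have size $0$ or $8$, and that exactly one of the two octets $\{(x+\delta_1 2^{k-1},\dots)\}$ and its companion obtained by $x\mapsto x+2^{k-1}$ lifts). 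I'd recommend adopting that uniform-in-$k$ step, which replaces both the Hensel argument and the propagation claim.
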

\begin{proof}
Let $k\ge 2$, and
let $(x,y,z)$ be a solution modulo $2^{k}$. Then all eight
points of the form $(x+\delta_1 2^{k-1}, y+\delta_2 2^{k-1}, z+\delta_3 2^{k-1})$ with $\delta_i\in\{0,1\}$ are solutions modulo $2^k$. Indeed, changing $x$ by $2^{k-1}$ results in
\begin{equation}\label{eq:modification-x-mod-2^k-1}
  \begin{aligned}
  f(x+2^{k-1}, y, z) &= a(x+2^{k-1})^2 + by^2 +cz^2 - d(x+2^{k-1})yz -1 \\
  & = f(x,y,z) + 2^k ax + 2^{2k-2}a + d2^{k-1}yz.
\end{aligned}
\end{equation}
Clearly, $2^k$ divides $2^k ax + 2^{2k-2}a$. In case (i), precisely two of $x$, $y$, and $z$ are even so that $2\mid yz$, while in the remaining cases, $d$ is even, so that $2^k\mid d2^{k-1}yz$ in any case. Hence, $f(x+2^{k-1}, y, z)\equiv f(x,y,z) \equiv 0 \bmod{2^k}$, and modifications of $y$ or $z$ can be treated analogously. 

Let the parameters $a,b,c,d$ be as in case (i) for now. If $(x,y,z)$ is a solution modulo $2^{k}$, then precisely one of $x,y,z$ is odd, say $x$ (the other two cases are analogous). We note that $4\mid yz$ by the assumption on the parities of the coordinates, while $2^{k+1}\mid 2^{2k-2}$ by the assumption on $k$, so that~\eqref{eq:modification-x-mod-2^k-1} implies that
\[
  f(x+2^{k-1},y,z) - f(x,y,z) \equiv 2^k ax \equiv 2^k \bmod{2^{k+1}},
\]
noting that both $a$ and $x$ are odd by assumption for the second equivalence. Using that $f(x,y,z)\equiv f(x +2^{k-1}, y, z) \equiv 0 \bmod {2^k}$, this implies that precisely one of $f(x,y,z)$ and $f(x +2^{k-1}, y, z)$ vanishes modulo $2^{k+1}$. (And then $f$ also vanishes modulo $2^{k+1}$ on the other seven points coinciding with this one modulo $2^k$, but on none of the points coinciding with the other one modulo $2^k$.)

The remaining cases can be dealt with similarly, using that precisely one of $x$ and $y$ is odd in case (ii), that $y$ is always odd in case (iii), that $z$ is always odd in cases (iv) and (v), and that $x$ is always odd in case (vi).
\end{proof}
\begin{remark}\label{rmk:double-volume}
As a consequence  of this and by \cite[Ch.~II, Lem.~6.6]{J}, the Tamagawa volume of each residue disc in $\mU(\ZZ_2)$ modulo $2^k$ is $2^{1-2k}$.
(Note that this makes Lemma~\ref{lem:hensel-failure} compatible with~\cite[Lem.~1.8.1]{borovoi-rudnick}.)
Thus, whenever we  count points in the image $\mU(\ZZ)\to \mU(\ZZ/m\ZZ)$ with $2\mid m$, we shall multiply the result by $2$ when using it as part of our modified leading constant.
\end{remark}

On the other hand, for odd primes in $S$, with the help of a computer, we find that each point $P$ modulo $p$ lifts to a point modulo $p^2$ and 
the $p$-adic norm of a least one of the partial derivatives is at least
$p^{-1}$ at $P$. Hence, Hensel's lemma implies that all points modulo odd primes lift to $p$-adic points and~\eqref{eq:sigmap} holds for all odd places.

\subsubsection{Failures of strong approximation}
As usual let $U\subset \AAA^3$ be one of the Baragar--Umeda surfaces \eqref{eq:bu-equation}
and let $\mathfrak{U}$ be its integral model over $U$.
If $a$ is a square modulo $p$, then there are obvious solutions $(\pm 1/\sqrt{a},0,0)\in \mU(\ZZ_p)$ modulo $p$. However, the group $\Gamma$ acts trivially on these, meaning that they only lift to the trivial solutions $(\pm 1,0,0)\in\mU(\ZZ)$ if $a\in \{\pm 1\}$ is an integral square, or not at all if it is not. (For instance, in case (i), there is a solution $(1,0,0)\in\mU(\FF_p)$ for all primes $p$ which lifts only to $(1,0,0)$, while $(0,3,0)\in \mU(\FF_{11})$ does not lift at all.) In the light of these observations, we are led to set
\[
  \mU(\ZZ/p^k\ZZ)' = \left\{P \in \mU(\ZZ/p^k\ZZ) : 
  \begin{array}{l}
  P\not\equiv \{(\alpha, 0, 0),\ (0,\beta,0), (0, 0, \gamma)\} \bmod p \\ 
  \text{for any $\alpha, \beta, \gamma\in\ZZ/p\ZZ$ }
  \end{array}
  \right\}
\]
for odd primes $p$ and $k\ge 1$, and
\[
  \mU(\ZZ/2^k\ZZ)' = \left\{P\in \mU(\ZZ/2^k\ZZ) :
  \begin{array}{l}
P\in \im(\mU(\ZZ_2)\to \mU(\ZZ/2^k\ZZ)),\\
P\not\equiv \{(\alpha, 0, 0), (0,\beta, 0), (0,0,\gamma)\} \bmod{8}\\
  \text{for any $\alpha,\ \beta,\gamma\in\ZZ/8\ZZ$}
  \end{array}
  \right\}
\]
for $k\ge 3$.

For any integer $m>0$, the description of $\mU(\ZZ) \cap U_0$ as the orbit of one or more primitive solutions under the group generated by the Vieta involutions allows us to efficiently compute the image of 
\begin{equation}\label{eq:def-phi}
\phi_m\colon \mU(\ZZ) \cap U_0 \to \mU(\ZZ/m\ZZ).
\end{equation}
Although we omit the details it is possible to extend  the work of Colliot-Th\'el\`ene--Wei--Xu~\cite{colliot} and  Loughran--Mitankin \cite{LM}, in order to study the Brauer--Manin obstruction for the Baragar--Umeda surfaces. In the case of the  surface (i), for example, one can check that  the Brauer--Manin obstruction 
precisely cuts out this image for $m= 2^3\cdot 3 \cdot 5$;  in other words, 
\[
  \im \phi_{2^3 \cdot 3\cdot 5} = \left(\mU(\ZZ/8\ZZ)' \times \mU(\ZZ/3\ZZ)' \times \mU(\ZZ/5\ZZ)'\right)^{\Br U}.
\]
(This  makes sense, since  the pairing is constant over all places different from $\infty, 2, 3$ and $5$, so that the set cut out does not depend on choices of points over the remaining primes.)
Motivated by this, 
for any of the surfaces in Table \ref{tb:surface-params}, 
we define 
\begin{equation}\label{eq:MS}
m_S=\prod_{p\in S} p^{k_p},
\quad \text{ with }
k_p = \begin{cases}
3 &\text{ if $p=2$,}\\
1 &\text{ if $p\in S\setminus \{2\}$}.
\end{cases}
\end{equation}
We can then prove the following facts about  $\im \phi_m$, for various choices of $m\in \NN$.

\begin{proposition}\label{prop:strong-approx-computations}
  Let $I_m = \im \phi_m$ for $m\in\NN$. Then
  \begin{enumerate}
      \item $I_p = \mU(\FF_p)'$ if $p\not\in S$ and  $p\leq 1000$, provided  $U$ is not as in case (ii) or (iv);
      \item $I_{pq}=I_p\times I_q$ if $p,q\le 73$ are distinct primes and $p\not\in S$; 
      \item $I_{pql} = I_{pq}\times I_l$, up to reordering of $p,q,l$, if $p,q,l\le 23$ are distinct primes;
      \item $I_{m} = I_{m/m_S} \times I_{m_S}$ for $m=2^3\cdot3\cdots 11$; and
      \item $\#I_{m\cdot m_S} = m^2 \# I_{m_S}$, where $m=\prod_{p\in S} p$. 
  \end{enumerate}
\end{proposition}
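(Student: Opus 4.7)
The proposition is a computational statement, so the plan is to describe an algorithm whose output verifies each of the five claims. The starting point is that, for each of the surfaces in Table~\ref{tb:surface-params}, Baragar and Umeda have identified a finite list of ``root'' integer solutions, and the entire set $\mU(\ZZ)\cap U_0$ is obtained as the orbit of these roots under the group $\Gamma \cong (\ZZ/2\ZZ)*(\ZZ/2\ZZ)*(\ZZ/2\ZZ)$ generated by the three Vieta involutions. Since each involution is defined over $\ZZ$, it descends to an involution on $\mU(\ZZ/m\ZZ)$, and the reduction map $\phi_m$ of \eqref{eq:def-phi} is $\Gamma$-equivariant.

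I would first enumerate the $\Gamma$-orbit of the roots up to a large height, reducing each point modulo the various $m$ under consideration and recording the resulting set $I_m \subset \mU(\ZZ/m\ZZ)$. For (1), the target set $\mU(\FF_p)'$ is computed by brute force (its cardinality is predicted by Lemma~\ref{lem:local1} minus the $O(1)$ axis points), and one checks equality $I_p = \mU(\FF_p)'$ for every prime $p\le 1000$ with $p\not\in S$, excluding cases (ii) and (iv). Parts (2) and (3) are verified by comparing $\#I_{pq}$ and $\#I_{pql}$ with the appropriate products, using that the factorisation $I_{pq}=I_p\times I_q$ is equivalent to $\#I_{pq}=\#I_p\cdot\#I_q$ given the natural inclusion $I_{pq}\subseteq I_p\times I_q$. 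Claim~(4) is the same comparison with one factor being the product over $p\in S$ of $I_{p^{k_p}}$ as in \eqref{eq:MS}, and claim~(5) reduces to counting $\# I_{m_S}$ and $\# I_{m\cdot m_S}$ directly and dividing.

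The main obstacle is justifying that the enumeration, which is necessarily finite, already captures the full image of $\phi_m$. The key point is that, because $\phi_m$ is $\Gamma$-equivariant and $I_m$ is stable under the induced $\Gamma$-action on $\mU(\ZZ/m\ZZ)$, the image $I_m$ is a union of $\Gamma$-orbits modulo $m$. One can therefore proceed as follows: enumerate the $\Gamma$-orbits modulo $m$ by applying the three involutions to every residue class until no new classes appear (this terminates since $\mU(\ZZ/m\ZZ)$ is finite), then mark an orbit as ``hit'' as soon as any integer point in its preimage is found in the enumeration. Once every $\Gamma$-orbit modulo $m$ is accounted for, the computation of $I_m$ is provably complete, and the five claims become finite verifications. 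The exceptional status of cases (ii) and (iv) in (1) is expected to reflect genuine failures of strong approximation at additional primes outside $S$, which the algorithm will detect as orbits modulo $p$ that are disjoint from the enumerated integer points for some $p\le 1000$, $p\not\in S$.
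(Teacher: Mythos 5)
Your proposal is correct and takes essentially the same approach as the paper, which simply records that "these equalities are established by determining the respective orbits using a computer." You have correctly identified the two ingredients that make this a provably terminating finite computation — that $\mU(\ZZ)\cap U_0$ is the $\Gamma$-orbit of finitely many roots, and that $\Gamma$-equivariance of $\phi_m$ gives $I_m = \Gamma\cdot\phi_m(\text{roots})$, computable as a stable $\Gamma$-closure in the finite set $\mU(\ZZ/m\ZZ)$ — though the cleanest phrasing is to compute the $\Gamma$-orbit of the root reductions directly rather than waiting for every orbit mod $m$ to be "accounted for" by an integer-point enumeration.
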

\begin{proof}
  These equalities are established  by determining the respective orbits using a computer. More precisely, for $U$ as in case (ii), the first equality fails for  $p\equiv \pm 1 \bmod{24}$, and for $U$ as in case (iv), the behaviour seems to depend on $p$ modulo $120$.
  The second computation reveals failures of strong approximation for precisely one pair $(p,q)$ with $p,q\in S$ in all cases except (iv), similar to the one in case (i) that is explained by the Brauer--Manin obstruction.\end{proof}

We expect that the failures of strong approximation encountered in the numerical analysis of part (2) of 
Proposition \ref{prop:strong-approx-computations} are  all  explained by the Brauer--Manin obstruction.

\subsubsection{The modified constant}

Based on our observations in the previous section, we 
propose modifying our constant along the lines of Heuristic \ref{main-heur}.
Let 
$P_1, P_2, P_3$ be the three vertices of the triangle at infinity.
Let $m_S$ be defined by \eqref{eq:MS} and recall the definition \eqref{eq:def-phi} of the map $\phi_m$, for any $m\in \NN$.
We apply Heuristic \ref{main-heur} with the set 
$$
V_0 = \{P_1,P_2,P_3\} \times \pi_{m_S}^{-1} \left(\im \phi_{m_S}\right)
\times \prod_{p\not\in S} \mU(\FF_p)',
$$ 
where  $\pi_k\colon \prod_{p\mid k} \mU(\ZZ_p) \to \mU(\ZZ/k\ZZ)$ is the reduction modulo $k$, for 
any  $k\in \NN$. 
Note that taking a different unbounded component $U_i$ to $U_0$ would give a different set $V_i$ of equal volume. We do not take the union, however, since the set $V_0$ only approaches each vertex of the triangle at infinity from one of the four possible directions, so 
in fact the resulting volume would be  $\frac{1}{4} \tau_{U,H}(V_0\cup V_1\cup V_2\cup V_3) = \tau_{U,H}(V_0)$.

We proceed to calculate  the value of $\tau_{U,H}(V_0)$.
Let 
\[
  c_S'=2 \frac{\#\im \left(\mU(\ZZ) \cap U_0 \to \mU(\ZZ/m_S\ZZ)\right)}{m_S^2} \prod_{p\in S} \lambda_p(1),
\]
noting that 
the leading $2$ is a consequence of Remark~\ref{rmk:double-volume}. 
For $p\not \in S$, we set
\begin{equation}\label{eq:sigma-modified-for-sa}
  \sigma_p' = \frac{\# \mU(\FF_p)' }{p^2}.
\end{equation}
Explicitly, 
on modifying~\eqref{eq:nu-explicit-chi} to remove the solutions $(\pm \sqrt{a},0,0)$, etc., if they exist, we
find that 
\[\sigma_p'=
  1+\frac{\chi_1(p)+\chi_2(p)+\chi_3(p)+\chi_4(p)}{p} 
    - \left(2 + \left(\frac{a}{p}\right) + \left(\frac{b}{p}\right) + \left(\frac{c}{p}\right)  \right)\frac{1}{p^2}.
\]
We set
\[
  c_{S^c}' = \prod_{p\not\in S} \lambda_p(1)\sigma_p'.
\]
Then we are led to modify the circle method constant in \eqref{eq:tea}
to 
$$
c_{\cir}' = \frac{6}{d}\cdot \lambda(1)\cdot c_{S}' \cdot c_{S^c}'.
$$
Numerical approximations of these new constants and a comparison to the constants in Table \ref{tb:surface-params} are recorded in Table~\ref{tb:modified-expected-const}. (It is interesting to note that our modified circle method constant is always smaller than the actual constant.)

Recalling that not all points counted in~\eqref{eq:sigma-modified-for-sa} lift to $\ZZ$-points in cases (ii) and (iv), we further set
\[
  c_{S^c}'' = \prod_{p\not\in S} \lambda_p(1) \frac{\#\im(\mU(\ZZ)\cap U_0\to \mU(\FF_p))}{p^2}
\]
and arrive at the modified constant
$$
  c''_{\cir} = \frac{6}{d}\cdot \lambda(1)\cdot c_{S}' \cdot c_{S^c}'',
$$
by computing the images for primes $p<10^3$. It follows from Proposition~\ref{prop:strong-approx-computations} that this modification does not make a difference in cases (i),(iii),(v) and (vi),  
except for a reduction in the bound for $p$ that we can use to numerically calculate it.

\begin{table}[t]
  \tabcolsep1em
  \def\fracrowsep{.5em}
  \begin{tabular}{c@{\hskip 3\tabcolsep}llll}\toprule
             & $c_{\cir}'$ & $c''_{\cir}$ & $c_{\cir}' / c_{\mathrm{BU}}$ & $c_{\cir}'' / c_{\mathrm{BU}}$ \\\midrule
       (i)   & 0.8127795 &      & 0.1554814 & \\
       (ii)  & 0.6682904 & 0.63 & 0.2253867 & 0.21 \\
       (iii) & 0.5012050 &      & 0.2030892 & \\
       (iv)  & 1.038439  & 0.51 & 0.2559995 & 0.13 \\
       (v)   & 0.4472312 &      & 0.1793816 & \\
       (vi)  & 0.7655632 &      & 0.1555764 & \\\bottomrule
  \end{tabular}
  \caption{The modified expected constant and a comparison to the actual leading constant. }\label{tb:modified-expected-const}
\end{table}

It is interesting to speculate on the constant $\gamma_U\in \QQ_{>0}$ in 
Heuristic \ref{main-heur}, led by the situation 
\eqref{eq:sky} for rational points on Fano varieties. 
An integral variant of the $\alpha$-constant has been described by Wilsch \cite[Def.~2.2.8]{wilsch} for split log Fano varieties, but this rational number is the same 
for the six surfaces  considered here,
since the relevant cones are all isometric.
Turning to the $\beta$-constant, it is possible to expand on the arguments in 
\cite{colliot} to deduce that the algebraic part of the Brauer group up to constants has order $8$ in cases (i) and (vi), and order $4$ in the remaining cases.
We note that the quotients $c'_{\cir}/c_{\mathrm{BU}}$ are not integers, nor are they rational numbers of small height. Thus Table~\ref{tb:modified-expected-const} does not seem to be compatible with a version of Heuristic \ref{main-heur} with $\gamma_U$ of small height.

\subsection{Equidistribution}\label{s:equi}
As a consequence of the failures of strong approximation, the equidistribution property
also fails. However, we can still ask about equidistribution to the uniform probability measure on the image of the map $\phi_m$ in \eqref{eq:def-phi}.
In other words, we can ask whether a variant of the \emph{relative Hardy--Littlewood property} holds, as 
defined by Borovoi and Rudnick \cite[Def.~2.3]{borovoi-rudnick},  with respect to the density function
$\delta_{\overline{\mU(\ZZ)}}$, 
which is 
the indicator function of the closure of  $\mU(\ZZ)$
in the adelic points $\mU(\A_{\ZZ, \text{fin}}) = \prod_p\mU(\ZZ_p)$. 
In fact, the relative Hardy--Littlewood property fails: there are infinitely many places at which strong approximation fails, and so 
$\delta_{\overline{\mU(\ZZ)}}$
is not locally constant. However, it is still measurable, and it is natural to investigate this weaker property.

\begin{figure}
  \begin{center}
    \includegraphics[width = \textwidth]{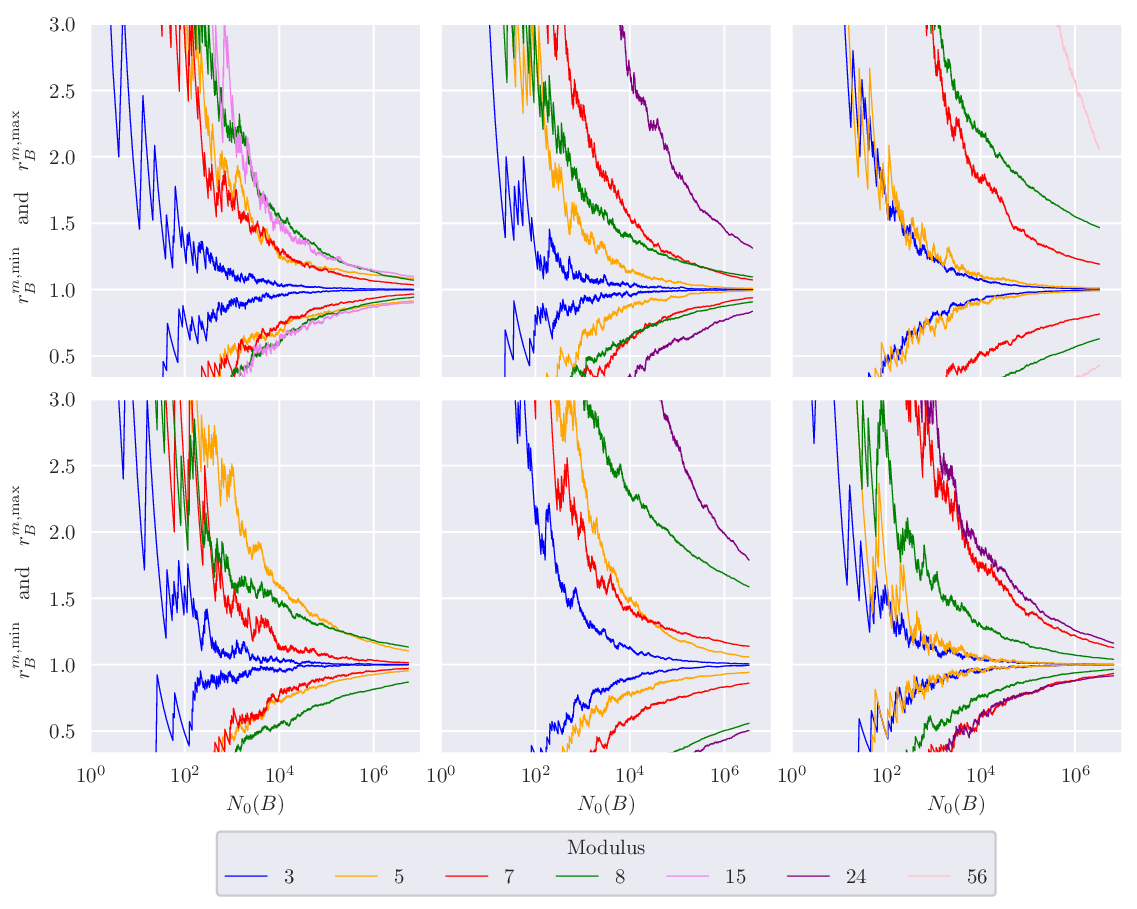}
    \caption{A comparison of the maximal and minimal observed frequencies of reductions modulo $m$ with the expected frequency.}\label{fig:equidistribution-grid}
  \end{center}
\end{figure}

We numerically tested equidistribution of integral points modulo $m$, where $m\in\{8,3,5,7\}$. This set always includes all primes in $S$ and at least one place not in $S$. In cases (ii), (v) and (vi), there is a failure of strong approximation simultaneously involving the primes $2$ and $3$; in case (i), there is a failure involving $3$ and $5$; in case (iii), there is one involving $2$ and $7$. To test for joint equidistribution modulo these primes, we have thus added $m=24$, $15$ and $56$, respectively. In each  case, we computed the set of integral points of height at most $B$ for $B\le 10^{1000}$, which can  be done efficiently using the Vieta involutions,  resulting in between $3\cdot 10^6$ and $7\cdot 10^6$ points. For $m$ as before and $P_m\in \im\phi_m$, we computed the frequencies
\[
  p^{(P_m)}_{B} = \frac{\#\{P\in \mU(\ZZ)\cap U_0 : H(P) \le B,\ P\equiv P_m \bmod{m}\}}{\#\{P\in \mU(\ZZ)\cap U_0 : H(P) \le B\}}.
\]
Equidistribution modulo $m$ means that $p^{(P_m)}_{B}\to 1/k$ as $B\to\infty$, where $k=\#\im\phi_m$ is the 
number of points modulo $m$ that lift to $\ZZ$. We thus determined 
\[
  r_B^{m,\max} = k \max_{P_m \in \im\phi_m } p^{(P_m)}_{B} \qquad \text{and} \qquad r_B^{m,\min} = k \min_{P_m \in \im\phi_m } p^{(P_m)}_{B},
\]
expecting that both quantities converge to $1$. The results are recorded in Figure~\ref{fig:equidistribution-grid}. As $\#\im\phi_m$ grows like $m^2$, we expect our order statistics to converge more slowly for larger values of $m$. With that in mind, our results seem compatible with equidistribution, even though we note that the distributions modulo $8$ in cases (iii) and (v) are outliers.

\section{The Markoff surface}\label{s:markoff}
The Markoff surface  is defined by the cubic equation~\eqref{eq:markoff} and 
has an $\mathbf{A}_1$-singularity at $(0,0,0)$. Over the reals, this singularity is an isolated point, while the remaining four connected components are smooth. Again, let $U_0$ be the unbounded component on which $x,y,z>0$.

Let $\tX\to X$ be a minimal desingularisation
and  $E$ its exceptional divisor.
 Let $\rho\colon \mtX \to \mX$ be a model, 
 let $\mathfrak E $ be the closure of $E$,
  let $\tU = \rho^{-1} U$, and let $\mtU = \rho^{-1}\mU$.
We note that the singular point $(0,0,0)$ is invariant under the Vieta involutions, both as an integral point and as an $\FF_p$-point. It follows that any integral point on $\mU$ or $\mtU$ that reduces to $(0,0,0)\in \mU(\FF_p)$ must be $(0,0,0)$ or lie above $(0,0,0)$, respectively.

\subsection{Non-archimedean local densities}
The 
local densities, adjusted as in Section~\ref{ssec:modified-expectations},  coincide for the Markoff surface and its minimal desingularisation. More precisely,
we note that for all primes, including $2$ and $3$, the point $(0,0,0)$ is the only singular point in $\mU(\FF_p)$.
In the light of this, we set
\[
  \mU(\FF_p)' = 
   \tilde \mU(\FF_p) \setminus \mathfrak{E}(\FF_p)
  \cong \{P\in \mtU(\FF_p) : \rho(P)\ne (0,0,0) \};
\]
this set contains the image of the reduction map $\mU(\ZZ)
\cap U_0 \to \mU(\FF_p)$ and only consists of smooth points.
For $p=2$, we computed the image of $\phi_{2^k}\colon \mU(\ZZ)\cap U_0 \to \mU(\ZZ/2^k\ZZ)$ by the same method as in Section~\ref{ssec:modified-expectations}.
For $2\le k\le 10$, it consists of one fourth of the points in
\begin{equation}\label{eq:points-mod-2k}
  \{P\in \mU(\ZZ/2^k\ZZ) : P \not\equiv (0,0,0) \bmod 2\}.
\end{equation}
In contrast to the observation in Lemma~\ref{lem:hensel-failure}, this is not a consequence of a failure of Hensel's lemma, as all points in the set~\eqref{eq:points-mod-2k} are smooth. Hence, we set
\[
  \sigma_2 = \frac{\#\im\phi_{4}}{4^{2}},
\]
without any of the normalisations  described in Remark~\ref{rmk:double-volume}, and compute this to be 
$
  \sigma_2 =1/4.
$
Computing $\im\phi_m$ for $m$ as in Proposition~\ref{prop:strong-approx-computations}
does not reveal
any further failures of strong approximation. In fact, it follows from
 recent work of Chen \cite[Thm.~5.58]{chen} that the same is true when $m$ is a product of primes, with each prime larger than some absolute constant. 
We are  therefore led to set
\[
  \sigma_p = \frac{\# \mU(\FF_p)'}{p^2},
\]
for odd primes. It follows from \cite[Lem.~6.4]{sarnak} (with $\alpha=3$ and $\beta=0$) that 
$$
\sigma_p=\begin{cases}
\frac{8}{9} &\text{ if $p=3$,}\\
1+\frac{3\chi(p) }{p}
 &\text{ if $p>3$,}
\end{cases}
$$
where $\chi(p) = (\frac{-1}{p})$. 
We have $\rho_U=0$. Moreover, setting $\lambda_p = L_p(1,\chi)^{-3}$ clearly makes $\prod \lambda_p\sigma_p$
absolutely convergent. Letting $\lambda_0 =L(1,\chi)^{3}$, 
the analytic class number formula yields $\lambda_0=
 \pi^3/2^6$.

\begin{remark}
This passage between points on $\mU$ and a desingularisation only works because of the exclusion of the singular point $(0,0,0)$ modulo all primes. Its preimage on $\tX$ is a $(-2)$-curve $E$ and geometrically isomorphic to $\PP^1$. The ranks of $\Pic \tX$ and $\Pic \tU$ increase by one, so that $\rk\Pic \tU=1$.  As $E$ splits over almost all primes $p$, the naïve local densities on $\tU$ would become $1 + \frac{1+3\chi(p)}{p}$ over these primes. In particular, $F_{\tU}(s)$ would have a pole of order $1$ at $s=0$. A  similar heuristic for this desingularisation would thus predict a growth rate of $(\log B)^3$, which is larger than the $(\log B)^2$ obtained by Zagier \cite{zagier}.
Only by modifying  the local densities to account for failures of strong approximation, can we  remove this pole and return the expected order of growth to $(\log B)^2$.
\end{remark}

\subsection{Archimedean local densities}\label{s:sub-markoff}
As $\rho$ is crepant and an isomorphism above the boundary, we have $\cO_{\tX}(\rho^{-1}(D_1+D_2+D_3)) \cong \rho^*\omega_X \cong \omega_{\tX}$. Moreover, it is an isomorphism above the unbounded real components. Hence, arguing similarly to Lemma~\ref{lem:arch-bu} (with $d=3$), we have 
$c_\infty=2$ in 
\eqref{eq:arch-constant-abstract}.

\subsection{Conclusion}
In summary, 
Proposition~\ref{prop:local-densities-convergence-factors} and 
Heuristic \ref{heur:circle-method}  leave us with the prediction
$
  N_U(B) \sim c_\cir (\log B)^2,
$
as $B\to \infty$, where
\[
  c_{\cir} = 2\lambda_0 \prod_p \lambda_p\sigma_p
  = \frac{4 \pi^3}{3^5} \prod_{p>3} \left(1-\frac{\chi(p)}{p}\right)^3\left(1+\frac{3\chi(p)}{p}\right)
    .
\]
We computed the Euler product for $p<10^8$ and compared this constant
with the constant 
$0.180717104712$ obtained by 
Zagier \cite{zagier}. (Note that, as pointed out in 
\cite[p.\,481]{baragar}, 
there is a typo in his paper.) Moreover, 
Zagier 
counts all ordered, positive Markoff triples and so his  constant 
has to be multiplied by $24$ to account for symmetries and signs before comparing it to our expectations.  This is summarised in Table \ref{tb:expected-const-zagier}.
We observe that the results are off by factors in a similar range to those present  in 
Table \ref{tb:modified-expected-const}.

\begin{table}[t]
  \tabcolsep1em
  \def\fracrowsep{.5em}
  \begin{tabular}{ll}\toprule
    $c_{\cir}$ & $c_{\exp}/c_{\mathrm{Zagier}}$\\\midrule
    1.256791 & 0.2897693 \\ 
    \bottomrule
  \end{tabular}
  \caption{The circle method prediction for the Markoff surface and a comparison to the actual leading constant as determined by Zagier.}\label{tb:expected-const-zagier}
\end{table}

\section{Further examples}\label{s:higher-picard}

\subsection{A question posed by Harpaz}\label{s:final}
In~\cite[Qn.~4.4]{harpaz}, Harpaz asks about the number of integral points of bounded height on 
the surfaces $U_k\subset \AAA^3$ defined by the
cubic polynomial $f(x,y,z)=(x^2-ky^2)z-y+1$, 
for a square-free integer  $k>1$. 
It will be useful to recall the construction of Harpaz' compactification, which is based on the map 
  $U_k \to \PP^2$ given by 
  $(x, y, z)\mapsto (x:y:1)$.
This map factors through the blow $X$ of $\PP^2$ in the two points $(\pm\sqrt{k}:1:1)$. 
Let $D_1 = V(z)$, $D_2=V(x-\sqrt{k}y)$, and $D_3=V(x+\sqrt{k}y)$. Then $D= D_1+D_2+D_3$ is defined over $\QQ$, and $U_k$ is isomorphic to $X\setminus D$.

Harpaz proves 
in \cite[Prop.~4.3]{harpaz}  
that $U_k(\ZZ)$ is Zariski dense 
whenever the real quadratic field $K = \QQ(\sqrt{k})$ has class number one and is such that the reduction map  $\fo_K^\times \to (\fo_K/\mathfrak p)^\times$ is surjective for infinitely many prime ideals $\mathfrak p$ of degree $1$ over $\QQ$. 
Moreover, 
the surface $U_k$ is smooth and admits a log K3 structure by~\cite[Ex.~2.13]{harpaz}, 
and furthermore,  its compactification is a del Pezzo surface of degree $7$ having  geometric Picard rank $3$. Since the boundary is a triangle of three lines whose divisor classes are linearly independent, so it follows that  the geometric Picard group of $U_k$ is trivial.  In particular, we have $\rho_{U_k}=0$
and $\Br_1(U_k)/\Br(\QQ)=0$.
Moreover, 
note that the components of $D$ intersect pairwise in a real point, so that $b=2$. 
It now follows from 
Conjecture \ref{con1} that 
$
N^\circ_{U_k}(B) =O\left((\log B)^2\right),
$  
where the implied constant depends on $k$. 

We claim that the only  $\AAA^1$-curve over $\ZZ$ is the line $z=y-1=0$. Suppose for a contradiction that $z\neq 0$ and that $U_k$ contains the $\AAA^1$-curve
$$
x=a_0t^k+\cdots +a_k, \quad y=b_0t^k+\cdots+b_k, \quad 
z=c_0t^l+\cdots+c_l,
$$ 
with integer coefficients such that  $\max\{|a_0|,|b_0|\}\neq 0$ and $c_0\neq 0$. 
Comparing coefficients of $t^{2k+l}$ yields $(a_0^2-kb_0^2)c_0=0$, which 
implies that $a_0=b_0=0$, since $k$ is square-free. This is a contradiction and so
$U_k^\circ$ is obtained by removing the line  $z=y-1=0$. 
 Heuristic \ref{heur:circle-method} then  gives 
   \begin{equation}\label{eq:carrot}
   N_{U_k}^\circ(B)\sim 
   c_\infty \prod_p \sigma_p \cdot 
   (\log B)^{2},
   \end{equation}
 where $c_\infty$ is the leading constant in 
Proposition \ref{prop:arch-volume-expansion} and $\sigma_p = \lim_{k\to\infty}p^{-2k}\nu(p^k)$, in the notation of \eqref{eq:nu}.

  \subsubsection{Real density}

In this section we give a direct estimate for the real density
$\mu_{\infty}(B)$, as defined in 
\eqref{eq:infinity}, as $B\to \infty$.
However, it turns out that  there is an analytic obstruction to the Zariski density of integral points near certain faces of the Clemens complex of a desingularisation of 
 the compactification of $U_k$. The outcome of this is that we should redefine 
 $\mu_\infty(B)$ to involve only  $(x,y,z)\in U_k(\RR)$ for which
\begin{equation}\label{eq:min-max}
\min\{|x- \sqrt{k}y|, |x+ \sqrt{k}y|\}< 1<
\max\{|x- \sqrt{k}y|, |x+ \sqrt{k}y|\},
\end{equation}
and we redefine $c_\infty$ to be the leading constant in the asymptotic formula for this modified real density. 
To check this it is convenient to 
make  the change of variables $u=x+\sqrt{k}y$ and $v=x-\sqrt{k}y$. 
If $\max\{|u|, |v|\}<1$, then $|y| = |u-v|/2\sqrt{k}\leq 1/\sqrt{k}$, leaving only the trivial solutions with $y=0$. If 
$\min\{|u|, |v|\}>1$, on the other hand, then $|z| = |(y-1)/uv| \ll  \max(|u|,|v|)/|uv| <1$, leaving only the non-dense set of solutions with small $|z|$.

\begin{lemma}\label{lem:fox4}
We may take $c_\infty= \frac{4}{\sqrt{k}}$.
\end{lemma}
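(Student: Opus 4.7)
The plan is to perform a change of variables that diagonalises the quadratic form $x^2-ky^2$, making the Leray form explicit, and then carry out a direct asymptotic analysis of the real density integral. Specifically, I will set $u = x + \sqrt{k}y$ and $v = x - \sqrt{k}y$, so that $x = (u+v)/2$, $y = (u-v)/(2\sqrt{k})$ and $x^2 - ky^2 = uv$. The Jacobian is $dx\,dy = \frac{1}{2\sqrt{k}}\,du\,dv$. On the open subset of $U_k(\RR)$ where $uv\ne 0$, one can use $(x,y)$ as local parameters and solve $z = (y-1)/(x^2-ky^2)$, so that the Leray form satisfies
\[
  |\omega| = \frac{dx\,dy}{|x^2-ky^2|} = \frac{du\,dv}{2\sqrt{k}\,|uv|}.
\]

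Next I will translate the height conditions. The constraints $|x|\le B$ and $|y|\le B$ become $|u+v|\le 2B$ and $|u-v|\le 2\sqrt{k}B$; since $k>1$ the first is the binding one once $|u|\gg 1$. Using $z = \bigl(\tfrac{u-v}{2\sqrt{k}}-1\bigr)/(uv)$, the constraint $|z|\le B$ translates (asymptotically) to $|v|\ge 1/(2\sqrt{k}\,B)$ in the region where $|u|\gg |v|$. Finally, condition~\eqref{eq:min-max} restricts to the two symmetric strips
\[
  R_1 = \{|v| < 1 < |u|\} \quad \text{and} \quad R_2 = \{|u| < 1 < |v|\}.
\]

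I will then split the integral. On $R_1$, the main term is
\[
  \int_{1<|u|<2B}\int_{\frac{1}{2\sqrt{k}B}<|v|<1} \frac{du\,dv}{2\sqrt{k}\,|uv|}
  = \frac{1}{2\sqrt{k}}\bigl(2\log(2B)\bigr)\bigl(2\log(2\sqrt{k}B)\bigr)
  = \frac{2}{\sqrt{k}}(\log B)^2 + O(\log B),
\]
and by the symmetry $u\leftrightarrow v$ the region $R_2$ contributes the same amount. Summing gives $\mu_\infty(B) = \frac{4}{\sqrt{k}}(\log B)^2 + O(\log B)$, which yields $c_\infty = 4/\sqrt{k}$ as claimed.

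The main subtlety, and the step I expect to demand most care, is justifying that every truncation we make when passing from the exact region to the simplified rectangular region is of lower order. This involves three checks: (i) the error in replacing the exact $|z|\le B$ locus by the asymptotic bound $|v|\ge 1/(2\sqrt{k}B)$ is supported on a strip of width $O(1/B)$ contributing $O(\log B)$; (ii) the ``transition'' zones where $|u|$ or $|v|$ is of order $1$ contribute $O(\log B)$ since one of the two logarithms collapses; and (iii) the complementary regions excluded by~\eqref{eq:min-max}, namely $\max(|u|,|v|)<1$ (bounded volume) and $\min(|u|,|v|)>1$ (where $|z|\ll 1/\max(|u|,|v|)$ forces $z$ small and the integral is $O(\log B)$), do not affect the leading coefficient. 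Once these error estimates are in place, the computation above delivers Lemma~\ref{lem:fox4}.
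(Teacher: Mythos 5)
Your proposal is correct and follows essentially the same approach as the paper: diagonalise the quadratic form via $u=x+\sqrt{k}y$, $v=x-\sqrt{k}y$, pull out the Jacobian $\frac{1}{2\sqrt{k}}$, reduce to an integral of $\frac{du\,dv}{|uv|}$ over a region constrained by the height bounds and by~\eqref{eq:min-max}, and extract the main term $(\log B)^2$ from rectangular subregions. The only cosmetic difference is in how the truncation error is packaged: the paper introduces a large auxiliary constant $A$ and integrates over shrunk rectangles $(A/B,1/A)\times(A,B/A)$ on which the indicator of the exact region $\mathcal{S}$ is genuinely constant, whereas you integrate over the full rectangles and argue separately that the boundary corrections are $O(\log B)$; both bookkeeping choices give the same leading term.
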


\begin{proof}
Using the Leray form to calculate the real density, it readily follows that 
$$
\mu_{\infty}(B)=
\int_{\mathcal{R}} \frac{\d x \d y}{|x^2-ky^2|},
$$
where $\mathcal{R}\subset\RR^2$ is cut out by the inequalities
$|x|,|y|\leq B$ and $|y-1|\leq B |x^2-ky^2|$, together with \eqref{eq:min-max}.
Making the change of variables $u=x+\sqrt{k}y$ and $v=x-\sqrt{k}y$, we obtain 
$$
\mu_{\infty}(B)=\frac{1}{2\sqrt{k}}
\int_{\mathcal{S}} \frac{\d x \d y}{|uv|},
$$
where now  $\mathcal{S}\subset\RR^2$ is cut out by the inequalities
$$
|u+v|\leq 2B, \quad |u-v|\leq 2\sqrt{k}B, \quad |u-v-2\sqrt{k}|\leq 2\sqrt{k}B |uv|,
$$
together with
$
\min\{|u|, |v|\}< 1< 
\max\{|u|, |v|\}$.

Summing over the possible signs of $u$ and $v$, we deduce that 
$$
\mu_{\infty}(B)=\frac{1}{2\sqrt{k}}\sum_{\ve_1,\ve_2\in \{\pm 1\}}
\int_0^{2(1+\sqrt{k})B}\int_0^{2(1+\sqrt{k})B} \frac{\1_\mathcal{S}(\ve_1 u ,\ve_2v)}{uv} \d u \d v.
$$
We isolate two subregions $\mathcal{S}_1\sqcup \mathcal{S}_2\subset 
\mathcal{S}$. Let  $A>0$ be a large parameter which doesn't depend on $B$ and define
$
\mathcal{S}_1=\left(\frac{A}{ B}, \frac{1}{A}\right)\times \left(A,\frac{B}{A}\right)$ and $
\mathcal{S}_2=
\left(A,\frac{B}{A}\right) \times 
\left(\frac{A}{ B}, \frac{1}{A}\right).
$
Taking $\1_\mathcal{S}(\ve_1 u ,\ve_2v)\leq 1$, 
the overall contribution to $\mu_{\infty}(B)$ from 
$$
(u,v)\in 
[0, 2(1+\sqrt{k})B]^2
\setminus 
\mathcal{S}_1\sqcup \mathcal{S}_2
$$
is  readily found to be $O(\log B)$, where the implied constant is allowed to depend on $A$ and $k$.
Taking $A$ sufficiently large, we clearly have 
$\1_\mathcal{S}(\ve_1 u ,\ve_2v)= 1$ whenever 
$(u,v)\in 
\mathcal{S}_1\sqcup \mathcal{S}_2$.
Hence
$$
\mu_{\infty}(B)=\frac{2}{\sqrt{k}}\sum_{i\in \{1,2\}}
\iint_{\mathcal{S}_i}
\frac{\d u \d v}{uv} =
\frac{4}{\sqrt{k}}(\log B)^2+O(\log B),
$$
with an implied constant that 
 depends on $A$ and $k$. 
\end{proof}

\subsubsection{Non-archimedean densities }

\begin{lemma}\label{lem:local10}
Let $p$ be a prime. Then 
$$
\sigma_p=
\begin{cases}
1-\frac{1}{p^2} & \text{ if $p>2$ and  $(\frac{k}{p})=-1$,}\\
1+\frac{1}{p^2} & \text{ if $p>2$ and $(\frac{k}{p})=+1$,}\\
1 & \text{ if $p\mid 2k$.}
\end{cases}
$$
\end{lemma}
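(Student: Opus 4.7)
The plan is to show that $\mathfrak U_k$ is smooth over $\ZZ$, apply Hensel's lemma to reduce to counting $\FF_p$-points, and then perform a direct count by stratifying according to the value of $A=x^2-ky^2$ modulo $p$.

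First I would verify smoothness. The partials are
$\partial f/\partial x = 2xz$, $\partial f/\partial y = -2kyz-1$, and $\partial f/\partial z = x^2 - ky^2$. For any prime $p\mid 2k$, the partial $\partial f/\partial y\equiv -1\bmod p$ is a unit, so the Jacobian criterion holds on the full fibre. For $p\nmid 2k$ (necessarily odd), suppose all three partials vanish at some $(x,y,z)\in\FF_p^3$. Then $2xz\equiv0$ forces $x\equiv 0$ or $z\equiv 0$; but $-2kyz-1\equiv0$ rules out $z\equiv0$, so $x\equiv0$. Then $x^2-ky^2\equiv 0$ combined with $p\nmid k$ forces $y\equiv0$, contradicting $-2kyz-1\equiv -1\not\equiv 0$. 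Thus $\mathfrak U_k$ is smooth over $\ZZ_p$ for every $p$, and an application of Hensel's lemma gives $\nu(p^\ell) = p^{2(\ell-1)}\nu(p)$ for all $\ell\ge 1$, whence
$$
\sigma_p = \frac{\nu(p)}{p^2}.
$$

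Next I would compute $\nu(p)$. Fix $(x,y)\in\FF_p^2$ and consider the equation $(x^2-ky^2)z\equiv y-1\bmod p$ in $z$. If $A:=x^2-ky^2\not\equiv 0$, there is a unique $z$; if $A\equiv 0$, the equation is solvable only when $y\equiv 1$, in which case all $p$ values of $z$ work. Therefore
$$
\nu(p) = \bigl(p^2 - M_p\bigr) + p\cdot M_p',
$$
where $M_p = \#\{(x,y)\in\FF_p^2 : x^2\equiv ky^2\}$ and $M_p' = \#\{x\in\FF_p : x^2\equiv k\}$.

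Finally, the counts $M_p$ and $M_p'$ are standard. For odd $p\nmid k$, writing $x=ty$ shows $M_p = 1 + (p-1)(1+(k/p)) = p + (p-1)(k/p)$, while $M_p' = 1+(k/p)$. Substituting yields $\nu(p) = p^2 + (k/p)$, giving the claimed values $1\pm p^{-2}$. For odd $p\mid k$, the equation $x^2\equiv ky^2$ forces $x\equiv 0$ (with $y$ free), so $M_p=p$ and $M_p'=1$, giving $\nu(p)=p^2$. For $p=2$, one can simply enumerate the eight triples in $\FF_2^3$: regardless of the parity of $k$, we have $f\equiv (x+y\cdot\mathbf{1}_{2\nmid k})z + y + 1\bmod 2$ (using $x^2\equiv x$, $y^2\equiv y$), and $y$ is uniquely determined by $(x,z)$, yielding $\nu(2)=4$ and $\sigma_2=1$. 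This handles all cases.

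There is no real obstacle: the smoothness verification is routine once one notices that $\partial f/\partial y$ is always a unit when $p\mid 2k$, and the $\FF_p$-count is an elementary application of the classical formula for $\#\{(x,y):x^2=ky^2\}$.
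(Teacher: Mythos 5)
Your proof is correct, and it takes a genuinely different route from the paper's. The paper computes $\nu(p)$ via exponential sums: it writes $\nu(p)=p^2+\tfrac{1}{p}\sum_{h\in\FF_p^*}e_p(h)\sum_{x,y}e_p(-hy)\sum_z e_p(h(x^2-ky^2)z)$, collapses the inner $z$-sum by orthogonality to reduce to the sum $U_p(h)=\sum_{x^2=ky^2}e_p(-hy)$, and evaluates $U_p(h)$ case by case before summing over $h$. You instead stratify $\FF_p^2$ by whether $A=x^2-ky^2$ vanishes: if $A\neq 0$ there is one $z$, and if $A=0$ the equation is solvable (with all $p$ values of $z$) precisely when $y=1$, so $\nu(p)=(p^2-M_p)+pM_p'$ with $M_p=\#\{x^2=ky^2\}$ and $M_p'=\#\{x^2=k\}$. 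Both methods are elementary and of comparable length; yours is arguably the more direct ``count by fibres'' argument and has the advantage of making visible exactly which $(x,y)$ contribute, whereas the paper's character-sum computation generalises more mechanically to the other families it treats (e.g.\ Lemma \ref{lem:local1}). You also explicitly verify smoothness of $\mathfrak U_k$ over $\ZZ_p$ to justify Hensel's lemma, which the paper leaves implicit; that is a genuine added value. (Incidentally, the paper's line ``$\sigma_p=\nu(p)/p$'' is a typo for $\nu(p)/p^2$, consistent with what you wrote.)

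One small imprecision in the $p=2$ case: your claim that ``$y$ is uniquely determined by $(x,z)$'' is only literally true when $k$ is even, where the congruence reads $y\equiv xz+1$. When $k$ is odd the reduced equation is $y(z+1)\equiv xz+1\bmod 2$, and for $(x,z)=(0,1)$ there is no $y$ while for $(x,z)=(1,1)$ both values of $y$ work; the total is nonetheless $\nu(2)=4$. Since you prefaced this by saying one can enumerate all eight triples, the conclusion stands, but the uniqueness claim should be dropped or restricted to $k$ even.
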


\begin{proof}
Let $\nu(p)$ be the number  of zeros of $f$ over 
$\FF_p$. It follows from Hensel's lemma that $\sigma_p=\nu(p)/p$.
Applying \eqref{eq:stepping}, we deduce that 
\begin{align*}
\nu(p)
&=p^2+\frac{1}{p}\sum_{h\in \FF_p^*}e_p(h)
\sum_{x,y\in \FF_p} e_p(-hy)
\sum_{z\in \FF_p} e_p\left(h(x^2-ky^2)z\right)\\
&=p^2+\sum_{h\in \FF_p^*}e_p(h) U_p(h),
\end{align*}
by orthogonality of characters,
where 
$$
 U_p(h)
=
\sum_{
\substack{
x,y\in \FF_p\\ x^2=ky^2}} e_p(-hy).
$$
 Suppose first that $p\nmid 2k$.
 If $(\frac{k}{p})=-1$ then $U_p(h)=1$, since only $(x,y)=(0,0)$ can occur. On the other hand, if 
$(\frac{k}{p})=+1$, then 
$$
U_p(h)=1+
\sum_{\substack{\eta\in \FF_p\\ \eta^2=k}}
\sum_{
\substack{
x,y\in \FF_p^*\\ x=\eta y}} e_p(-hy)=1+2\sum_{
y\in \FF_p^*} e_p(-hy)=-1,
$$
since $h\in \FF_p^*$. Suppose next that $p=2$. Then 
$$
 U_2(h)
=
\sum_{
\substack{
x,y\in \FF_2\\ x=ky}} e_2(y)=0.
$$
Finally, we suppose that $p>2$ and $p\mid k$. In this case 
$$
U_p(h)
=
\sum_{
\substack{
y\in \FF_p}} e_p(-hy)=0.
$$
The lemma follows on putting these together and evaluating the sum over 
$h$. 
\end{proof}

\subsubsection{Numerical data}

Combining Lemmas \ref{lem:fox4} and \ref{lem:local10} in \eqref{eq:carrot}, our heuristic leads us to expect that 
$
N_{U_k}^\circ(B)\sim 
c_\cir^{(k)} (\log B)$, with 
$$
c_\cir^{(k)}=
\frac{4}{\sqrt{k}}  \prod_{p\nmid 2k} \left(1+\frac{(\frac{k}{p})}{p^2}\right).
$$ 
\begin{figure}
  \begin{center}
    \includegraphics[width = .8\textwidth]{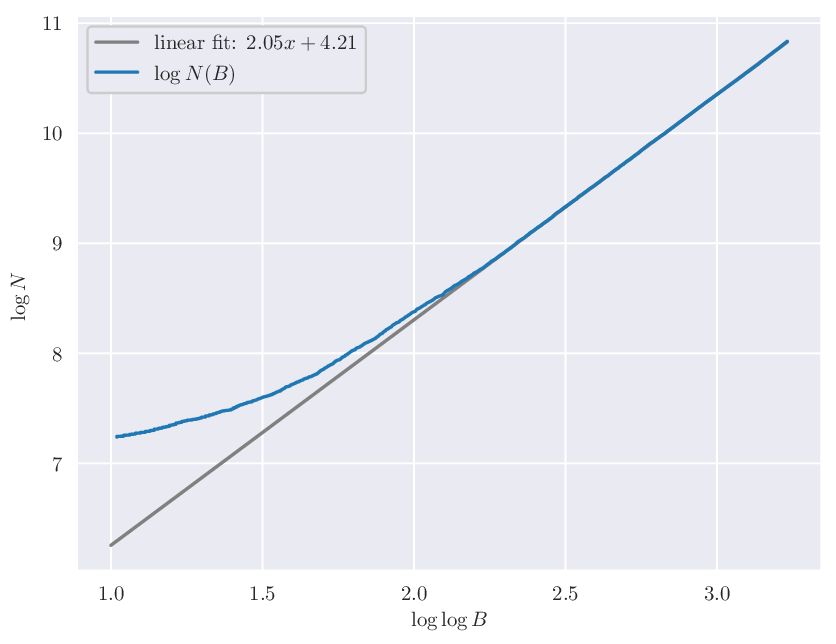}
    \caption{The number of points on $U_k$ for square-free $k\in [2,1000]$ and a linear fit.}\label{fig:ii}
  \end{center}
\end{figure}
\begin{figure}
  \begin{center}
    \includegraphics[width = .8\textwidth]{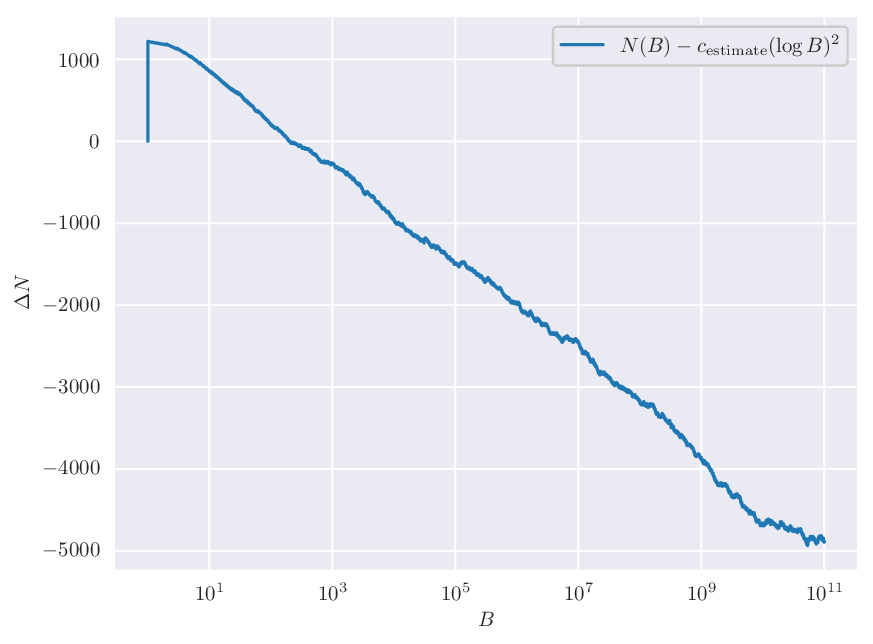}
    \caption{Comparison of $N(B)$ with the  prediction.}\label{fig:i}
  \end{center}
\end{figure}
\begin{figure}
  \begin{center}
    \includegraphics[width = .8\textwidth]{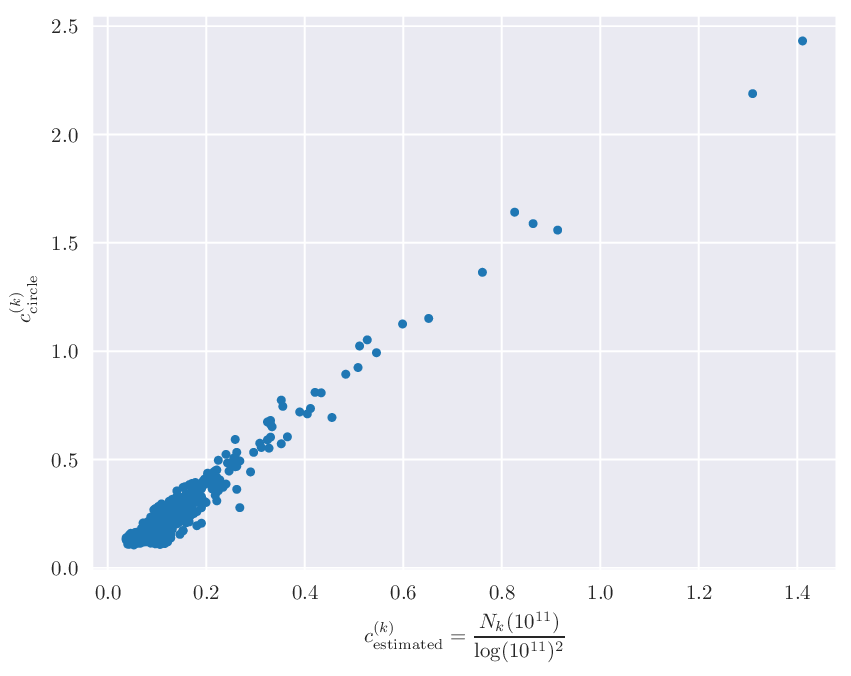}
    \caption{
    A scatter plot comparing 
the predicted leading constants to the heuristic leading constants determined from the data.}\label{fig:iii}
  \end{center}
\end{figure}
We computed integral points of height at most $10^{11}$ on $U_k$ for all square-free integers $k \in [2,1000]$.  
Let
$$
N(B) = \sum_{
\substack{2\le k\le 1000\\ \text{$k$ square-free}}} N_{U_k}^\circ(B).
$$
The sum of the predicted constant over all relevant $k$ is
$$
c_{\text{circle}}=\sum_{
\substack{2\le k\le 1000\\ \text{$k$ square-free}}} 
c_\cir^{(k)} \approx 148.8. 
$$
A linear regression of $\log N(B)$ against $\log \log B$,  as in the previous sections, provides evidence for the exponent $2$ of $\log B$ (Figure \ref{fig:ii}). Based on this, a polynomial regression of degree $2$ suggests a behaviour $N(B) = c_\text{estimate} (\log B)^2 +O(\log B)$, where $c_\text{estimate} = 87$. 
Note that $c_\text{estimate}/c_{\text{circle}} \approx \frac{3}{5}$, but we can offer no 
explanation for this disparity.  This is consistent with taking 
$\gamma_U= \frac{3}{5}$ and
$V=D(\RR)\times 
U(\A^{\mathrm{fin}}_\ZZ)$ in
Heuristic \ref{main-heur}. 
 In Figure~\ref{fig:i} we have plotted the difference $N(B)-c_\text{estimate} (\log B)^2$,  for $B\leq 10^{11}$, which looks convincingly linear in $\log B$.
Finally, in Figure \ref{fig:iii} we have included a scatter plot, in which   
each blue dot represents a surface in the family; on the $x$-axis is an estimated leading constant and on the $y$-axis is the circle method prediction for the leading constant associated to that particular surface.   The correlation is rather good and
and a similar calculation to that recorded at the end of  Section \ref{s:oblong} results in $R^2 = 0.84$.
This further illustrates that 
$\gamma_U\approx  \frac{3}{5}$ is an appropriate value in 
Heuristic \ref{main-heur}.

\subsection{An example with  higher Picard rank}\label{s:outlier}
Finally, we  compare Conjecture \ref{con1} with numerical data for a smooth affine cubic surface of the shape
$$
  (ax+1)(bx+1) + (cy+1)(dy+1) = xyz,
$$
for $a,b,c,d\in \ZZ$. Such a surface $U=U_{a,b,c,d}\subset \AAA^3$ is smooth if $(a-b)(c-d)\ne 0$ and none of $a$, $b$, $c$ or $d$ are $\pm 1$. 
Let $X$ be the completion of $U$ in $\PP^3$, with homogeneous coordinates $t_0$, $x_0$, $y_0$, $z_0$, as in Section~\ref{s:eg2}. The divisor at infinity is again a union of three lines $L_1$, $L_2$, and $L_3$ defined as in \eqref{eq:triangle-names}. In particular, $b=2$ in 
Conjecture \ref{con1}.

Next, we note that 
 the point $Q=(0:0:0:1)$ is an $\singA_2$-singularity. 
Let $\tX$ be a minimal desingularisation. This is  a weak del Pezzo surface of degree $3$ and so it has geometric Picard rank $7$. 
As illustrated in Figure~\ref{fig:pentagon}, 
the triangle at infinity becomes a pentagon on $\tX$, formed by the strict transforms of $L_1$, $L_2$, and $L_3$ (still denoted by the same names) and two $(-2)$-curves $E$. 
The projection away from $Q$ induces a morphism $\tX\to \PP^2$. This morphism is a blow-up of six points, two sets of three on a line, as in Figure~\ref{fig:point-conf}. 
All negative curves are rational, and those making up $\tD$ are linearly independent, whence $\rho_U=2$.  Moreover, this description of $\tX$ as a blow-up  shows that $E_1+E_2+L_1+L_2+L_3$ has anticanonical class in the Picard group, and so $U$ is log K3.
Finally, since the five negative curves making up $\tD$ are linearly independent in $\Pic (\tilde X) = \Pic (\tilde X_{\bar{\QQ}})$, the subvariety $U$ does not have invertible regular functions, whence $\Br (U)/\Br(\QQ) \cong H^1(\QQ,
\Pic(U_\QQbar)) = 0$.

\begin{figure}[t]
  \begin{minipage}{0.39\textwidth}
  \begin{center}
    \begin{tikzpicture}
      \node (P1) at (0,1) [draw, shape=circle, fill=black, scale=.3]{};
      \node (P2) at (-1.5,0) [draw, shape=circle, fill=black, scale=.3]{};
      \node (P5) at (1.5,0) [draw, shape=circle, fill=black, scale=.3]{};
      \node (P3) at (-1,-1.5) [draw, shape=circle, fill=black, scale=.3]{};
      \node (P4) at (1,-1.5) [draw, shape=circle, fill=black, scale=.3]{};

      \draw[thick] (P1) -- (P2) node[pos=0.5, anchor=south east]{$E_1$};
      \draw[thick] (P2) -- (P3) node[pos=0.5, anchor=east]{$L_1$};
      \draw[thick] (P3) -- (P4) node[pos=0.5, anchor=north]{$L_3$};
      \draw[thick] (P4) -- (P5) node[pos=0.5, anchor=west]{$L_2$};
      \draw[thick] (P5) -- (P1) node[pos=0.5, anchor=south west]{$E_2$};
    \end{tikzpicture}
    \caption{Pentagon at infinity.}\label{fig:pentagon}
  \end{center}
\end{minipage}
\hfill
\begin{minipage}{0.6\textwidth}
  \begin{center}
    \begin{tikzpicture}
      \node (P4) at (-2,0) [draw, shape=circle, fill=black, scale=.3]{};
      \node (P5) at (0,0) [draw, shape=circle, fill=black, scale=.3]{};
      \node (P6) at (2,0) [draw, shape=circle, fill=black, scale=.3]{};

      \node (P1) at (-2,1) [draw, shape=circle, fill=black, scale=.3]{};
      \node (P2) at (0,1) [draw, shape=circle, fill=black, scale=.3]{};
      \node (P3) at (2,1) [draw, shape=circle, fill=black, scale=.3]{};

      \node at (P1) [anchor=south west]{$P_1=\pi(L_1)$};
      \node at (P4) [anchor=north west]{$P_4=\pi(L_2)$};

      \draw[thick] (-3,1) -- (3,1) node[pos=0, anchor=east]{$E_1$};
      \draw[thick] (-3,0) -- (3,0) node[pos=0, anchor=east]{$E_2$};
      \draw[dashed] (-2,2) -- (-2,-1) node[pos=1, anchor=east]{$\pi(L_3)$};
    \end{tikzpicture}
    \caption{Configuration of blown up points, with the images of the pentagon at infinity labeled.}\label{fig:point-conf}
  \end{center}
  \end{minipage}
\end{figure}

\begin{figure}
  \begin{center}
    \includegraphics[width = .8\textwidth]{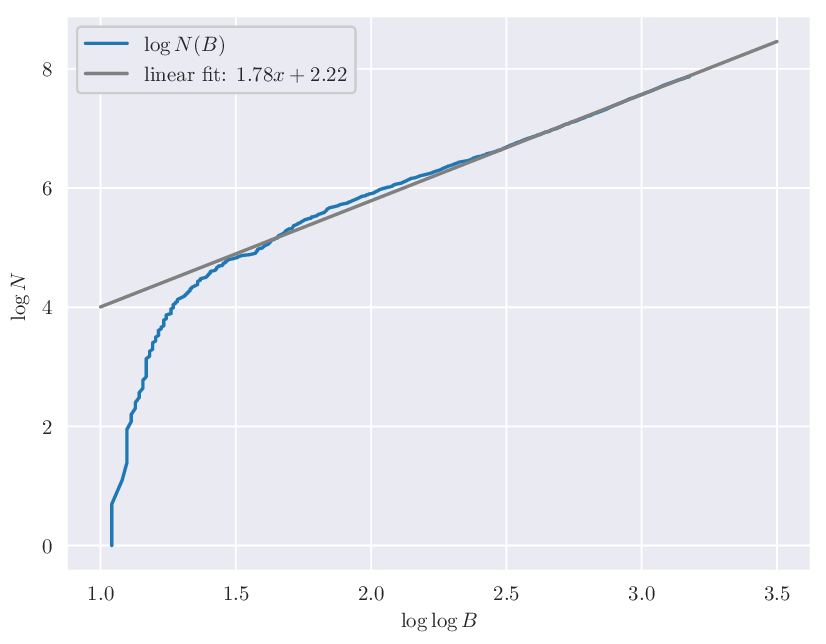}
    \caption{The number of points on $U_{(-2, 3, -3, 5)}$ of height at most $B$ and a linear fit.}\label{fig:harpaz_variant}
  \end{center}
\end{figure}

It follows from Conjecture \ref{con1}
that 
$
N^\circ_U(B)=O\left((\log B)^4\right),
$
and we proceed to investigate numerically the predicted power of $\log B$. While there is an obstruction as in~\cite[Thm.~2.4.1(i)]{wilsch}, it only affects three of the minimal strata of $\tD$, namely those lying above $Q$. Thus this obstruction  does not change the predicted order of growth, but merely  the leading constant.
Let $(a, b, c, d)  = (-2,3,-3,5)$. 
Computing all integral $\bA^1$-curves of degree at most 8, we found curves of degrees $1$, $2$, $3$, and $4$. We computed the set of integral points of height at most $2.5\cdot 10^{10}$ on the surface   and filtered out those on the $\bA^1$-curves that we found. 
We ran a least squares estimate to compare $\log N(B)$ and $\log \log B$, where $N(B)=N^\circ_U(B)$, 
and we plotted the result in  Figure \ref{fig:harpaz_variant}.
This results  in an empirical exponent of $\log B$ of $1.78$, which is much less than 
the prediction of $4$.

\section{Conclusion}\label{s:conclusion}

We end this article by summarising the numerical data that we have gathered. 
All surfaces that we studied are compatible with Conjecture \ref{con1}. 
In fact, apart from the individual surface studied in 
Section \ref{s:outlier}, all of the examples  seem to have an associated counting function 
$N_U^\circ(B)$ that  behaves asymptotically like 
$c (\log B)^{\rho_U+b}$, for suitable $c>0$, 
where $\rho_U$ and $b$ are defined in Conjecture \ref{con1}.  In the case of the singular Markoff surface studied in Section \ref{s:markoff}, this was only true after modifying the heuristic to account for defects of strong approximation: these defects are big enough to not just make the product of $p$-adic densities smaller in size, but they also affect its convergence properties. It would be interesting to know whether a similar phenomenon, or perhaps the presence of large lower order terms,  can explain the 
disparity observed in Section \ref{s:outlier} regarding  the exponent of $\log B$.

Turning to the leading constant $c$, the  results of our investigation are more mixed. While our heuristic specialises to Heath-Brown's  conjecture \cite{33} for sums of three cubes, for which Booker and Sutherland \cite{booker'} have provided evidence on average, 
in Section \ref{s:one_cube} 
we supplied  a  prediction for 
the surface $x^3+y^3+z^3=1$
with less 
compelling numerical data.  On the other hand, 
the  circle method heuristic 
aligned very well with numerical data for the 
the surfaces $x^3+ky^3+kz^3=1$
in Section
\ref{s:oblong}.  Moreover,
in this case,
 we noted that the circle method heuristic is equivalent to 
allowing for an explicit low height rational number $\gamma_U$ in Heuristic \ref{main-heur}.
For the surfaces
$(x^2-ky^2)z=y-1$ in Section \ref{s:final}, we saw that the circle method heuristic only 
agrees with the numerical data when adjoining a suitable $\gamma_U$-factor, as in 
Heuristic~\ref{main-heur}.
While in Section 
\ref{s:final}, this correlation is almost exclusively explained by the dependence of the archimedean volume on the parameter $k$, in Section 
\ref{s:oblong} it is both the Euler product and the archimedean volume that depend highly on the parameter $k$.

Finally, for the Markoff surface and its variants studied in Sections 
\ref{s:eg2} and 
\ref{s:markoff}, the circle method prediction became systematically too small after accounting for failures of strong approximation, and there is no obvious 
choice of $\gamma_U$-factor  explaining the discrepancies. We suspect that the presence of a  group action (generated by the Vieta involutions) makes these surfaces incompatible with the circle method.

In summary, we suspect  that Heuristic \ref{main-heur} is true for most cubic surfaces and that 
there are only finitely possibilities for the $\gamma_U$-factor in the moduli space of all affine cubic 
surfaces over $\QQ$. 
It would be very interesting to find a geometric interpretation for its value.

\end{document}